\newcommand{\imono}[1]{ \;\xymatrix{  \ar@{>->}^{#1}[r] &  \\} }
\newcommand{\iepi}[1]{ \;\xymatrix{  \ar@{->>}^{#1}[r] &  \\} }
\newcommand{\mono}{ \;\xymatrix{  \ar@{>->}[r] &  \\} }
\newcommand{\epi}{ \xymatrix{   \ar@{->>}[r] &  \\} }
\def\rep{\mbox{rep}\,}
\def\add{\mbox{add}\,}
\newcommand{\bD}{\mathbb{D}}
\newcommand{\bE}{\mathbb{E}}
\newcommand{\bI}{\mathbb{I}}
\newcommand{\bV}{\mathbb{V}}
\newcommand{\bW}{\mathbb{W}}
\newcommand{\A}{\mathcal{A}}
\newcommand{\D}{\mathcal{D}}
\newcommand{\E}{\mathcal{E}}
\newcommand{\I}{\mathcal{I}}
\newcommand{\V}{\mathcal{V}}
\newcommand{\W}{\mathcal{W}}
\newcommand{\cF}{\mathcal{F}}
\renewcommand{\S}{\mathcal{S}}
\def\Ex{\mbox{\bf Ex}}
\def\Bf{\mbox{\bf Bf}}
\def\Wex{\mbox{\bf Wex}}
\def\LW{\mbox{\bf LW}}
\def\RW{\mbox{\bf RW}}
\def\Bf{\mbox{\bf BiFun}}
\def\Cbf{\mbox{\bf CBiFun}}
\def\Ab{\mbox{\bf Ab}}
\def\Bim{\mbox{\bf Bim}}
\def\Cbim{\mbox{\bf Cbim}}
\newcommand{\ca}{{\mathcal A}}
\newcommand{\cA}{{\mathcal A}}
\newcommand{\cc}{{\mathcal C}}
\newcommand{\ce}{{\mathcal E}}
\newcommand{\ct}{{\mathcal T}}
\newcommand{\cW}{{\mathcal W}}
\newcommand{\Eff}{\operatorname{\bf{Eff}}}
\newcommand{\coh}{\operatorname{\bf{coh}}}
\newcommand{\defff}{\operatorname{\bf{def}}\nolimits}
\newcommand{\Mod}{\operatorname{\bf{Mod}}\nolimits}
\renewcommand{\mod}{\operatorname{\bf{mod}}\nolimits}
\newcommand{\fp}{\operatorname{\bf{fp}}\nolimits}
\newcommand{\fg}{\operatorname{\bf{fg}}\nolimits}
\theoremstyle{definition} 
\newtheorem{theorem}{Theorem}[section]
\newtheorem{proposition}[theorem]{Proposition}
\newtheorem{lemma}[theorem]{Lemma}
\newtheorem{cor}[theorem]{Corollary}
\newtheorem{remark}[theorem]{Remark}
\newtheorem{example}[theorem]{Example}
\newtheorem{definition}[theorem]{Definition}
\theoremstyle{definition} 
\def\End{\mbox{End}}
\def\Hom{\mbox{Hom}}
\def\Ext{\mbox{Ext}}
\def\Mod{\mbox{Mod\,}}
\def\rep{\mbox{rep}\,}
\def\Ab{\mbox{Ab}}
\def\B{\mbox{B}}
\def\Set{\mbox{Set}}
\title{On the lattices of exact and weakly exact structures}
\author[R.-L. Baillargeon]{Rose-Line Baillargeon}
\author[T. Br\"ustle]{Thomas Br\"ustle}
\author[M. Gorsky]{Mikhail Gorsky}
\author[S. Hassoun]{Souheila Hassoun}
\address{Universit\'e de Sherbrooke, 2500, boul. de l'Universit\'e, Sherbrooke, Qu\'ebec J1K 2R1}
\address{Fakult\"at f\"ur Mathematik, Universit\"at Wien, Oskar-Morgenstern-Platz 1, 1090 Wien, Austria}
\email{Rose-Line.Baillargeon@USherbrooke.ca}
\email{Thomas.Brustle@USherbrooke.ca}
\email{mikhail.gorskii@univie.ac.at}
\email{Souheila.Hassoun@USherbrooke.ca}
\begin{document}
\maketitle

\bigskip

\begin{abstract}
 We initiate in this article the study of \emph{weakly exact structures}, a generalisation of Quillen exact structures. We introduce weak counterparts of one-sided exact structures and show that a left and a right weakly exact structure generate a weakly exact structure. We further define weakly extriangulated structures on an additive category and characterize weakly exact structures among them.\\ 
 We investigate when these structures on $\A$ form lattices. We prove that the lattice of substructures of a weakly extriangulated structure is isomorphic to the lattice of topologizing subcategories of a certain functor category. In the weakly idempotent complete case, we characterise the lattice of all weakly exact structures and we prove the existence of a unique maximal weakly exact structure.\\
 We study in detail the situation when $\A$ is additively finite, giving a module-theoretic characterization of closed sub-bifunctors of $\Ext^1$ among all additive sub-bifunctors.
\end{abstract}

\section{Introduction}
In this work we are studying \emph{all} the additive sub-bifunctors of the first extension functor $Ext$ on an additive category. They are associated to what we introduce as \emph{the weakly exact structures}, which generalize  Quillen's exact structures.

 In the first papers on relative homological algebra, a mix of structures has been considered that correspond to an exact structure:  on one hand classes of morphisms satisfying certain properties (``h.f.class''), on the other hand certain (``closed'') subfunctors of $\Ext$. The authors considered also a weaker notion, an {\em f.class}, which omits the condition on admissible monics and epics to be closed under composition.
 This weaker notion corresponds to an additive subfunctor of Ext. It has been studied more recently in the work of Fu, Guil Asensio, Herzog and Torrecillas \cite{FGHT}, which extended the theory of approximation in the relative homological algebra to the setup of morphisms rather than objects. They  demonstrated the need to study the more general notion of {\em f.class} by considering examples such as the Auslander–Reiten phantom morphisms.
\cite{FGHT} work in the context of a given exact category $(\A,\E)$, and consider links between ideals of morphisms and additive sub-bifunctors of the extension functor $Ext_{\E}$ associated to $\E.$ 
 This work has been further extended by Breaz and Modoi \cite{BM} to the context of extension-closed subcategories $\mathcal{A}$ of a triangulated category $\mathcal{T}$ and restrictions of sub-bifunctors of $\mathcal{T}(-, -[1])$ on $\mathcal{A}.$
 \\
 
 The ``stand alone'' concept of an exact structure as a class of short exact sequences in an additive category $\A$ satisfying certain axioms has been laid out by Quillen in \cite{Qu73}. A more concise version of these axioms was formulated by Keller in \cite{Ke}, see also \cite{GR}. 
The comparison to sub-bifunctors of $\Ext^1$ has been re-considered in \cite{AS1} and then in \cite{DRSS}, with applications to exact structures originating from one-point extensions, a special case of exact structures associated with bimodule problems in \cite{BrHi}. However, the lack of a unique maximum extension-functor for arbitrary additive categories was a limiting factor in these studies. 
Only later Rump \cite{Ru11} showed that any additive category admits a unique maximal exact structure $\E_{\max}$. 
In \cite{BHLR} a study of the family of all exact structures $\Ex(\A)$ on an additive category $\A$ was initiated. The existence of a unique maximum exact structure allows to turn $\Ex(\A)$ into a complete bounded lattice. On the side of bifunctors, this amounts to studying all closed sub-bifunctors of a unique maximum bifunctor $\bE_{\max}$ which corresponds to the exact structure $\E_{\max}$. It is natural, on the bifunctor side, to extend the study to {\em all} additive sub-bifunctors, which in turn raises the question to which structure of exact sequences they correspond.\\

In this work we introduce the notion of a  {\em weakly exact structure} on an additive category $\A$. It provides a conceptualization of the notion of f.classes studied in \cite{Buch} and the notion of additive subfunctors of an extension functor $\E$ studied in \cite{AS1, DRSS, FGHT}.
We establish the existence of a unique maximal weakly
exact structure provided the additive category $\A$ is weakly idempotent complete. This in turn allows to
show that all the weakly exact structures on $\A$ form a lattice. 

We introduce in Section \ref{section:exact structures} the class $\Wex(\A)$ of all weakly exact structures on an additive category $\A$.
It turns out that, despite the fact that the classes of admissible monics and admissible epics of weakly exact structures are not necessarily closed under compositions, some of the properties of exact structures are still valid, in particular, every weakly exact structure satisfies Quillen's obscure axiom, see Proposition \ref{prop:obscure-axiom}.
Similar to exact structures, it is sometimes beneficial to dissect the set of axioms into two parts, leading to the notion of left and right weakly exact structures. We show that any pair of a left and a right weakly exact structures gives rise to a weakly exact structure, and that all such structures arise in that way.
\medskip

It is known and proved in \cite{Cr11}, that the stable exact structure $\E_{sta}$ forms the maximal exact structure on any weakly idempotent complete category. 
In this work we apply these results to deduce that a unique maximal weakly exact structure exists on any weakly idempotent complete category, and is given by the stable short exact sequences. 
We also consider the interval $\Wex(\E_{max}):=[\E_{min},\E_{max}] \subseteq \Wex(\A)$ and we study the weakly exact structures that are included in the unique maximal exact structure $\E_{max}$.
Given a weakly exact structure $\W$ on $\A$, constructing the group $\bW$ of $\W-$extensions 
yields a map $\Phi$ 
to category of bifunctors from $\A$ to abelian groups:
\begin{align*}
\Phi : \Wex(\A) & \longrightarrow \Bf(\A)\\ \W & \longmapsto \bW=\Ext^1_{\W}(-,-).
\end{align*}

This function $\Phi$ induces lattice isomorphisms
\[\begin{array}{ccc}
 \Wex(\E_{max}) & \longleftrightarrow & \Bf(\bE_{max}) \\
 \cup &   & \cup\\
\Ex(\A) & \longleftrightarrow & \Cbf(\A)
\end{array}\]
where $\Cbf(\A)$ denotes the subclass of closed sub-bifunctors of $\bE_{max}$.\\
Note that $\Ex(\A)$ is {\em not} a {\em sublattice} of $\Wex(\E_{max})$, even if it is a subposet: the join operations we consider on these sets are different, as we illustrate by an example in Section \ref{section:example}.
\bigskip

When the underlying category $\A$ is additively finite and Krull-Schmidt, it is known that the lattice $\Ex(\A)$ is boolean, with each object $\bE(S)$ determined by the choice of a set $S$ of Auslander-Reiten sequences.
The larger lattice $\Wex(\A)$ however is not boolean, and it is interesting to characterise the members of $\Ex(\A)$ in module-theoretic terms, that is, describe the closed sub-bimodules of $\bE_{max}$. We show that, when viewed as bimodules over the Auslander algebra of $\A$, elements in $\Ex(\A)$ can be characterized as follows: 
For every set  $S$ of Auslander-Reiten sequences, the closed bimodule $\bE(S)$ of $\bE_{max}$ introduced above  is the maximal submodule of $\bE_{max}$ whose socle is $S$.\\

In order to find a general and simultaneous way to give proofs of various statements concerning exact and triangulated categories at the same time, Nakaoka and Palu \cite{NakaokaPalu1} studied additive bifunctors $\ce: \ca^{op} \times \ca \to \Ab$ equipped with certain extra data called a \emph{realization}. 
They found a set of axioms on triples consisting of an additive category, a bifunctor and a realization that unifies the axioms of exact and of triangulated categories. They called such structures \emph{extriangulated}. Extensions in exact categories are realized by ``admissible'' kernel-cokernel pairs. In an extriangulated category this role is played by pairs of composable morphisms $f, g$ where $f$ is a \emph{weak kernel} of $g$ and $g$ is a \emph{weak cokernel} of $f$.
Moreover, Nakaoka and Palu characterized all triples that define exact structures, in other words, closed additive sub-bifunctors of $\mathbb{E}_{max}$.  Hershend, Liu and Nakaoka \cite{HLN} introduced \emph{$n-$exangulated} structures and proved that the choice of a $1-$exangulated structure on an additive category is equivalent to the choice of an extriangulated structure. The set of axioms of $1-$exangulated structures is slightly different from that of extriangulated categories. In Section 5, we consider $1-$exangulated categories with one of the axioms removed.
We prove that such \emph{weakly $1-$exangulated}, or \emph{weakly extriangulated} structures naturally generalize weakly exact structures we defined earlier. We also show that \emph{almost exact structures} on extension closed subcategories of triangulated categories, which were considered by Breaz and Modoi in \cite{BM}, are weakly extriangulated. 

For a finite-dimensional algebra $\Lambda,$ Buan \cite{Buan} studied closed sub-bifunctors of the bifunctor $\Ext^1_{\Lambda}$ on the category $\mod \Lambda$. He proved that they correspond to certain Serre subcategories of the category of finitely presented additive functors $(\mod \Lambda)^{op} \to \Ab$ (i.e. of \emph{finitely presented} modules over $\mod \Lambda$), defined as categories of \emph{contravariant defects} in works of Auslander. This result  was later extended to exact structures on additive categories in \cite{En18}, see also \cite{FG}. 
We note that the definition of contravariant defects naturally extends to the setting of weakly exact structures. Ogawa \cite{Ogawa} defined contravariant defects in the setting of extriangulated categories, and we further extend this notion to the framework of weakly extriangulated categories. By adapting arguments of Ogawa and Enomoto \cite{Enomoto3}, we prove that the category of defects of a weakly extriangulated structure on an additive category $\ca$ is \emph{topologizing} (in the sense of Rosenberg \cite{Rosenberg}) in the category $\coh(\ca)$ of coherent right $\ca-$modules. That means that it is closed under subquotients and finite coproducts. 

Given a weakly extriangulated structure, all its substructures are uniquely characterized by their categories of defects, and each topologizing subcategory of a given category of defects defines a weakly extriangulated substructure. Weakly extriangulated substructures of a weakly exact structure are necessarily weakly exact. Thus, whenever we know that an additive category $\ca$ admits a unique maximal weakly exact structure, we can classify all weakly exact structures on $\ca$ in terms of topologizing structures in a certain abelian category. As explained above, this covers all weakly idempotent complete additive categories.

Topologizing subcategories of an abelian category form a lattice. Topologizing subcategories of the (not necessarily abelian) category of defects of a weakly extriangulated structure on $\ca$ also form a lattice, which is an interval in the lattice of all topologizing subcategories of $\coh(\ca)$. Note that Serre subcategories form a subposet, but not a sublattice of this lattice. Weakly extriangulated substructrures of a weakly extriangulated structure also form a natural lattice, extending the lattice of weakly exact structues. We establish, in the last section of this work, lattice isomorphisms between these  lattices.\\

\section{Acknowledgements}
The authors would like to thank Charles Paquette for his thoughtful comments on the PhD thesis of the fourth author. The authors would also like to thanks Shiping Liu and Hiroyuki Nakaoka for helpful discussions contributing to this version of the work.\\ Most of this work was done while the first author was supported by an NSERC USRA grant. The second author was supported by Bishop's University and NSERC of Canada, and the fourth author acknowledges support from the "thésards étoiles" scholarship of ISM for outstanding PhD candidates. This work was completed during the third author's participation at the Junior Trimester Program "New Trends in Representation Theory" at the Hausdorff Institute for Mathematics in Bonn. He is very grateful to the Institute for the perfect working conditions. This work is a part of a project that has received funding from the European Research Council (ERC) under the European Union’s Horizon 2020 research and innovation programme (grant agreement No. 101001159).
\\
This article is an abridged version of the more detailed version available on arXiv \cite{BBGH}.

\section{Weakly exact and exact structures}\label{section:exact structures}

\subsection{Exact structures}


We fix an additive category $\mathcal{A}$ throughout this section. The notion of short exact sequence is specified to be  a  kernel-cokernel pair $(i, d)$, that is, a pair of composable morphims such that $i$ is kernel of $d$ and $d$ is cokernel of $i$.
An exact structure on $\A$ is then given by a class $\mathcal{E}$ of kernel-cokernel pairs on $\mathcal{A}$ satisfying certain axioms which we recall below. We call {\em admissible monic} a morphism $i$ for which there exists a morphism $d$ such that $(i,d) \in \mathcal{E}$. An {\em admissible epic} is defined dually. Note that admissible monics, admissible epics and admissible short exact squences are referred to as inflations, deflations and conflations in \cite{GR}, respectively. 
We depict an admissible monic by  $\mono$
and an admissible epic by $\epi$. The pair $(i,d) \in \mathcal{E}$ is referred to as {\em admissible short exact sequence}, or {\em short exact sequence in $\E$.} 

\begin{definition}\label{Quillen def}
An {\em exact structure} $\mathcal{E}$ on $\A$ is a class of kernel-cokernel pairs $(i, d)$ in $\A$ which is closed under isomorphisms and satisfies the following axioms:

\begin{itemize}
\item[(E0)] For all objects  $A$ in $\mathcal{A}$ the identity $1_A$ is an admissible monic;

\item[(E0)$^{op}$] For all objects  $A$ in $\mathcal{A}$ the identity $1_A$ is an admissible epic; 

\item[(E1)]  The class of admissible monics is closed under composition

\item[(E1)$^{op}$] The class of admissible epics is closed under composition;
\item[(E2)] 
 The push-out of an admissible monic $i: A \mono B$ along an arbitrary morphism $t: A \to C$ exists and yields an admissible monic $s_C$:
$$\xymatrix{
A \; \ar[d]_{t} 
\ar@{>->}[r]^{i}  \ar@{}[dr]|{\text{PO}} 
& B\ar[d]^{s_B}\\
C \; \ar@{>->}[r]_{s_C} & S.}$$

\item[(E2)$^{op}$]  
The pull-back of an admissible epic $h$ along an arbitrary morphism $t$ exists and yields an admissible epic $p_B$
$$\xymatrix{
P \; \ar[d]_{p_A} 
\ar@{ ->>}[r]^{p_B} \ar@{}[dr]|{\text{PB}}  & B\ar[d]^{t}\\
A \; \ar@{->>}[r]_{h} & C.}$$
\end{itemize}

An {\em exact category} is a pair $(\mathcal{A}, \mathcal{E})$ consisting of an additive category $\mathcal{A}$ and an exact structure $\mathcal{E}$ on $\mathcal{A}$. 
\end{definition}

We denote by $(\Ex({\A}), \subseteq)$ the poset of exact structures $\E$ on $\A$, where the partial order is given by containment $\E' \subseteq \E$. Note that $\Ex({\A})$ need not actually form a set, 
but by abuse of language, we still use the term poset when $\Ex({\A})$ is a class. 
The poset $(\Ex({\A}), \subseteq)$ always contains a unique minimal element, the {\em split exact structure} $\E_{min}$ which is formed by all split exact sequences, that is,  sequences isomorphic to  (see \cite[Lemma 2.7]{Bu}):
\[\xymatrix{
A \, \ar@{ >->}[r]^{\footnotesize\begin{bmatrix}
   1 \\
    0 \\
\end{bmatrix}
} & A\oplus B \ar@{ ->>}[r]^{[0 \; 1]} & B \\
}
\]
Moreover, every additive category admits a unique maximal exact structure $\E_{max}$, see
\cite[Corollary 2]{Ru11}.
When the category $\A$ is abelian, then $\E_{max}$ is formed by all short exact sequences in $\A$. The construction is more subtle for other classes of additive categories, we refer to \cite[Section 2.4]{BHLR} for a more detailed discussion.

\subsection{Example}\label{A3}
Consider the category $\A = \rep Q$ of representations of  the quiver
\[ Q : \qquad \xymatrix{ 1 \ar[r]  & 2   & 3 \ar[l]}\]
Then the Hasse diagram of the poset of exact structures $\Ex(\A)$ has the shape of a \emph{cube} (see \cite[Example 4.2]{BHLR} for detailed description of the different exact structures on $\A$):

\begin{center}
\begin{tikzpicture}[scale=3]

 \node (a) at (0,0,0) {$\E_{min}$};

 \node (b) at (1,0,0) {$\E_{1}$};
 \node (c) at (1,1,0) {$\E_{1,3,5}$};
 \node (d) at (0,1,0) {$\E_{3}$};

 \node (e) at (0,0,1) {$\E_{2}$};
 \node (f) at (1,0,1) {$\E_{1,2}$};
 \node (g) at (1,1,1) {$\E_{max}$};
 \node (h) at (0,1,1) {$\E_{2,3,4}$};
 
 \draw [-latex] (a) -- (e);
 \draw [-latex] (e) -- (h);
 \draw [-latex] (h) -- (g);
 
 \draw [-latex] (a) -- (b);
 \draw [-latex] (b) -- (c);
 \draw [-latex] (c) -- (g);
 
 \draw [-latex] (a) -- (d);
 \draw [-latex] (d) -- (h);
 \draw [-latex] (d) -- (c);
 
 \draw [-latex] (b) -- (f);
 \draw [-latex] (e) -- (f);
 \draw [-latex] (f) -- (g);
\end{tikzpicture}
\end{center}

\subsection{Weakly exact structures}

\begin{definition}\label{set of weakly exact}
Let $\A$ be an additive category. We define a {\em weakly exact structure} $\W$ on $\A$ as a class of kernel-cokernel pairs $(i, d)$ in $\A$ which is closed under isomorphisms and direct sums, and satisfies the axioms $(E0)$, $(E0)^{op}$,$(E2)$ and $(E2)^{op}$ of Definition \ref{Quillen def}.
\end{definition}

This definition provides of the conceptualization  of  subfunctors of Ext as studied in \cite{FGHT} in the context of exact categories. We denote by  $(\Wex(\A), \subseteq)$ the poset of all weakly exact structures on $\A$, ordered by containment.

\begin{lemma}
$\Ex(\A)$ is a subclass of $\Wex(\A)$. 
\end{lemma}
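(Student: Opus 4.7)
The plan is straightforward: compare the two axiom lists and show that the only condition appearing in the definition of a weakly exact structure but not literally in the definition of an exact structure, namely closure under direct sums, is automatically satisfied by any exact structure. All other conditions, closure under isomorphisms together with (E0), (E0)$^{op}$, (E2), (E2)$^{op}$, are explicitly built into Definition \ref{Quillen def}, so there is nothing to prove for them.

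Let $\E \in \Ex(\A)$ and let $(i_1, d_1)$ and $(i_2, d_2)$ be two admissible short exact sequences in $\E$, say
\[
A_1 \imono{i_1} B_1 \iepi{d_1} C_1, \qquad A_2 \imono{i_2} B_2 \iepi{d_2} C_2.
\]
I want to show that $(i_1 \oplus i_2, d_1 \oplus d_2)$ is an admissible short exact sequence, i.e.\ lies in $\E$. First I would check that $i_1 \oplus \operatorname{id}_{A_2}$ is an admissible monic by exhibiting it as the pushout of $i_1$ along the canonical map $\binom{\operatorname{id}_{A_1}}{0} \colon A_1 \to A_1 \oplus A_2$: a direct verification shows that this pushout is $B_1 \oplus A_2$ with the induced map being $i_1 \oplus \operatorname{id}_{A_2}$, so (E2) gives admissibility. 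Symmetrically $\operatorname{id}_{B_1} \oplus i_2$ is admissible. Now (E1) applies to the factorisation
\[
i_1 \oplus i_2 \; = \; (i_1 \oplus \operatorname{id}_{B_2}) \circ (\operatorname{id}_{A_1} \oplus i_2),
\]
showing that $i_1 \oplus i_2$ is an admissible monic. The dual argument, using (E2)$^{op}$ and (E1)$^{op}$, shows that $d_1 \oplus d_2$ is an admissible epic. Since $(i_1 \oplus i_2, d_1 \oplus d_2)$ is evidently a kernel-cokernel pair (kernels and cokernels commute with finite direct sums in any additive category), it must lie in $\E$.

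Consequently $\E$ is closed under isomorphisms and direct sums and satisfies (E0), (E0)$^{op}$, (E2), (E2)$^{op}$, hence belongs to $\Wex(\A)$. I do not expect any real obstacle here; the only point that requires thought is the direct-sum closure, and this is the classical fact (cf.\ \cite[Prop.~2.9]{Bu}) recalled above.
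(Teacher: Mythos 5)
Your proposal is correct and matches the paper's argument: the paper likewise observes that only closure under direct sums needs checking and then simply cites \cite[Proposition 2.9]{Bu}, whose proof (pushout along $\begin{bmatrix} 1 \\ 0 \end{bmatrix}$ to get $i_1\oplus \mathrm{id}$, then compose via (E1)) is exactly what you reproduce. The only cosmetic slip is the mismatch between the object named in ``$\operatorname{id}_{B_1}\oplus i_2$'' and the factor $\operatorname{id}_{A_1}\oplus i_2$ actually used in your factorisation, which does not affect the argument.
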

\begin{proof}
Only the direct sum condition needs to be verified. But this is always satisfied for exact structures, by \cite[Proposition 2.9]{Bu}.
\end{proof}

We now state some of the properties for exact structures that also hold for weakly exact structures, see \cite[Proposition 2.12]{Bu}:
\begin{lemma}\label{prop-2.12}
Let $\W$ be a weakly exact structure and let $i$ and $i'$ be admissible monics of $\W$ forming the rows of a commutative square:
 \[
    \xymatrix{
    A  \ar@{ >->}[r]^{i} \ar@{->}[d]^{f}
    & B \ar@{->}[d]^{f'}    \\
    A'  \ar@{ >->}[r]^{i'} & B'   }
    \]
Then the following statements are equivalent:\\
(i) The square is a push-out. \\
(ii) $\xymatrix{
A \, \ar@{ >->}[r]^{\footnotesize\begin{bmatrix}
   i \\
    -f \\
\end{bmatrix}
} & B\oplus A' \ar@{ ->>}[r]^{\; \; \;\;  [f' \; i']} & B' \\
}
$ is a short exact sequence belonging to $\W$.\\
(iii) The square is both a push-out and a pull-back.\\
(iv) There exists a commutative diagram with rows being admissible short exact sequences in $\W$:
 \[
    \xymatrix{
    A \; \ar@{>->}[r]^{i} \ar@{->}[d]^{f}
    & B \ar@{->}[d]^{f'}  \ar@{->>}[r]^{p} & C \ar@{->}[d]^{1_{C}}  \\
    A' \ar@{ >->}[r]^{i'} & B' \ar@{->>}[r]^{p'} & C }
\]
\end{lemma}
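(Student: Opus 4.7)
My strategy is to close the cycle (iii) $\Rightarrow$ (i) $\Rightarrow$ (iv) $\Rightarrow$ (ii) $\Rightarrow$ (iii); the implication (iii) $\Rightarrow$ (i) is immediate. Throughout, the guiding constraint is that for a weakly exact structure the axiom (E1) is unavailable, so the standard argument of B\"uhler \cite[Prop.~2.12]{Bu} must be rewritten to use only (E2), (E2)$^{op}$, and the closure under direct sums and isomorphisms from Definition \ref{set of weakly exact}.

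For (i) $\Rightarrow$ (iv): applying (E2) to $i$ along $f$, uniqueness of pushouts guarantees that $i'$ is admissible, so there is a conflation $(i', d') \in \W$ for some cokernel $d' \colon B' \to D$. The universal property of the pushout yields a unique $p' \colon B' \to C$ with $p' f' = p$ and $p' i' = 0$. From $p' i' = 0$ we get a unique factorization $p' = \varphi \, d'$, and from $d' f' i = d' i' f = 0$ together with the cokernel property of $p$ we get $\psi \colon C \to D$ with $d' f' = \psi \, p$. A diagram chase using that $p$ and $d'$ are epic and the uniqueness clause of the pushout universal property produces $\varphi \psi = 1_C$ and $\psi \varphi = 1_D$. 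Hence $(i', d') \cong (i', p')$, the latter belongs to $\W$ by closure under isomorphism, and this supplies the diagram of (iv).

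For (iv) $\Rightarrow$ (ii): form the pullback of $p \colon B \twoheadrightarrow C$ along $p' \colon B' \to C$. By (E2)$^{op}$ this gives a conflation $A \rightarrowtail X \twoheadrightarrow B'$ in $\W$, where $X = \{(b, b') \in B \oplus B' \mid p(b) = p'(b')\}$ and the kernel is identified with $A$ via $a \mapsto (i(a), 0)$. The map $\Phi \colon B \oplus A' \to X$, $(b, a') \mapsto (b, f'(b) + i'(a'))$, is an isomorphism: an inverse is well-defined because $p' f' = p$ forces $b' - f'(b) \in \operatorname{im}(i')$ for $(b, b') \in X$, and uniqueness of the preimage follows from $i'$ being monic. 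Transporting the conflation along $\Phi^{-1}$ identifies the admissible epic with $[f' \; i']$ and the admissible monic with $\begin{bmatrix} i \\ -f \end{bmatrix}$, which is precisely (ii).

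Finally, (ii) $\Rightarrow$ (iii) is formal: once $\begin{bmatrix} i \\ -f \end{bmatrix}$ and $[f' \; i']$ are mutually kernel and cokernel, the universal properties of kernels and cokernels translate directly into the pushout and pullback properties of the square by composing a test cone with the matrix $[g \; g']$ or its dual. The most delicate step I anticipate is the mutual inverse calculation in (i) $\Rightarrow$ (iv), where one must carefully balance the universal property of the pushout against that of the cokernel of $i'$, since the alternative route via composition of admissible monics is not at our disposal.
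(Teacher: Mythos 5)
Your proof is correct and takes essentially the same route as the paper, which simply observes that B\"uhler's proof of \cite[Proposition 2.12]{Bu} uses neither (E1) nor (E1)$^{op}$ when run in the cyclic order (i) $\Rightarrow$ (iv) $\Rightarrow$ (ii) $\Rightarrow$ (iii) $\Rightarrow$ (i) --- exactly the cycle you close, with each implication fleshed out as in B\"uhler. The only presentational caveat is the element-wise description of the pullback $X$ in your step (iv) $\Rightarrow$ (ii), which in a general additive category should be read as shorthand for the corresponding universal-property argument (factoring $q_{B'}-f'q_B$ through $i'=\ker(p')$ to build the inverse of $\Phi$).
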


Commutative squares that are both a pushout and a pullback are called \emph{bicartesian} squares.

\begin{lemma} \label{factorization}
(Compare \cite[Lemma 3.1]{Bu}) Let $\cW$ be a weakly exact structure on $\ca.$ For any morphism of admissible short exact sequences
\[\xymatrix@R=0.6cm{*+++{A} \ar@{>->}[d] \ar@{->}[r] &*+++{B} \ar@{->>}[r] \ar@{->}[d] &*+++{C} \ar@{->}[d]\\*+++{A'} \ar@{>->}[r] &*+++{B'}  \ar@{->>}[r] &*+++{C'}}\]
in $\cW$, there exists a commutative diagram
\[\xymatrix@R=0.6cm{*+++{A} \ar@{->}[d] \ar@{>->}[r] &*+++{B} \ar@{->>}[r] \ar@{->}[d]&*+++{C} \ar@{=}[d]\\*+++{A'} \ar@{>->}[r]  \ar@{=}[d] &*+++{E} \ar@{->}[d] \ar@{->>}[r] &*+++{C} \ar@{->}[d]\\*+++{A'} \ar@{>->}[r] &*+++{B'}  \ar@{->>}[r] &*+++{C',}}\]
where the middle row is also an admissible short exact sequence in $\cW$ and the top left and bottom right squares are bicartesian.
\end{lemma}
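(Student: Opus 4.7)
The plan is to construct the middle row as the pushout of the left column along the top admissible monic, and then use Lemma~\ref{prop-2.12} together with its dual (both of which remain valid for weakly exact structures, as the authors have just observed) to identify the two bicartesian squares and the admissibility of the middle row.

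First, I would form the pushout of the admissible monic $A \rightarrowtail B$ along the morphism $A \to A'$. Axiom (E2) guarantees the existence of this pushout and produces an admissible monic $A' \rightarrowtail E$, while the implication (i)$\Rightarrow$(iv) of Lemma~\ref{prop-2.12} identifies a common cokernel $E \twoheadrightarrow C$. This simultaneously produces the admissible short exact sequence $A' \rightarrowtail E \twoheadrightarrow C$ in $\cW$ serving as the middle row and exhibits the top-left square as bicartesian. The universal property of the pushout, applied to the compatible pair of morphisms $B \to B'$ and $A' \to B'$ (compatible because the original outer square commutes), then yields a unique morphism $\phi\colon E \to B'$ such that $\phi$ restricts to $A' \to B'$ along $A' \to E$ and to $B \to B'$ along $B \to E$. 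This makes the bottom-left square of the enlarged diagram commute.

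Second, to verify that the resulting bottom-right square commutes, that is, $h \circ (E \twoheadrightarrow C) = (B' \twoheadrightarrow C') \circ \phi$, I would appeal once more to the universal property of the pushout: it suffices to check equality of these two morphisms $E \to C'$ after precomposition with the structure maps $A' \to E$ and $B \to E$. After precomposition with $A' \to E$, both compositions vanish since $A' \rightarrowtail E \twoheadrightarrow C = 0$ and $A' \rightarrowtail B' \twoheadrightarrow C' = 0$; after precomposition with $B \to E$, both equal the morphism $B \to C \xrightarrow{h} C'$, which coincides with $B \to B' \twoheadrightarrow C'$ by commutativity of the original diagram.

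Third, to upgrade this commuting bottom-right square to a bicartesian one, I would invoke the dual of Lemma~\ref{prop-2.12} applied to the pair consisting of the middle and bottom rows. These rows fit into a commutative diagram whose rows are admissible short exact sequences in $\cW$ and whose leftmost column is the identity on $A'$; by the dual of the implication (iv)$\Rightarrow$(iii), the right-hand square is then simultaneously a pullback and a pushout, as required. The main obstacle throughout is the unavailability of the composition axioms (E1) and (E1)$^{op}$, which traditionally underpin such factorization arguments in exact categories; this is circumvented precisely by replacing any composition-based reasoning with the pushout/pullback characterizations provided by Lemma~\ref{prop-2.12} and its dual.
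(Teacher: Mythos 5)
Your argument is correct and is essentially the paper's own proof: the paper simply defers to B\"uhler's Lemma 3.1, whose proof is exactly the pushout construction you describe (form the pushout of $A\rightarrowtail B$ along $A\to A'$, apply Lemma \ref{prop-2.12} (i)$\Rightarrow$(iv) for the middle row and the top-left bicartesian square, obtain $E\to B'$ from the pushout's universal property, and invoke the dual of Lemma \ref{prop-2.12} for the bottom-right square). You have usefully made explicit why no appeal to (E1) or (E1)$^{op}$ is needed, which is precisely the point of the paper's one-line citation.
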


\color{black}
In \cite[Lemma 5]{FGHT}, a weaker version of Quillen's obscure axiom is established in the context of weakly exact structures. In fact, the full version is valid in this context, as can be shown using \cite[Proposition 2.16]{Bu} :
\begin{proposition} \label{prop:obscure-axiom}
{\bf (Quillen's obscure axiom for weakly exact structures)} 
Let $\W$ be a weakly exact structure on an additive category $\A$.
\begin{enumerate}
    \item Consider morphisms $\xymatrix{ A \ar@{ ->}[r]^{i} & B \ar@{->}[r]^{j} & C  }$ in $\A$, where $i$ has a cokernel and $ji$ is an admissible monic of $\W$. Then $i$ is also an admissible monic of $\W$. 
    \item Consider morphisms $\xymatrix{ X \ar@{ ->}[r]^{f} & Y \ar@{->}[r]^{g} & Z  }$  in $\A$, where $g$ has a kernel and $gf$ is an admissible epic of $\W$. Then $g$ is also an admissible epic of $\W$.
\end{enumerate}
\end{proposition}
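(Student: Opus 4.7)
The plan is to prove (1) by constructing, via a pullback, an admissible short exact sequence $(\iota, \sigma) \in \W$ of the form $A \to P \to D$ and then showing that the pair $(i, p)$, where $p \colon B \to D$ is the cokernel of $i$, is isomorphic to it; closure of $\W$ under isomorphism then places $(i, p)$ in $\W$ and identifies $i$ as admissible monic. Statement (2) follows by duality in $\A^{op}$.

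Concretely, let $q \colon C \to E$ be the cokernel of the admissible monic $ji$, so the sequence $A \to C \to E$ given by $(ji, q)$ belongs to $\W$. Since $q(ji) = 0$ and $p = \coker(i)$, there is a unique $\bar q \colon D \to E$ with $\bar q p = qj$. Forming the pullback of the admissible epic $q$ along $\bar q$ and invoking axiom $(E2)^{op}$ produces an admissible epic $\sigma \colon P \to D$; its kernel coincides with $\ker q = A$, yielding an admissible short exact sequence $(\iota, \sigma) \colon A \to P \to D$ in $\W$ with $\pi \iota = ji$ for the other pullback projection $\pi \colon P \to C$. The universal property of the pullback then furnishes a unique $\varphi \colon B \to P$ with $\pi \varphi = j$ and $\sigma \varphi = p$, and a direct check gives $\varphi i = \iota$, producing the commutative diagram
\[
\xymatrix{
A \ar@{=}[d] \ar[r]^i & B \ar[d]^\varphi \ar[r]^p & D \ar@{=}[d] \\
A \ar@{>->}[r]_\iota & P \ar@{->>}[r]_\sigma & D
}
\]
whose bottom row is a conflation in $\W$.

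The entire argument thus reduces to showing that $\varphi$ is an isomorphism. Epicity is immediate from the universal properties: any $h \colon P \to Y$ with $h\varphi = 0$ satisfies $h\iota = h\varphi i = 0$, so $h = \tilde h\sigma$ factors through the cokernel $\sigma$ of $\iota$; then $\tilde h p = \tilde h \sigma \varphi = 0$ and the fact that $p$ is epic as a cokernel force $\tilde h = 0$, hence $h = 0$. Showing that $\varphi$ is also monic is the main obstacle, since in the weakly exact setting we cannot invoke the composition-based short-five-lemma arguments familiar from exact categories. My plan here is to exploit Lemma \ref{prop-2.12} together with the kernel-cokernel structure of the bottom conflation: from $pi = 0$ one obtains a factorization $i = k\alpha$ through the kernel $k \colon \ker p \to B$, and since $\varphi k$ is killed by $\sigma$ it factors uniquely as $\varphi k = \iota\beta$ for some $\beta \colon \ker p \to A$; comparing with $\varphi i = \iota$ and using that $\iota$ is monic yields $\beta\alpha = 1_A$. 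The delicate remaining point is to show that $\alpha$ is an isomorphism, equivalently that the complementary idempotent $1_{\ker p} - \alpha\beta$ on $\ker p$ vanishes; I expect this to follow from the rigidity of the pullback square constructed above together with the monicity of $ji = jk\alpha$ (which controls the composition $jk = ji \cdot \beta$), packaged through the bicartesian equivalences of Lemma \ref{prop-2.12}.

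Once $\varphi$ is established as both monic and epic, its compatibility with the conflation $(\iota, \sigma)$ via identities on $A$ and $D$ promotes it to an isomorphism, displaying $(i, p)$ as isomorphic to $(\iota, \sigma)$; closure of $\W$ under isomorphism yields $(i, p) \in \W$, so $i$ is admissible monic. Statement (2) is obtained by applying (1) to the opposite category $\A^{op}$ equipped with $\W^{op}$.
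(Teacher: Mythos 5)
Your construction of the conflation $(\iota,\sigma)\colon A\to P\to D$ and of the comparison map $\varphi\colon B\to P$ is correct, as is the epicity argument, but the proof does not close and the missing piece is precisely the hard part. Three concrete problems. First, the monicity argument starts by factoring $i$ through ``the kernel $k\colon \ker p\to B$'' of $p=\coker(i)$; in a general additive category a cokernel need not admit a kernel, and the existence of $\ker p$ is essentially equivalent to the statement being proved (once $i$ is known to be an admissible monic, $p$ is an admissible epic and hence has a kernel). Second, the decisive step --- vanishing of the idempotent $1_{\ker p}-\alpha\beta$ --- is only announced (``I expect this to follow from\dots''), not proved; the natural tool for it, the short five lemma one would extract from Lemma \ref{factorization}, needs the top row $(i,p)$ to already be a conflation of $\W$, which is the conclusion, so the obvious way to fill this gap is circular. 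Third, even granting monicity, ``monic and epic'' does not imply ``isomorphism'' in an additive category (multiplication by $2$ on $\bZ$ in the category of finitely generated free abelian groups is monic and epic), and ``compatibility with the conflation promotes it to an isomorphism'' is not an argument. The correct logical order is the reverse of yours: one first proves that $i$ is an admissible monic and only then deduces, via the five lemma, that $\varphi$ is invertible.

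For comparison, the paper simply runs B\"uhler's proof of the obscure axiom, which bypasses any comparison map: form the pushout of the admissible monic $ji$ along $i$; Lemma \ref{prop-2.12} then exhibits $\left[\begin{smallmatrix} ji \\ -i \end{smallmatrix}\right]\colon A\to C\oplus B$ as an admissible monic; composing with the automorphism $\left[\begin{smallmatrix} 1 & j \\ 0 & -1 \end{smallmatrix}\right]$ of $C\oplus B$ shows that $\left[\begin{smallmatrix} 0 \\ i \end{smallmatrix}\right]$ is an admissible monic (this composition with an isomorphism is the only place where (E1) would normally enter, and closure of $\W$ under isomorphisms suffices); finally, since $i$ has a cokernel $p$, the cokernel of $\left[\begin{smallmatrix} 0 \\ i \end{smallmatrix}\right]$ is $1_C\oplus p$, and pulling this admissible epic back along the inclusion $D\to C\oplus D$ exhibits $p$ as an admissible epic whose kernel is $i$ up to isomorphism. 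If you want to salvage your pullback picture, you would still need an argument of this matrix type to get started.
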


Note that the split exact structure $\E_{min}$ forms the unique minimal element of the  poset $(\Wex({\A}), \subseteq)$.


\subsection{The left and right weakly exact structures}\label{section:left-and-right-weakly}

We define \emph{left weakly exact structures} and \emph{right weakly exact structures}, generalising the left and right exact structures introduced in \cite[Definition 3.1]{BC} and studied in \cite{HR20,Ru11}.

\begin{definition}\label{right weakly} 
A right weakly exact structure on $\A$ is a class of kernels $I$ which is closed under isomorphisms and satisfies the following properties: 
\begin{itemize}
\item[(Id)]  For all objects  $X$ in $\mathcal{A}$ the identity $1_X$ and the zero monomorphism $0 \longrightarrow X$ are in $I$. 

\item[(P)] 
 The push-out of $f: X \longrightarrow Y \in I$ along an arbitrary morphism $h: X \longrightarrow X'$ exists and yields a morphism $f'\in I$:
$$\xymatrix{
X \; \ar[d]_{h} 
\ar@{->}[r]^{f}  \ar@{}[dr]|{\text{PO}} 
& Y\ar[d]^{h'}\\
X' \; \ar@{->}[r]_{f'} & Y'}$$
\item[(Q)] Given $\xymatrix{ A \ar@{ ->}[r]^{a} & B \ar@{->}[r]^{b} & C  }$ with $ba \in I$ and $a$ has a cokernel, then $a$ is in $I$. 
\item[(S)] $I$ is closed under direct sums of morphisms. 
\end{itemize}
\end{definition}

A left weakly exact structure on $\A$ is given by dual axioms (Id$^{op}$), (P$^{op}$), (Q$^{op}$)(S$^{op}$). 

\begin{remark}
Note that, contrary to exact structures (see \cite[Proposition 2.9]{Bu}) the properties (S) and (S)$^{op}$ above are not implied by the rest of the properties and we need to add them. 
\end{remark}

\begin{theorem} \label{thm:right-left}
Let $\A$ be an additive category. A left weakly exact structure $\D$ on $\A$ can be combined with a right weakly exact structure $\I$ to form a weakly exact structure $\W$ given by the short exact sequences $\xymatrix{ A \ar@{ ->}[r]^{i} & B \ar@{->}[r]^{d} & C  }$ with $i \in I$ and $d \in \D$.
\end{theorem}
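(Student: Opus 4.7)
The class $\W$ consists of kernel-cokernel pairs $(i,d)$ with $i\in\I$ and $d\in\D$. I will check the axioms of Definition~\ref{set of weakly exact} in turn. Closure under isomorphisms is inherited from $\I$ and $\D$. Closure under direct sums follows from (S), (S$^{op}$), together with the elementary observation that a direct sum of kernel-cokernel pairs in an additive category is again a kernel-cokernel pair. Axiom (E0) holds because, for every $A\in\A$, (Id) puts $1_A\in\I$ and (Id$^{op}$) puts $0\colon A\to 0$ in $\D$, so that the kernel-cokernel pair $(1_A,\ 0)$ lies in $\W$; (E0$^{op}$) is obtained symmetrically using $(0\colon 0\to A,\ 1_A)$.

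The substance of the theorem is axiom (E2); axiom (E2$^{op}$) is entirely dual. Fix $(i\colon A\to B,\ d\colon B\to D)\in\W$ and a morphism $t\colon A\to C$. By (P) applied to $i\in\I$, the pushout of $i$ along $t$ exists and yields $s_B\colon B\to S,\ s_C\colon C\to S$ with $s_C\in\I$. Since $di=0=0\cdot t$, the universal property of the pushout applied to the cocone $(d,\ 0\colon C\to D)$ produces a unique $d'\colon S\to D$ with $d's_B=d$ and $d's_C=0$; a direct universal-property check shows $d'=\coker s_C$. To conclude $(s_C,d')\in\W$ one needs $s_C=\ker d'$ (so that the pair is kernel-cokernel) and $d'\in\D$. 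Once the former is in hand, $d'$ has the kernel $s_C$, and property (Q$^{op}$) applied to $d=d's_B\in\D$ yields $d'\in\D$.

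The main obstacle is proving $s_C=\ker d'$. For this I rely on the standard additive-category fact that the pushout of a kernel along any morphism is automatically bicartesian and extends to a morphism of kernel-cokernel pairs
\[
\xymatrix@R=0.6cm{
A\ar[r]^{i}\ar[d]_{t}&B\ar[r]^{d}\ar[d]^{s_B}&D\ar@{=}[d]\\
C\ar[r]_{s_C}&S\ar[r]_{d'}&D.
}
\]
This is precisely the additive content of Lemma~\ref{prop-2.12}, whose proof (after B\"uhler) does not use (E1) or (E1$^{op}$) and therefore does not presuppose an ambient weakly exact structure already in place. Concretely, given $f\colon X\to B,\ g\colon X\to C$ with $s_Bf=s_Cg$, one computes $df=d's_Bf=d's_Cg=0$, so $f$ factors uniquely as $f=ih$ for some $h\colon X\to A$; since $s_C$ is a monomorphism (being a kernel in $\I$), one deduces $g=th$, exhibiting the pushout square as a pullback. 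A short diagram chase using this bicartesian property and the cokernel property of $d'$ then delivers $s_C=\ker d'$. With (E2) established, (E2$^{op}$) follows by the symmetric argument, using (P$^{op}$) for $\D$ to form the pullback and (Q) for $\I$ at the corresponding step, producing the admissible epic together with its admissible monic partner.
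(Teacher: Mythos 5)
Your overall architecture matches the paper's proof: closure under isomorphisms and direct sums from (S)/(S$^{op}$), axioms (E0)/(E0$^{op}$) from (Id)/(Id$^{op}$), and for (E2) the same three-stage plan --- form the pushout via (P), construct $d'$ from the universal property and check $d'=\coker(s_C)$, then obtain $d'\in\D$ from (Q$^{op}$) applied to $d=d's_B$. All of that is fine and is exactly what the paper does.

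The gap is in the step you yourself single out as the main obstacle, namely $s_C=\ker(d')$. You cannot invoke Lemma~\ref{prop-2.12} here: it is a statement about admissible monics of an already-given weakly exact structure $\W$, which is precisely what is being constructed, and the relevant implication (i)~$\Rightarrow$~(iv) of that lemma is essentially the computation you are in the middle of. You partly sidestep this by proving directly that the pushout square is a pullback --- that computation is correct --- but the bicartesian property does not deliver $s_C=\ker(d')$: the pullback property factors \emph{pairs} of morphisms into $B$ and $C$ agreeing in $S$, whereas the kernel property of $s_C$ requires factoring a single morphism $u\colon X\to S$ with $d'u=0$, and neither the pullback property nor the cokernel property of $d'$ (which concerns maps \emph{out of} $S$) gives any handle on such a $u$. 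Likewise, the ``standard fact'' that a pushout of a kernel is again a kernel is false in a general additive category --- guaranteeing this is exactly the job of axiom (P). The argument that actually closes the step, and the one the paper uses (tersely, in the line ``Since $(i',d')$ is a kernel-cokernel pair, we get $i'=\ker(d')$''), is: by (P) we have $s_C\in\I$, and $\I$ is by definition a class of kernels, so $s_C=\ker(q)$ for some morphism $q$; since $qs_C=0$ and $d'=\coker(s_C)$, we may write $q=\bar q d'$; then $d'u=0$ forces $qu=0$, so $u$ factors (uniquely, $s_C$ being monic) through $s_C$. With this one replacement your proof is complete and coincides with the paper's; the bicartesian detour can simply be deleted.
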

\begin{proof} 
We adapt the proof of \cite[Theorem 1]{Ru11} to the case of weakly exact structures.
\end{proof}

\begin{proposition} \label{prop:epi-left}
Every weakly exact structure $\W$ on $\A$ can be constructed from a right weakly exact structure and a left weakly exact structure as in  Theorem \ref{thm:right-left}. 
\end{proposition}\label{prop:left-right-weakly-exact}
\begin{proof} Let $\W$ be a weakly exact structure on an additive category $\A$. Let $I$ be the class of admissible monics of $\W$ and $\D$ the class of admissible epics of $\W$.
First, it is not difficult to show that $I$ and $\D$ are closed under isomorphisms. Second, since $\W$ satisfies $(E0)$ and $(E0)^{op}$, it is clear that $I$ satisfies (Id) and $\D$ satisfies (Id)$^{op}$. Third, by Proposition \ref{prop:obscure-axiom}, $I$ satisfies (Q) and $\D$ satisfies (Q)$^{op}$. And finally, since $\W$ is closed under direct sums it is clear that $I$ satisfies (S) and $\D$ satisfies (S)$^{op}$. 
\end{proof}

\subsection{The maximal weakly exact structure}\label{section:maximal weakly exact}

\begin{definition}\cite{RW77} A kernel $(A,f)$ in an additive category $\A$ is called \emph{semi-stable} if for every push-out square 
\[\xymatrix{
A \; \ar[d]_{t} 
\ar@{ ->}[r]^{f}  \ar@{}[dr]|{\text{PO}} 
& B\ar[d]^{s_B}\\
C \; \ar@{->}[r]_{s_C} & S}
\]

\noindent the morphism $s_C$ is also a kernel.
We define dually a \emph{semi-stable} cokernel.
A short exact sequence $\xymatrix{
A \, \ar@{ >->}[r]^{i} & B \ar@{ ->>}[r]^{d} & C
}$ in $\A$ is said to be \emph{stable} if
$i$ is a semi-stable kernel and $d$ is a semi-stable cokernel. We denote by $\E_{sta}$ the class of all \emph{stable} short exact sequences.
\end{definition}

We recall Crivei's characterisation of the maximum exact structures using the idempotent (or Karoubian) completion $H: \A \to \hat{\A}$ of the category $\A$:

\begin{theorem}\cite[Theorem 3.4]{Cr12} \label{thm: max_stable}
Let $\A$ be an additive category, and let $H: \A \to \hat{\A}$ be its idempotent completion. 
Then the class $\E_{sta}$ of stable short exact sequences of $\A$ defines an exact structure on $\A$ if and only if $\A$ is closed under pushouts and pullbacks for $(\hat{\A}, \hat{\E}_{max})$. In this case, $\E_{sta}$ is the maximal exact structure on $\A$.
\end{theorem}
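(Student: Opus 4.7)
The overall strategy is to leverage the fact that the idempotent completion $\hat{\A}$ is itself idempotent complete, hence weakly idempotent complete, so the discussion preceding the theorem applies to it: on $\hat{\A}$ the class $\hat{\E}_{sta}$ of stable short exact sequences already forms the maximal exact structure $\hat{\E}_{max}$. The plan is to show that stability of a kernel-cokernel pair in $\A$ can be detected in $\hat{\A}$ precisely when the closure condition for pushouts and pullbacks holds, and then to transfer the exact structure axioms across the fully faithful embedding $H:\A\to\hat{\A}$.

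First I would establish a comparison lemma: assuming $\A$ is closed under pushouts and pullbacks for $(\hat{\A},\hat{\E}_{max})$, a kernel-cokernel pair $(i,d)$ in $\A$ belongs to $\E_{sta}(\A)$ if and only if $(H(i),H(d))\in\hat{\E}_{max}$. Since $H$ is fully faithful and reflects kernels and cokernels of morphisms coming from $\A$, a kernel-cokernel pair in $\A$ remains one in $\hat{\A}$. The delicate point is semi-stability: in $\A$ one only tests pushouts along morphisms with target in $\A$, whereas in $\hat{\A}$ one must allow arbitrary targets in $\hat{\A}$. The closure condition is formulated precisely to bridge this asymmetry, by ensuring that every pushout in $\hat{\A}$ of an admissible monic whose source lies in $\A$ is isomorphic to a pushout computed in $\A$, via the standard trick of splitting an idempotent as a direct summand of an object of $\A$.

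For the direction $(\Leftarrow)$, one imports each of the axioms (E0), (E0)$^{op}$, (E1), (E1)$^{op}$, (E2), (E2)$^{op}$ from $\hat{\E}_{max}$ on $\hat{\A}$ to $\E_{sta}(\A)$ on $\A$ through the comparison lemma, using the closure hypothesis to stay inside $\A$ at each step; composition closure of admissible monics, which would otherwise be the awkward axiom to verify directly from semi-stability, is handled this way essentially for free. For $(\Rightarrow)$, suppose $\E_{sta}(\A)$ is an exact structure. Then pushouts of admissible monics along morphisms in $\A$ exist in $\A$ and are admissible monics; lifting this to $\hat{\A}$ by expressing each object of $\hat{\A}$ as a summand of an object of $\A$ and using additivity of pushouts along split morphisms produces the required closure condition.

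Maximality is then immediate: for any exact structure $\E$ on $\A$, axioms (E2) and (E2)$^{op}$ of $\E$ directly express that every admissible monic is a semi-stable kernel and every admissible epic is a semi-stable cokernel, so $\E\subseteq\E_{sta}(\A)$. The main obstacle in the plan is the comparison lemma, in particular verifying that under the closure condition a pushout computed in $\hat{\A}$ from data in $\A$ is genuinely equivalent to one computed in $\A$; this is where the precise statement of ``closed under pushouts and pullbacks for $(\hat{\A},\hat{\E}_{max})$'' must be unpacked and matched against the universal property that defines semi-stability in $\hat{\A}$.
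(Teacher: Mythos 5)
This statement is imported verbatim from Crivei's paper and is not proved in the present article; the text only records it as \cite[Theorem 3.4]{Crivei} and then draws Corollary \ref{cor:stable form maximum weakly exact} from it. Your sketch is a faithful reconstruction of Crivei's actual argument: reduce to the idempotent completion $\hat{\A}$, where stable sequences already give the maximal exact structure, use the closure condition to identify semi-stability in $\A$ with membership in $\hat{\E}_{max}$, transfer the axioms (with (E1) obtained by realizing the cokernel of a composite of inflations as a pushout), and get maximality directly from (E2), (E2)$^{op}$. The only place I would press you is the comparison lemma: the implication ``$(i,d)\in\E_{sta}(\A)$ $\Rightarrow$ $(H(i),H(d))\in\hat{\E}_{max}$'' is not automatic from full faithfulness alone and is exactly where the retract/direct-summand argument and the precise formulation of the closure condition must be spelled out; as a blind proposal, however, the strategy is correct and matches the cited source.
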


Again, since the class of stable short exact sequences clearly forms the maximal class satisfying (E2) and (E2)$^{op}$, we get:

\begin{theorem}\label{cor:stable form maximum weakly exact}
Assume that $\A$ is closed under pushouts and pullbacks for $(\hat{\A}, \hat{\E}_{max})$ (for instance,  $\A$ is weakly idempotent complete).
Then the class of stable short exact sequences forms the unique maximal weakly exact structure on $\A$: 
\[\W_{max}=\E_{max}=\E_{sta} \]
\end{theorem}
\begin{proof}
By applying the arguments of the proof \cite[Theorem 3.4]{Cr12}.
\end{proof}



\section{Sub-bifunctors and closed sub-bifunctors of $\Ext^1$}\label{section:subfunctors}
\subsection{From weakly exact structures to bifunctors} \label{section:weakly to bifunctors}
Let $\W$ be a weakly exact structure on $\A$. The aim of this section is to  associate with $\W$ an additive functor to the category of abelian groups
\[\bW=\Ext^1_\W(-,-):\A^{op}\times \A \to \Ab.\]
This follows the classical construction for abelian categories:

\begin{definition}\label{definition:bifunctor}
Define for objects $A,C \in\A$ the set
\[ \bW(C,A) = \Ext^1_\W(C,A) = \left\{ \; \overline{(i,d)} \quad | \quad \xymatrix{ A \ar@{ ->}[r]^{i} & B \ar@{->}[r]^{d} & C  } \; \in \W\right\},
\]

where we denote by $\overline{(i,d)}$ the usual equivalence class of the short exact sequence $(i,d)$.
The action of the functor $\bW$ on morphisms is defined by taking the pushout or pullback along morphisms.
Moreover, one can define on $\bW(C,A)$ an addition by Baer's sum.

Given a left weakly exact structure $\D$ on $\A$ and objects $A,C \in\A$, we define
\[ \bD_A(C) =  \left\{ \; \overline{(i,d)} \quad | \quad \xymatrix{ A \ar@{ ->}[r]^{i} & B \ar@{->}[r]^{d} & C  } \mbox{ is a short exact sequence with }  d\in \D\right\}
\]
We also use the notation $ \Ext_\D^1(C,A) = \bD_A(C)$. Dually, we define $ \Ext_\I^1(C,A) = \bI_A(C)$ for a right weakly exact structure $\I$.
\end{definition}
The following results are easily adapted from the abelian context:
\begin{lemma}\label{lemma:right-functor}
Let $\D$ be a left weakly exact structure on $\A$.
Then for each $A \in \A$, the construction in Definition \ref{definition:bifunctor} yields a  functor $\bD_A=\Ext_\D^1(-,A):\A^{op} \to \Set$.
Dually, for every object $C \in \A$, a right weakly exact structure $\I$ defines a  functor $\bI_C=\Ext_\I^1(C,-):\A \to \Set$.
\end{lemma}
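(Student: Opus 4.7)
I would prove the statement for $\bD_A$; the case of $\bI_C$ is entirely dual, with pullbacks replaced by pushouts and with axiom (P$^{op}$) replaced by (P).

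Given a morphism $c: C' \to C$, the plan is to define the set map $\bD_A(c): \bD_A(C) \to \bD_A(C')$ by sending the equivalence class $[E]=[(i,d)]$ to the class of its pullback $Ec$ along $d$ and $c$, exactly as in Definition \ref{definition:bifunctor}. Axiom (P$^{op}$) provides the existence of the pullback and guarantees that the pulled-back epic $d''$ still lies in $\D$. The induced morphism $i'' : A \to PB$ is then produced from the pair $(i : A \to B,\; 0 : A \to C')$ via the universal property of the pullback.

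The key point to verify is that $(i'', d'')$ is a kernel-cokernel pair, so that it represents an element of $\bD_A(C')$. That $i'' = \ker d''$ is the standard pullback argument: any $g$ with $d''g = 0$ makes its composite with $PB \to B$ factor through $\ker d = A$, and the pullback property supplies uniqueness. That $d'' = \coker i''$ will be the main obstacle, because $\D$ is just a class of cokernels with no a priori link to kernels. I would handle it using the general observation that a cokernel $d''$ which admits a kernel $k$ is automatically the cokernel of $k$: writing $d'' = \coker f$ for some $f$, the equality $d''f = 0$ forces $f$ to factor through $k$, and this is enough to transfer the universal property from $f$ to $k$.

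The remaining two items are routine diagram chases: well-definedness on equivalence classes follows because any isomorphism between middle terms of two representatives of $[E]$ lifts, through the universal property of the pullback, to an isomorphism between the corresponding pulled-back sequences compatible with $1_A$ and $1_{C'}$; and functoriality ($\bD_A(1_C)=\mathrm{id}$ and $\bD_A(cc') = \bD_A(c')\bD_A(c)$) is immediate from the fact that the pullback along an identity is trivial and that two stacked pullback squares paste to a pullback square. The only non-formal input used beyond these universal properties is precisely axiom (P$^{op}$).
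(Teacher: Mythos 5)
Your plan is correct and follows essentially the same route as the paper, whose proof is just the instruction to adapt \cite[Chapter VII, Lemma 1.3 (i), (ii)]{Mi}: one defines $\bD_A(c)$ by pullback, with axiom (P$^{op}$) supplying both the existence of the pullback and the membership of the pulled-back deflation $d''$ in $\D$, and then checks well-definedness and functoriality by universal properties. Your explicit verification that the pulled-back pair is again a kernel--cokernel pair --- in particular the observation that a morphism which is a cokernel of something and admits a kernel is automatically the cokernel of that kernel, which is what makes the argument work when $\D$ is merely a class of cokernels --- is precisely the detail the paper leaves to the cited reference.
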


\begin{proposition}\label{proposition:bifunctor} Let $\W$ be weakly exact structure on $\A$.
Then the construction in Definition \ref{definition:bifunctor} yields an additive bifunctor \[\bW=\Ext^1_{\W}(-,-):\A^{op} \times \A \longrightarrow \Ab ; \; (C, A)\longmapsto \bW(C,A).\]
\end{proposition}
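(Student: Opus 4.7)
The plan is to adapt the classical construction of $\Ext^1$ as a bifunctor on an abelian category, as presented in \cite[Ch.~VII]{Mi}, to our setting. The key observation is that all the operations involved --- pushouts, pullbacks, direct sums, and Baer sums --- preserve membership in $\W$ by axioms (E2), (E2)$^{op}$, and (S). This suggests that the classical arguments carry over essentially verbatim, provided one checks that none of them secretly rely on (E1) or (E1)$^{op}$.

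First I would combine the one-variable functors of Lemma \ref{lemma:right-functor} into a single bifunctor $\A^{op}\times\A \to \Set$. The content of this step is the interchange identity $a(Ec) = (aE)c$ for morphisms $a\colon A\to A'$, $c\colon C'\to C$, and $E\in\bW(C,A)$. This reduces, via the universal properties, to verifying that performing pushout-then-pullback on a given conflation yields the same representative, up to equivalence of extensions, as pullback-then-pushout --- a standard diagram chase that uses only the definitions of pushouts and pullbacks.

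Next I would equip each $\bW(C,A)$ with the Baer-sum structure. The zero element is represented by the split sequence, which belongs to $\W$ by Lemma \ref{lemma:min-exact}. Commutativity is immediate from the symmetry of direct sums. Associativity follows by writing both parenthesisations of $E_1+E_2+E_3$ as $\nabla_A^{(3)}(E_1\oplus E_2\oplus E_3)\Delta_C^{(3)}$, exactly as in Mitchell; the inverse of $E$ is $(-1_A)E$. I would then check that the maps $\bW(c,A)$ and $\bW(C,a)$ are group homomorphisms, which follows from the fact that pushout and pullback commute with finite direct sums and with the diagonal and codiagonal morphisms. This promotes the bifunctor to take values in $\Ab$. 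Additivity amounts to the natural isomorphism $\bW(C_1\oplus C_2,A)\cong \bW(C_1,A)\oplus \bW(C_2,A)$ and its dual in the first variable, obtained by splitting along the canonical structure maps.

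The main obstacle is not any single argument but rather bookkeeping: each step of the classical construction must be audited to confirm that it uses only pushouts, pullbacks, direct sums, and closure of $\W$ under isomorphism, and never the composability axioms (E1) or (E1)$^{op}$. Since the Baer-sum construction is phrased entirely in terms of these operations, and since Lemma \ref{prop-2.12} already supplies the bicartesian-square characterisations needed for the diagram chases, no substantively new ideas beyond the classical proof should be required.
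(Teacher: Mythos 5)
Your proposal is correct and follows essentially the same route as the paper: the paper's proof likewise reduces everything to Mitchell's Chapter VII (Lemma 1.3(iii) for combining the one-variable functors of Lemma \ref{lemma:right-functor} into a bifunctor, and Theorem 1.5 for the abelian-group and additivity structure), with the key observation being exactly your ``audit'' point --- that none of the classical arguments use (E1) or (E1)$^{op}$, only pushouts, pullbacks, direct sums, and closure under isomorphism. Your write-up merely makes explicit the individual verifications (interchange identity, Baer-sum axioms, compatibility of the group structure with the morphism actions) that the paper delegates to the citations.
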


\begin{lemma}\label{lemma:sub-bifunctor}
Let $\V$ and $\W$ be weakly exact structures on $\A$ with $\V \subseteq\W$.
Then $\bV=\Ext_\V^1(-,-):\A^{op}\times \A \to \Ab$ is an additive sub-bifunctor of $\bW$.
\end{lemma}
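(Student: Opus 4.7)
The plan is to verify directly that the construction of Definition \ref{definition:bifunctor} is compatible with the inclusion $\V \subseteq \W$, at the level of sets, morphism actions, and the Baer sum, and then invoke Proposition \ref{proposition:bifunctor} to conclude additivity.

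First I would establish the set-theoretic inclusion $\bV(C,A) \subseteq \bW(C,A)$ for every pair $(C,A) \in \A^{op} \times \A$. Since the equivalence relation on short exact sequences is defined purely in terms of commutative diagrams in $\A$ (with the two outer vertical maps identities and the middle one forced to be an isomorphism by the five-lemma-type argument for kernel–cokernel pairs), an equivalence class $\overline{(i,d)}$ of a sequence in $\V$ is the same equivalence class when viewed in $\W$. Since $\V \subseteq \W$ as classes of kernel–cokernel pairs, the map $\overline{(i,d)} \mapsto \overline{(i,d)}$ is a well-defined injection $\bV(C,A) \hookrightarrow \bW(C,A)$.

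Next I would check naturality in both variables. Given $a\colon A \to A'$ and $E=(i,d) \in \V$, the pushout square constructed from (E2) lives in $\A$, and since $\V$ satisfies (E2), the resulting sequence $aE$ belongs to $\V$. The same pushout square also computes the action of $a$ on $E$ regarded as an element of $\bW$, by the universal property of pushouts (which is intrinsic to $\A$, independent of whether we work in $\V$ or $\W$). Hence the two maps $\bV(C,a)$ and $\bW(C,a) \circ (\bV(C,A) \hookrightarrow \bW(C,A))$ agree, and dually for the pullback action of morphisms $c\colon C' \to C$ using (E2)$^{op}$. This gives a natural transformation $\bV \to \bW$ whose components are injective.

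Finally, I would argue compatibility with the Baer sum. The direct sum $E_1 \oplus E_2$ of two sequences in $\V$ is again in $\V$ by the direct sum closure of Definition \ref{set of weakly exact}, and equals the direct sum of the same sequences viewed in $\W$; composing with the codiagonal and the diagonal via pushout and pullback as in Definition \ref{definition:bifunctor} then yields the same element of $\bV(C,A)$ and of $\bW(C,A)$ by the argument of the previous paragraph. Thus the inclusion $\bV(C,A) \hookrightarrow \bW(C,A)$ is a group homomorphism. Combined with naturality and with Proposition \ref{proposition:bifunctor} (which guarantees that both $\bV$ and $\bW$ are themselves additive bifunctors), this exhibits $\bV$ as an additive sub-bifunctor of $\bW$.

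The only subtle point is making sure the equivalence relation and the pushout/pullback operations used to define $\bV$ genuinely coincide with those used to define $\bW$; since both are constructions internal to $\A$ that only require closure of the relevant classes, the main obstacle is simply keeping track of axioms (E2), (E2)$^{op}$, (S), (S)$^{op}$ for $\V$, which are available by hypothesis.
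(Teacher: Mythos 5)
Your proposal is correct and follows essentially the same route as the paper: set-level inclusion of $\bV(C,A)$ into $\bW(C,A)$, compatibility of the Baer sum via direct-sum closure together with (E2) and (E2)$^{op}$, and compatibility of the morphism actions via pushout and pullback. The only cosmetic difference is that you obtain the zero element and additive inverses for free from Proposition \ref{proposition:bifunctor}, whereas the paper checks them explicitly (the split sequence as zero, and the pushout along $-1_A$ for the inverse); both are valid.
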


\begin{remark} \label{remark:Ext1}
We consider the partial order on $\Bf(\A)$ given by
\[F\leq F' \Longleftrightarrow F(C,A) \leq F'(C,A) \mbox{ for all } A,C \in \A\]
that is, $F(C,A)$ is a subgroup of $F'(C,A)$ for every pair of objects in $\A$. 
The construction in Definition \ref{definition:bifunctor} thus defines a map $\Phi$ from the weakly exact structures included in $\E_{max}$ on the additive category $\A$ to the $\A-\A-$bimodules:
\begin{align*}
\Phi : \Wex(\A) & \longrightarrow \Bf(\A)\\ \W & \longmapsto \bW=\Ext^1_{\W}(-,-).
\end{align*}
\end{remark}

Lemma \ref{lemma:sub-bifunctor} shows that $\Phi$ is a morphism of posets. 
The elements in $\Ex(\A)$ are sent under the map $\Phi$ to subfunctors of $\Ext^{1}_{\A}(-,-)=\bE_{max}$  that enjoy an additional property, namely they give rise to a long exact sequence of functors:

\begin{definition}(\cite{BH,DRSS}) \label{definition:closed}
An additive sub-bifunctor $F$ of $\Ext^{1}_{\A}(-,-)$ is called {\em closed} if for any short exact sequence 
\[\xymatrix{E: \,  A \ar@{ ->}[r]^{i} & B \ar@{->}[r]^{d} & C }\] whose class lies in $F(C,A)$ and any object $X$ in $\A$, the sequences
\[ 0 \to \Hom(X,A) \to \Hom(X,B) \to \Hom(X,C) \to F(X,A) \to F(X,B) \to F(X,C)
\]
and 
\[ 0 \to \Hom(C,X) \to \Hom(B,X) \to \Hom(A,X) \to F(C,X) \to F(B,X) \to F(A,X)
\]
are exact in the category of abelian groups. 
\end{definition}
\begin{proposition}\cite[Prop 1.4]{DRSS}
Let $\E$ be an exact structure on $\A$. Then the bifunctor $\Phi(\E)$ is closed.
\end{proposition}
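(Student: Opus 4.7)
The plan is to verify exactness of the first displayed sequence directly from the axioms of an exact category; the second sequence then follows by duality, since $\E^{op}$ is an exact structure on $\A^{op}$. The connecting homomorphism $\delta : \Hom(X,C) \to \bE(X,A)$ is defined by $\delta(f) := \overline{Ef}$ using the pullback provided by axiom (E2)$^{op}$, and its additivity is a standard Baer-sum computation. Exactness at $\Hom(X,A)$, $\Hom(X,B)$, $\Hom(X,C)$, and $\bE(X,A)$, together with the inclusion $\operatorname{im}(i_\ast) \subseteq \ker(d_\ast)$ at $\bE(X,B)$, relies only on the kernel-cokernel structure of $E$ together with axioms (E2) and (E2)$^{op}$. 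For instance, at $\Hom(X,C)$ one notes that $\overline{Ef} = 0$ iff $Ef$ splits iff $f$ lifts through $d$; and given $\overline{E'} \in \bE(X,A)$ with $i_\ast \overline{E'} = 0$, the splitting of the pushout $iE'$ extracted via Lemma \ref{prop-2.12} produces a morphism $f : X \to C$ such that $Ef$ is equivalent to $E'$.

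The crucial step is the reverse inclusion $\ker(d_\ast) \subseteq \operatorname{im}(i_\ast)$ at $\bE(X,B)$. Given a representative $E'' = (B \rightarrowtail M \twoheadrightarrow X)$ with $d_\ast \overline{E''} = 0$, the plan is to form the pushout of $B \rightarrowtail M$ along $d : B \twoheadrightarrow C$, producing a split admissible short exact sequence $C \rightarrowtail N \twoheadrightarrow X$ along the bottom. The resulting $3 \times 3$ diagram has $E$ as its leftmost column and $E''$ as its top row. Applying the $3 \times 3$-lemma for exact categories then yields an admissible short exact sequence $E' = (A \rightarrowtail M' \twoheadrightarrow X)$ together with an admissible monic $M' \rightarrowtail M$ whose cokernel is $C$, which identifies $E''$ with $i_\ast E'$.

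The main obstacle is this last construction: the $3 \times 3$-lemma relies on the composition axioms (E1) and (E1)$^{op}$ to splice together the admissible monics and admissible epics appearing in the diagram, which is precisely the ingredient absent from the notion of weakly exact structure. This is the expected obstruction to extending the conclusion from closed to general weakly exact sub-bifunctors: the other exactness checks above already hold in the weakly exact setting, but the reverse inclusion at $\bE(X,B)$ (and dually at $\bE(B,X)$) typically fails without (E1) and (E1)$^{op}$.
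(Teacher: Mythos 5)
The paper does not actually prove this proposition---it is quoted verbatim from \cite[Prop.~1.4]{DRSS}---so there is no in-paper argument to compare against; your proposal has to be measured against the standard proof in the literature (Butler--Horrocks, MacLane Ch.~XII, DRSS). Your overall architecture matches that proof: the only nontrivial exactness is at $F(X,B)$ (resp.\ $F(B,X)$), as the paper itself remarks after Definition \ref{definition:closed}, the second sequence follows by passing to $\E^{op}$ on $\A^{op}$, and the composition axioms (E1), (E1)$^{op}$ enter exactly at the inclusion $\ker(d_*)\subseteq\operatorname{im}(i_*)$. Your closing observation that this is precisely the obstruction in the weakly exact setting is also correct and is the content of the ``if and only if'' in \cite[Prop.~1.4]{DRSS}.

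However, the crucial step as you describe it does not go through as written. First, the $3\times 3$ diagram you announce is inconsistent: if $E\colon A\rightarrowtail B\twoheadrightarrow C$ is the leftmost column and $E''\colon B\rightarrowtail M\twoheadrightarrow X$ is the top row, the corner entry would have to be both $A$ and $B$. What you presumably want is $E$ as the leftmost column, $E''$ as the \emph{middle} row, the sought-for $E'\colon A\rightarrowtail M'\twoheadrightarrow X$ as the top row, and $C=C\to 0$ as the bottom row. Second, and more seriously, the pushout of $B\rightarrowtail M$ along $d$ produces a $2\times 3$ ladder whose rows are $E''$ and $d_*E''\colon C\rightarrowtail N\twoheadrightarrow X$; it is not yet the $3\times 3$ diagram. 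To build the middle column $M'\rightarrowtail M\twoheadrightarrow C$ of the intended diagram you must first use a retraction $r\colon N\to C$ of the split bottom row to form $p:=r\circ g\colon M\to C$ (where $g\colon M\to N$ is the pushout map), verify that $p$ is an admissible epic with $p\circ(B\rightarrowtail M)=d$, and set $M':=\ker p$. That verification is where (E1)$^{op}$ is genuinely consumed (the obscure axiom alone does not suffice, since it presupposes that $p$ has a kernel, and the paper shows the obscure axiom already holds for weakly exact structures); only after this column exists can the $3\times 3$ lemma deliver the exactness of the top row and Lemma \ref{prop-2.12}(iv)$\Rightarrow$(i) identify $E''$ with $i_*E'$. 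So the plan is salvageable and follows the standard route, but the bridge from ``$d_*E''$ splits'' to ``there is an admissible short exact sequence $M'\rightarrowtail M\twoheadrightarrow C$'' is the technical heart of the proof and is currently missing.
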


\subsection{From sub-bifunctors of $\Ext^1_\A$ to weakly exact structures}\label{section:sub-bifunctors to weakly}

We construct a partial inverse of the function $\Phi$. 
Consider the interval 
\[\Wex(\E_{max}):=[\E_{min}, \E_{max}] \subseteq \Wex(\A).\] 
Likewise, we write $\Bf(\bE_{max})$ for the class of sub-objects of $\bE_{max}$ in $\Bf(\A)$:
\[\Bf(\bE_{max}) := [\bE_{min},\bE_{max}] \subseteq \Bf(\A)\]
is the interval of all additive bifunctors between the minimum and the maximum exact structure on $\A.$
Note that for a weakly idempotent complete category $\A$, we have that $\E_{max}$ is the maximal weakly exact structure on $\A$, therefore $\Wex(\E_{max}) = \Wex(\A).$


\begin{definition}\cite{BH,DRSS}\label{def:Psi}
Let $F: \A^{op} \times \A \longrightarrow \Ab$ be an additive sub-bifunctor of $\Ext^{1}_{\A}(-,-)$.
Define  $\W_{F}$ to be the class of short exact sequences formed by all classes $\overline{(i,d)} \in F(C,A).$
\end{definition}
\begin{proposition}(\cite{BH,DRSS})\label{prop:bijections}
The construction in Definition \ref{def:Psi} yields a map 
\begin{align*}
\Psi :  \Bf(\bE_{max}) & \longrightarrow \Wex(\E_{max})\\
F & \longmapsto \W_{F}.
\end{align*}
Moreover, the functions $\Phi$ and $\Psi$ induce mutually inverse poset isomorphisms
\[\begin{array}{ccc}
 \Wex(\E_{max}) & \longleftrightarrow & \Bf(\bE_{max}) \\
 \cup &   & \cup\\
\Ex(\A) & \longleftrightarrow & \Cbf(\A)
\end{array}\]
where $\Cbf(\A)$ denotes the subclass of closed sub-bifunctors of $\Ext_\A^{1}$.
\end{proposition}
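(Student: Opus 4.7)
The plan is to check that $\Psi$ lands in $\Wex(\E_{max})$, verify that $\Phi$ and $\Psi$ are mutually inverse (this will be essentially formal from the constructions), and finally check that exact structures correspond precisely to closed sub-bifunctors. The restriction statement for $\Ex(\A) \leftrightarrow \Cbf(\A)$ is the classical content of \cite{BuHo,DRSS}, so our main task is to ensure everything still works one notch down, at the level of weakly exact structures.

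First I verify $\Psi(F) = \W_F$ is a weakly exact structure on $\A$. Closure under isomorphisms is immediate, since $F(C,A)$ is defined on equivalence classes of short exact sequences. The split sequence associated to $A \oplus C$ corresponds to $0 \in F(C,A)$, which yields $(E0)$ and $(E0)^{op}$ as in Lemma \ref{lemma:min-exact}. For $(E2)$, given $(i,d) \in \W_F$ and a morphism $a\colon A \to A'$, the pushout exists in $\A$ because $F \leq \bE_{max}$ and $\bE_{max}$ supports pushouts of admissible monics (by Rump's theorem), and the resulting class $a\overline{(i,d)} = F(C,a)(\overline{(i,d)})$ lies in $F(C,A')$ by functoriality of $F$; dually for $(E2)^{op}$. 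Closure under direct sums is the key nontrivial point: if $E_j \in F(C_j, A_j)$ for $j=1,2$, then $E_1 \oplus E_2$ represents the element $(F(\iota_1, \pi_1) + F(\iota_2, \pi_2))(E_1, E_2)$ of $F(C_1 \oplus C_2, A_1 \oplus A_2)$, which lies in $F$ because $F$ is additive (biadditive). Hence $\Psi(F) \in \Wex(\E_{max})$.

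Next I check the inversion. For $\W \in \Wex(\E_{max})$, Proposition \ref{proposition:bifunctor} shows $\Phi(\W) = \bW$ is a sub-bifunctor of $\bE_{max}$ by Lemma \ref{lemma:sub-bifunctor}, and unwinding definitions $\Psi(\bW)$ consists of those short exact sequences whose equivalence class lies in $\bW(C,A)$, which by Definition \ref{definition:bifunctor} is exactly $\W$ (using that $\W$ is closed under isomorphism). Conversely, for $F \in \Bf(\bE_{max})$, the group $\Phi(\Psi(F))(C,A)$ consists of equivalence classes of sequences $(i,d) \in \W_F$, i.e.\ precisely $F(C,A)$, and the Baer sum as well as the induced actions by morphisms agree with $F$'s group structure and functoriality by the same classical computation as for abelian categories (see \cite[Chapter VII]{Mi}); none of the relevant steps use $(E1)$ or $(E1)^{op}$. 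Both $\Phi$ and $\Psi$ visibly preserve inclusions, so they are mutually inverse poset isomorphisms.

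Finally, the restriction to $\Ex(\A) \leftrightarrow \Cbf(\A)$ is \cite[Prop.~1.4, Prop.~1.7]{DRSS} in the setting of abelian categories, and was extended to additive categories admitting a maximal exact structure in \cite{En18,FG}; the argument transfers verbatim once $\Phi$ and $\Psi$ are known to be mutually inverse on the ambient posets. Concretely, given an exact structure $\E$, the long Hom--Ext exact sequences with respect to $\bE = \Phi(\E)$ follow from $(E1)$, $(E1)^{op}$ together with the axioms already verified, so $\bE$ is closed. Conversely, if $F$ is closed, then for any pair of composable admissible monics $i\colon A \rightarrowtail B$, $j\colon B \rightarrowtail B'$ in $\W_F$, the connecting homomorphism in the long exact sequence produced by applying $F(-,A)$ to the short exact sequence containing $j$ forces the kernel-cokernel pair of $ji$ to represent a class in $F$, so that $ji$ is an admissible monic of $\W_F$; dually for epics. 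Hence $\W_F$ satisfies $(E1)$ and $(E1)^{op}$ and is an exact structure. The principal obstacle throughout is purely bookkeeping: making sure that the classical abelian-category arguments for the Baer sum and the long exact sequences can be carried out without invoking closure under composition, so that they survive the passage to general weakly exact structures.
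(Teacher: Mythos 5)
Your proposal is correct and follows essentially the same route as the paper: verify the axioms of a weakly exact structure for $\W_F$ one at a time (isomorphism closure, $(E0)$/$(E0)^{op}$ via split sequences, $(E2)$/$(E2)^{op}$ via functoriality of $F$ inside $\bE_{max}$, direct sums via additivity of $F$ as in \cite[Lemma 1.2]{DRSS}), note that mutual inversion of $\Phi$ and $\Psi$ is formal, and invoke \cite[Proposition 1.4]{DRSS} for the equivalence between closedness of $F$ and exactness of $\W_F$. The extra detail you supply on the connecting-homomorphism argument for $(E1)$ is a useful expansion of what the paper delegates to the references, but it is not a different method.
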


\subsection{Example}\label{section:example}

We reconsider here Example \ref{A3} in light of the bijection from the last proposition: Let $\A = \rep Q$ be the category of representations of  the quiver
\[ Q : \qquad \xymatrix{ 1 \ar[r]  & 2 \ar[r] & 3 }\]

The Auslander-Reiten quiver of $\A$ is as follows:
\begin{center}
\begin{tikzpicture}

\node (a) at (-1,0) {$P_2$};
\node (b) at (0,1) {$P_1$};
\node (c) at (0,-1) {$S_2$};
\node (d) at (1,0) {$I_2$};
\node (e) at (-2,-1) {$P_3$};
\node (f) at (2,-1) {$S_1$};
 \draw [-latex] (a) -- (b);
 \draw [-latex] (a) -- (c);
 \draw [-latex] (b) -- (d);
 \draw [-latex] (c) -- (d);
 \draw [-latex] (e) -- (a);
 \draw [-latex] (d) -- (f);
 \draw [dotted,-] (e) -- (c);
 \draw [dotted,-] (c) -- (f);
 \draw [dotted,-] (a) -- (d);
 \end{tikzpicture}
 \end{center}

There are (up to equivalence) exactly five non-split exact sequences with indecomposable end terms, where the first three are the Auslander-Reiten sequences:

\begin{enumerate}
\item[($\alpha$)] \hspace{2.5cm}$\xymatrix{ 0 \ar[r] & P_3 \ar[r]^{a} & P_2 \ar[r]^c & S_2  \ar[r] & 0}$
\item[($\beta$)] \hspace{2.5cm}$ \xymatrix{ 0 \ar[r] & S_2 \ar[r]^e & I_2 \ar[r]^f & S_1 \ar[r]  & 0 }$
\item[($\gamma$)] \hspace{2cm}$ \xymatrix{ 0 \ar[r] & P_2 \ar[r] & P_1\oplus S_2 \ar[r] & I_2 \ar[r]  & 0 }$
\item[($\delta$)] \hspace{2.5cm}$ \xymatrix{ 0 \ar[r] & P_3 \ar[r]& P_1 \ar[r]^d & I_2  \ar[r] & 0}$
\item[($\epsilon$)] \hspace{2.5cm}$ \xymatrix{ 0 \ar[r] & P_2 \ar[r]^b & P_1 \ar[r] & S_1 \ar[r] & 0}$
\end{enumerate}

An additive functor is uniquely determined by its values on indecomposable objects. To study additive sub-bifunctors of $\Ext^1_{\A}$ it is therefore sufficient to examine the bifunctor structure on the vector space generated by these five non-split exact sequences with indecomposable end-terms. It is depicted in the following diagram, which indicates the multiplication rules 
 $\delta e=\alpha, a\delta=\gamma, \epsilon f=\gamma, c\epsilon=\beta$ (see Definition 3.1) :
\begin{center}
\begin{tikzpicture}

\node (a) at (-1,0) {$\epsilon$};
\node (b) at (-2,-1) {$\beta$};
\node (c) at (0,-1) {$\gamma$};
\node (d) at (1,0) {$\delta$};
\node (f) at (2,-1) {$\alpha$};
   \path[->] 
  (d)  edge node[above]  {$\scriptstyle e$} (f)
  (d)  edge node[above]  {$\scriptstyle a$} (c)
  (a)  edge node[above]  {$\scriptstyle f$} (c)
  (a)  edge node[above]  {$\scriptstyle c$} (b)
  ;       
 \end{tikzpicture}
 \end{center}

It is easy to see that $\Ext^1_{\A}$ admits 13 additive sub-bifunctors, 
and the sub-bifunctor lattice is given in Figure 2, 
indicating each sub-bifunctor by a set of generators.
The eight closed sub-bifunctors, corresponding to the eight exact structures on $\A$, are indicated in {\color{blue}blue}.
Note that the sub-bifunctor generated by the set of all Auslander–Reiten sequences $\{\alpha, \beta, \gamma\}$ corresponds to the Auslander–Reiten phantom morphisms studied in \cite{FGHT}.

\bigskip

\begin{center}
   
   \begin{tikzpicture}
\node (0) at (0,-2) {\color{blue}$\emptyset$};
\node (1) at (-2,-1) {\color{blue}$\alpha$};
\node (2) at (0,-1) {\color{blue}$\gamma$};
\node (3) at (2,-1) {\color{blue}$\beta$};
\node (4) at (-2,0) {$\alpha,\gamma$};
\node (5) at (-0,0) {\color{blue}$\alpha,\beta$};
\node (6) at (2,0) {$\beta,\gamma$};
\node (7) at (-2,1) {\color{blue}$\alpha,\gamma,\delta$};
\node (8) at (0,1) {$\alpha,\beta,\gamma$};
\node (9) at (2,1) {\color{blue}$\beta,\gamma,\epsilon$};
\node (10) at (-1,2) {$\alpha,\beta,\gamma,\delta$};
\node (11) at (1,2) {$\alpha,\beta,\gamma,\epsilon$};
\node (12) at (0,3) {\color{blue}$\alpha,\beta,\gamma,\delta,\epsilon$};
 \draw [-] (0) -- (1);
 \draw [-] (0) -- (2);
 \draw [-] (0) -- (3);
 \draw [-] (1) -- (4);
 \draw [-] (1) -- (5);
 \draw [-] (2) -- (4);
 \draw [-] (2) -- (6);
\draw [-] (3) -- (6);
 \draw [-] (3) -- (5);
\draw [-] (4) -- (7);
 \draw [-] (4) -- (8);
\draw [-] (5) -- (8);
 \draw [-] (6) -- (8);
 \draw [-] (6) -- (9);
 \draw [-] (7) -- (10);
 \draw [-] (8) -- (10);
 \draw [-] (8) -- (11);
 \draw [-] (9) -- (11);
 \draw [-] (10) -- (12);
 \draw [-] (11) -- (12);
    \end{tikzpicture}
    
  Figure 2:  sub-bifunctors of $\Ext^1_\A(-,-)$
    \label{figure:subbimodules}
\end{center}

\subsection{Weakly exact structures as bimodules}\label{sec:bimodules}

\begin{definition}
Let $\mathcal{A}$ be an additively finite, Hom-finite Krull-Schmidt category with indecomposables $X_1, \ldots , X_n$ and denote by $A = \End(X)$ with $X=X_1 \oplus \cdots \oplus X_n$ its Auslander algebra.
The Krull-Schmidt property implies that the additive category $\A$ is weakly idempotent complete, thus as discussed in Section \ref{section:maximal weakly exact}, we know that the maximum weakly exact structure coincides with the maximum weakly exact structure  formed by the stable short exact sequences. 
The corresponding bifunctor $\bE_{max}$, evaluated at the object $X$ yields a bimodule 
\[ \B= \bE_{max}(X,X)\]
over the Auslander algebra $A$.


We denote by $\Bim(\B)$ the class of all sub-bimodules of $_A\B_A$; it forms a poset ($\Bim(\B)$,$\subseteq$) with inclusion as order relation.
\end{definition} 

\begin{example}
In the example studied in Section \ref{section:example}, the Auslander algebra $A$ is the algebra whose quiver is the Auslander-Reiten quiver with mesh relations, and the $A-A-$bimodule $\B=\bE_{max}(X,X)$ is the $\Ext-$bimodule on $A$, a five-dimensional bimodule with basis given by the elements $\alpha,\beta,\gamma,\delta,
\epsilon$. 
The Figure 2 
describes the bimodule lattice ($\Bim(\B)$,$\subseteq$) in this example.
\end{example}

\section{Weakly extriangulated structures}
 Extriangulated structures \cite{NakaokaPalu1} (or, equivalently, $1-$exangulated structures \cite{HLN}) generalize both exact and triangulated categories. In this Section, we generalise these categories, by defining their weak versions.
 
\begin{definition}\cite{HLN}
Let $\mathbb{E}: \ca^{op} \times \ca \to \Ab$ be an additive bifunctor. Given a pair of objects $A, C \in \ca,$ we call an element $\delta \in \mathbb{E}(C, A)$ an $\mathbb{E}-$extension. When we want to emphasize $A$ and $C$, we also write ${}_{A}\delta_{C}$.
\end{definition}

Since $\mathbb{E}$ is a bifunctor, each morphism $a \in \Hom(A, A')$ induces the extension $a_{\ast}(\delta) := \mathbb{E}(C, a) (\delta) \in \mathbb{E}(C, A')$. 
Similarly, we set $c^{\ast}(\delta) := \mathbb{E}(c, A) (\delta) \in \mathbb{E}(C', A)$.

Moreover, we have $\mathbb{E}(c, a) (\delta) = c^{\ast} a_{\ast} (\delta) = a_{\ast} c^{\ast} (\delta).$

By the Yoneda lemma, each extension ${}_{A}\delta_{C}$ induces a pair of natural transformations
$\delta_{\sharp}: \Hom(-, C) \to \mathbb{E}(-, A) \ \ \mbox{and} \ \ \delta^{\sharp}: \Hom(A, -) \to \mathbb{E}(C, -)$ by
\begin{align*}
(\delta_{\sharp})_X: \Hom(X, C) \to \mathbb{E}(X, A), \ \ c \mapsto c^{\ast}(\delta);\\
(\delta^{\sharp})_X: \Hom(A, X) \to \mathbb{E}(C, X), \ \ a \mapsto a_{\ast}(\delta).
\end{align*}

\begin{definition} \label{morphism:extensions}
A morphism of extensions ${}_{A}\delta_{C} \to {}_{B}\rho_{D}$ is a pair of morphisms $(a, c) \in \Hom(A, B) \times \Hom (C, D)$ such that $a_{\ast}(\delta) = c^{\ast}(\rho).$
\end{definition}

\begin{definition}
A \emph{weak cokernel} of a morphism $f: A \to B$ in $\ca$ is a morphism
$g : B \to C$ such that 
the sequence of functors 
$$\Hom(C, -) \to \Hom(B, -) \to \Hom(A, -)$$
is exact.
Equivalently, $g$ is a weak cokernel of $f$ if $g \circ f = 0$ and for each morphism
$h: B \to X$ such that $h \circ f = 0,$ there exists a (not necessarily unique) morphism
$l: C \to X$
such that $h = l \circ g.$
Weak kernel is a weak cokernel in $\ca^{op}.$
\end{definition}

Note that weak (co)kernels satisfy the same factorization properties as usual (co)kernels, but without requiring uniqueness.
Clearly, a weak (co)kernel $g$ of $f$ is a (co)kernel of $f$ if and only if $g$ is a monomorphsim (resp. an epimorphism).

\begin{definition}
We call a pair of composable morphisms 
$A \overset{f}\to B \overset{g}\to C$
a \emph{weak kernel-cokernel pair} if $f$ is a weak kernel of $g$ and $g$ is a weak cokernel of $f$.
\end{definition}

By definition, in each weak kernel-cokernel pair as above the composition $g \circ f$ is $0,$ so the pair can be understood as an element of the category $\cc^{[0, 2]} (\ca) \hookrightarrow \cc(\ca)$ of complexes over $\ca$ concentrated in the degrees $0, 1$ and $2.$


Let $\cc_{w}(\ca)$ be the full subcategory of  $\cc^{[0, 2]} (\ca)$ with objects being weak kernel-cokernel pairs.

Consider morphisms of complexes in  $\cc_{w}(\ca)$
\begin{align} \label{morphism:wkc}
\xymatrix@R=0.6cm{
*+++{A} \ar@{=}[d]_{1_A} \ar@{->}[r]^{f} &*+++{B} \ar@{->}[d]_{b} \ar@{->}[r]^{g}  &*+++{C} \ar@{=}[d]_{1_C} \\
*+++{A}  \ar@{->}[r]^{f'} &*+++{B'} \ar@{->}[r]^{g'}  &*+++{C}
}
\end{align}
with leftmost and rightmost vertical morphisms being identities. 

\begin{lemma} \label{lemma:equivalence:wkc}
For a diagram of the form (\ref{morphism:wkc}), the following are equivalent:
\begin{itemize}
\item The morphism $h^{\bullet} = (1_A, b, 1_C)$ is an isomorphism in $\cc_{w}(\ca);$
\item The morphism $b$ is an isomorphism;
\item The morphism $h^{\bullet}$ is a homotopy equivalence in $\cc^{[0, 2]}(\ca)$. 
\end{itemize}
\end{lemma}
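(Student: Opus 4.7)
The plan is to dispatch (1) $\Leftrightarrow$ (2) by direct inspection, note that (2) $\Rightarrow$ (3) is formal, and then concentrate on (3) $\Rightarrow$ (2), which carries all the content. For (1) $\Leftrightarrow$ (2), observe that if $b$ is invertible then the relations $bf = f'$ and $g'b = g$ from the diagram force $b^{-1}f' = f$ and $g' = g b^{-1}$, so $(1_A, b^{-1}, 1_C)$ defines a strict inverse of $f^{\bullet}$ in $\cc_w(\ca)$; conversely, any inverse $(a'', b'', c'')$ of $f^{\bullet}$ must satisfy $a'' = 1_A$ and $c'' = 1_C$ in the outer degrees, whence $b''$ is a two-sided inverse of $b$. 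The implication (2) $\Rightarrow$ (3) is immediate, since a strict isomorphism of complexes is a homotopy equivalence with vanishing homotopies.

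For (3) $\Rightarrow$ (2), let $g^{\bullet} = (a', b', c')$ be a homotopy inverse of $f^{\bullet}$, with homotopies $(h^1 : B \to A,\ h^2 : C \to B)$ witnessing $g^{\bullet} f^{\bullet} \simeq \mathrm{id}$ and $(k^1 : B' \to A,\ k^2 : C \to B')$ witnessing $f^{\bullet} g^{\bullet} \simeq \mathrm{id}$. Unfolding the homotopy relations degree by degree yields
\[ a' - 1_A = h^1 f = k^1 f', \quad b'b - 1_B = fh^1 + h^2 g, \quad bb' - 1_{B'} = f'k^1 + k^2 g', \quad c' - 1_C = gh^2 = g'k^2. \]
From $h^1 f = k^1 f' = k^1 bf$ one reads off $(h^1 - k^1 b)f = 0$, so the weak cokernel property of $f$ (with cokernel $g$) supplies $u : C \to A$ with $h^1 = k^1 b + ug$. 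Substituting and using $g = g'b$, the middle equation rewrites as
\[ b'b = 1_B + \bigl( fk^1 + (fu + h^2)g' \bigr) b, \]
so $b_L := b' - fk^1 - (fu + h^2)g'$ is a left inverse of $b$. Dually, $gh^2 = g'k^2$ combined with $g = g'b$ yields $g'(bh^2 - k^2) = 0$, and the weak kernel property of $f'$ provides $v : C \to A$ with $k^2 = bh^2 - f'v$; a parallel manipulation of $bb' = 1_{B'} + f'k^1 + k^2 g'$, using $f' = bf$, produces the right inverse $b_R := b' - fk^1 - h^2 g' + fvg'$ of $b$. Having both a left and a right inverse, $b$ is invertible.

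The only real obstacle is the passage from a homotopy equivalence to a strict two-sided inverse of the middle component. The weak-kernel and weak-cokernel properties of the rows are used precisely to rewrite the homotopy error terms $fh^1$ and $h^2 g$ as morphisms that either factor through $b$ on the right or can be post-composed with $b$ on the left, so that the candidate $b'$ can be corrected by subtracting these pieces to yield an honest inverse. No stronger property of the pairs $(f,g)$ and $(f',g')$ than being weak kernel–cokernel pairs enters the argument.
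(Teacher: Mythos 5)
Your proof is correct. The paper itself does not spell out an argument here: it simply defers to \cite[Lemma 4.1]{HLN} (see also \cite[Claim 2.8]{HLN}), so your proposal supplies the content that the paper outsources. The equivalence of the first two items and the implication from invertibility of $b$ to homotopy equivalence are handled exactly as one would expect, and the substantive direction is your passage from a homotopy inverse $(a',b',c')$ to a genuine two-sided inverse of $b$. I checked the degreewise homotopy identities you list, the factorization $h^1-k^1b=ug$ through the weak cokernel $g$ of $f$, the factorization $bh^2-k^2=f'v$ through the weak kernel $f'$ of $g'$, and the resulting left and right inverses $b_L=b'-fk^1-(fu+h^2)g'$ and $b_R=b'-fk^1-h^2g'+fvg'$; both compositions do collapse to the identity using $g=g'b$ and $f'=bf$, and a morphism with a left and a right inverse is invertible. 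Your closing remark is also the right one: only the weak kernel--cokernel property of the two rows is used, which is exactly why the lemma holds at this level of generality. The only thing your write-up loses relative to the citation route is economy; what it buys is a self-contained verification that the HLN argument survives verbatim in the weakly extriangulated setting, which is in the spirit of how the rest of this paper adapts proofs from the literature.
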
 

Here by homotopy equivalence in $\cc^{[0, 2]}(\ca)$ we mean that there exists a morphism $k^{\bullet}$ in $\cc^{[0, 2]}(\ca)$ and morphisms 
$$\phi_1: B \to A, \  \phi_2: C \to B, \ \psi_1: B' \to A, \ \psi_2: C \to B'$$ 
such that the pair $(\phi_1, \phi_2)$ yields a chain homotopy $k^{\bullet} \circ h^{\bullet} \sim 1$ and the pair $(\psi_1, \psi_2)$ yields a chain homotopy $h^{\bullet} \circ k^{\bullet} \sim 1.$

\begin{proof}
This is a reformulation of \cite[Lemma 4.1]{HLN}, see also \cite[Claim 2.8]{HLN}.
\end{proof}

Morphisms $h^{\bullet} = (1_A, b, 1_C)$ satisfying either of conditions in Lemma \ref{lemma:equivalence:wkc} define an equivalence relation on objects in $\cc_{w}(\ca).$
We denote by $[A \overset{f}\to B \overset{g}\to C]$ the equivalence class of the complex $A \overset{f}\to B \overset{g}\to C$ in $\cc_w(\ca)$ under this equivalence.

\begin{definition} (cf. \cite[Definition 2.22]{HLN})
Let $\mathfrak{s}$ be a correspondence which associates an equivalence class $\mathfrak{s}(\delta) = [A \overset{f}\to B \overset{g}\to C]$ 
in $\cc_{(\ca)}$ to each extension $\delta = {}_A\delta_C$. Such $\mathfrak{s}$ is called a \emph{realization}
of $\mathbb{E}$ if it satisfies the following condition for any $\mathfrak{s}(\delta) = [A \overset{f}\to B \overset{g}\to C]$
and any $\mathfrak{s}(\rho) = [A' \overset{f'}\to B' \overset{g'}\to C']:$
\begin{itemize}
\item[(R0)] 
For any morphism of extensions $(a, c): \delta \to \rho,$ there exists a morphism $b: B \to B'$ such that
$h^{\bullet} = (a, b, c)$ is a morphism in $\cc^{[0, 2]}(\ca):$

\begin{align*} 
\xymatrix@R=0.3cm{
*+++{A} \ar@{=}[dd]_{1_A} \ar@{->}[rr]^{f} & &*+++{B} \ar@{->}[dd]_{b} \ar@{->}[rr]^{g}  & &*+++{C} \ar@{=}[dd]_{1_C} \\
& \circlearrowright & & \circlearrowright & \\
*+++{A}  \ar@{->}[rr]^{f'} & &*+++{B'} \ar@{->}[rr]^{g'}  & &*+++{C.}
}
\end{align*}
Such $h^{\bullet}$
is called a
\emph{lift} of $(a, c).$
\end{itemize}

We say that $[A \overset{f}\to B \overset{g}\to C]$
\emph{realizes} $\delta$ whenever we have $\mathfrak{s}(\delta) = [A \overset{f}\to B \overset{g}\to C].$
\end{definition}

Each weak kernel-cokernel pair $A \overset{f}\to B \overset{g}\to C$ realizing an extension $\delta$  induces a pair of sequences of functors
\begin{align} \label{transformations}
\Hom(C, -) \to \Hom(B, -) \to \Hom(A, -) \to \mathbb{E}(C, -);\\
\Hom(-, A) \to \Hom(-, B) \to \Hom(-, C) \to \mathbb{E}(-, A).
\end{align}

\begin{definition}(cf. \cite[Definition 2.22]{HLN})

A realization $\mathfrak{s}$ is called \emph{exact} if the following two conditions are satisfied:
\begin{itemize}
    \item[(R1)] For each extension $\delta$, for each $A \overset{f}\to B \overset{g}\to C$ realizing $\delta$, both sequences (\ref{transformations}) are exact (i.e. exact when applied to each object in $\ca$);
    \item[(R2)] For each object $A \in \ca,$ we have
    $$\mathfrak{s}({}_A{0}_0) = [A \overset{1_A}\to A \to 0], \ \ \mathfrak{s}({}_0{0}_A) = [0 \to A \overset{1_A}\to A].$$
\end{itemize}
\end{definition}

\begin{remark}
Note that since we require realizations to be given by weak kernel-cokernel pairs, sequences (\ref{transformations}) are automatically exact at $\Hom(B, -),$ resp. at $\Hom(-, B).$ In other words, condition (R1) concerns only exactness at $\Hom(A, -),$ resp. at $\Hom(-, C).$ 
\end{remark}

\begin{remark}
By \cite[Proposition 2.16]{HLN}, condition (R1) does not depend on the choice of a representative in the equivalence class $\mathfrak{s}(\delta).$
\end{remark}

\begin{definition} (\cite[Definition 2.23]{HLN}, \cite[Definition 2.15, Definition 2.19]{NakaokaPalu1})
Let $\mathfrak{s}$ be an exact realization of $\mathbb{E}.$ Pairs $\delta, \mathfrak{s}(\delta)$ are called (\emph{distinguished}) $\mathbb{E}-$triangles. If a complex 
$$A \overset{f}\to B \overset{g}\to C$$
is a representative in $\mathfrak{s}(\delta)$ for some $\delta,$ it is called a \emph{conflation}. In this case, the morphism $f$ is called an \emph{inflation} and the morphism $g$ is called a \emph{deflation}.
\end{definition}

\begin{lemma}(\cite[Proposition 3.2]{HLN})
The class of conflations and the class of $\mathbb{E}-$triangles are both closed under direct sums and direct summands.
\end{lemma}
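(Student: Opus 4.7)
The plan is to use axiom (R0), the biproduct structure, and Lemma \ref{lemma:equivalence:wkc}, together with the weak kernel/cokernel factorisation inherent in any realisation.

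For direct sums, let $h_i^\bullet := (A_i \xrightarrow{f_i} B_i \xrightarrow{g_i} C_i)$ be conflations realising $\delta_i \in \mathbb{E}(C_i, A_i)$ for $i = 1, 2,$ and define
\[
\delta_1 \boxplus \delta_2 := (\iota_{A_1})_*(\pi_{C_1})^*\delta_1 + (\iota_{A_2})_*(\pi_{C_2})^*\delta_2 \in \mathbb{E}(C_1 \oplus C_2, A_1 \oplus A_2),
\]
using the biproduct inclusions $\iota$ and projections $\pi$. The direct sum complex $h_1^\bullet \oplus h_2^\bullet$ is again a weak kernel-cokernel pair, since direct sums preserve weak kernels and cokernels via the additivity of $\Hom$. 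Fix a representative $A_1 \oplus A_2 \to B_\tau \to C_1 \oplus C_2$ of $\mathfrak{s}(\delta_1 \boxplus \delta_2)$. A short calculation with the biproduct identities shows that the canonical pairs $(\iota_{A_i}, \iota_{C_i}): \delta_i \to \delta_1 \boxplus \delta_2$ and $(\pi_{A_i}, \pi_{C_i}): \delta_1 \boxplus \delta_2 \to \delta_i$ are morphisms of extensions. By (R0), I lift each of them to a morphism of complexes and assemble the middle maps into $\beta = [\beta_1,\, \beta_2]: B_1 \oplus B_2 \to B_\tau$ and $\gamma: B_\tau \to B_1 \oplus B_2$ with components $\gamma_1, \gamma_2$.

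The key claim is that $(1, \beta, 1)$ is a homotopy equivalence; by Lemma \ref{lemma:equivalence:wkc} this upgrades to an isomorphism in $\cc_w(\ca)$. The composition $\beta \gamma$ and $1_{B_\tau}$ are both lifts of the identity extension morphism of $\delta_1 \boxplus \delta_2$, and entrywise $\gamma_i \beta_j$ lifts the extension morphism $\delta_j \to \delta_i$ induced by $\pi_{X_i} \iota_{X_j}$, which equals the identity when $i = j$ and vanishes when $i \neq j$. The weak cokernel property of the $g$'s and the weak kernel property of the $f$'s (built into any realisation) allow the difference between any two lifts of a single extension morphism to be written as $f'\phi_1 + \phi_2 g$ for suitable partial factorisations. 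Assembling four such factorisations across the block decomposition produces chain homotopies between $\gamma \beta$ and $1_{B_1 \oplus B_2}$, and between $\beta \gamma$ and $1_{B_\tau}$, yielding the desired homotopy equivalence. Hence $h_1^\bullet \oplus h_2^\bullet$ belongs to $\mathfrak{s}(\delta_1 \boxplus \delta_2)$, establishing closure under direct sums for conflations and, by construction, for $\mathbb{E}$-triangles as well.

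For direct summands, suppose a conflation realising $\delta$ decomposes as $h_1^\bullet \oplus h_2^\bullet$ in $\cc_w(\ca)$, and set $\zeta_i := (\pi_{A_i})_*(\iota_{C_i})^*\delta$. The block-diagonal idempotent endomorphisms of $h_1^\bullet \oplus h_2^\bullet$ are morphisms of complexes, so their outer data $(e_A, e_C)$ define morphisms of extensions $\delta \to \delta$; decomposing the resulting identity $(e_A)_*\delta = (e_C)^*\delta$ across the four summands of $\mathbb{E}(C_1 \oplus C_2, A_1 \oplus A_2)$ forces the off-diagonal components of $\delta$ to vanish, so $\delta = \zeta_1 \boxplus \zeta_2$. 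Applying the direct-sum case above to any representatives $h_i'^\bullet$ of $\mathfrak{s}(\zeta_i)$ yields a representative $h_1'^\bullet \oplus h_2'^\bullet$ of $\mathfrak{s}(\delta)$; by Lemma \ref{lemma:equivalence:wkc} there is an isomorphism $(1, b, 1): h_1^\bullet \oplus h_2^\bullet \to h_1'^\bullet \oplus h_2'^\bullet$. Writing $b = (b_{ij})$ in block form, the chain-map conditions force $b_{ij} f_j = 0$ and $g_i' b_{ij} = 0$ for $i \neq j$, so the block-diagonal part $b' := b_{11} \oplus b_{22}$ is itself a chain map and differs from $b$ by a null-homotopic morphism. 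Hence $(1, b', 1)$ is also an isomorphism by Lemma \ref{lemma:equivalence:wkc}, and each block $b_{ii}$ is therefore an isomorphism $h_i^\bullet \to h_i'^\bullet$ in $\cc_w(\ca)$, exhibiting $h_i^\bullet$ as a conflation realising $\zeta_i$. The main obstacle is the assembly of the chain homotopies in the direct-sum step: one must arrange the four entrywise factorisations coherently so that the resulting homotopy satisfies the boundary conditions $\phi_1 f = 0$ and $g' \phi_2 = 0$ required for a genuine chain homotopy in $\cc^{[0,2]}(\ca)$.
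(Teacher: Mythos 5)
First, note that the paper offers no proof of this lemma at all: it is imported verbatim from \cite[Proposition 3.2]{HLN}, so your argument has to stand on its own. In the direct-sum half, your construction of $\delta_1\boxplus\delta_2$, of the comparison maps $\beta$ and $\gamma$ via (R0), and the verification that the canonical injections and projections are morphisms of extensions are all correct. But the step you yourself label ``the main obstacle'' is a genuine gap, and the homotopies you want cannot be assembled from the factorisations you have. A chain homotopy between $(1,\gamma\beta,1)$ and the identity of $h_1^\bullet\oplus h_2^\bullet$ requires $\phi_1,\phi_2$ with $\gamma\beta-1=(f_1\oplus f_2)\phi_1+\phi_2(g_1\oplus g_2)$ \emph{and} $\phi_1(f_1\oplus f_2)=0$, $(g_1\oplus g_2)\phi_2=0$; from $(\gamma\beta-1)(f_1\oplus f_2)=0$ the weak-cokernel property only yields $\gamma\beta-1=\psi(g_1\oplus g_2)$ with $(g_1\oplus g_2)\psi(g_1\oplus g_2)=0$, which does not give $(g_1\oplus g_2)\psi=0$ because $g_1\oplus g_2$ need not be epic. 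The correct and much shorter route avoids homotopies entirely: since $\gamma\beta$ and $1$ are both lifts of the identity morphism of extensions, $d:=\gamma\beta-1$ satisfies $d(f_1\oplus f_2)=0$ and $(g_1\oplus g_2)d=0$, hence $d=\psi(g_1\oplus g_2)$ and $d^{2}=\psi(g_1\oplus g_2)d=0$, so $\gamma\beta$ is invertible; the same square-zero argument applies to $\beta\gamma-1$, so $\beta$ is an isomorphism and the ``$b$ is an isomorphism'' clause of Lemma \ref{lemma:equivalence:wkc} finishes the direct-sum case.

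The direct-summand half fails earlier: the intermediate claim $\delta=\zeta_1\boxplus\zeta_2$ is false. Your justification assumes that the outer components $(e_A,e_C)$ of the block-diagonal idempotent endomorphism of the complex constitute a morphism of extensions $\delta\to\delta$; but (R0) only asserts that morphisms of extensions admit lifts to morphisms of complexes, not that every morphism of complexes between realizations arises as such a lift. Concretely, in the weakly extriangulated structure carried by a triangulated category, the complex $A\oplus A\to 0\to A[1]\oplus A[1]$ is the direct sum of two copies of the conflation $A\to 0\to A[1]$ and realizes the extension $\delta=\bigl(\begin{smallmatrix}1&1\\0&1\end{smallmatrix}\bigr)\in\Hom(A[1]^{\oplus 2},A[1]^{\oplus 2})=\mathbb{E}(A[1]^{\oplus 2},A^{\oplus 2})$, which is not of the form $\zeta_1\boxplus\zeta_2$; one checks directly that $(e_A)_{\ast}\delta=\bigl(\begin{smallmatrix}1&1\\0&0\end{smallmatrix}\bigr)\neq\bigl(\begin{smallmatrix}1&0\\0&0\end{smallmatrix}\bigr)=(e_C)^{\ast}\delta$ for the first block idempotent. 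The conclusion of the lemma still holds in this example (each summand realizes $1_{A[1]}$), but everything in your argument after the decomposition of $\delta$ --- the identification of $\mathfrak{s}(\delta)$ with $\mathfrak{s}(\zeta_1\boxplus\zeta_2)$ and the subsequent block analysis of $b$ --- rests on that false equality, so the direct-summand case needs a genuinely different argument; this is precisely the delicate part of \cite[Proposition 3.2]{HLN}.
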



\begin{definition} \cite[Definition 2.27]{HLN}
Let $f^{\bullet} = (1_A, b, c)$ be a morphism in $\cc^{[0, 1]}(\ca):$
\begin{align*}
\xymatrix@R=0.6cm{
*+++{A} \ar@{=}[d]_{1_A} \ar@{->}[r]^{f} &*+++{B} \ar@{->}[d]_{b} \ar@{->}[r]^{g}  &*+++{C} \ar@{->}[d]_{c} \\
*+++{A}  \ar@{->}[r]^{f'} &*+++{B'} \ar@{->}[r]^{g'}  &*+++{C'.}
}
\end{align*}
Its \emph{mapping cone} $M_{f}^{\bullet}$ is defined to be the complex
\begin{align*}
B \overset{\begin{bmatrix} -g \\ b \end{bmatrix}}\to C \oplus B' \overset{\begin{bmatrix} c \ g' \end{bmatrix}}\to C'.
\end{align*}
In other words, this is the usual mapping cone of the morphism of complexes
\begin{align*}
\xymatrix@R=0.6cm{
*+++{B} \ar@{->}[d]_{b} \ar@{->}[r]^{g}  &*+++{C} \ar@{->}[d]_{c} \\
*+++{B'} \ar@{->}[r]^{g'}  &*+++{C'.}
}
\end{align*}
The \emph{mapping cocones} (cylinder) of morphisms of the form $(a, b, 1_C)$ are defined dually.
\end{definition}

\begin{definition} (\cite[Definition 2.32 for $n = 1$]{HLN})
A 1-exangulated category is a triplet $(\ca, \mathbb{E}, \mathfrak{s})$ of an additive category $\ca$, additive bifunctor $\mathbb{E}: \ca^{op} \times \ca \to \Ab$, and its exact realization $\mathfrak{s}$, satisfying
the following conditions.
\begin{itemize}
    \item[(EA1)] The composition of two inflations is an inflation. Dually, the composition of two deflations is a deflation.
    \item[(EA2)]  For each $\rho \in \mathbb{E}(C', A)$ and $c \in \Hom(C, C'),$ for each pair of realizations 
    $A \overset{f}\to B \overset{g}\to C$ of $c^{\ast} \rho$ and $A \overset{f'}\to B' \overset{g'}\to C'$ of $\rho,$ the morphism $(1_A, c): c^{\ast} \rho \to \rho$ admits a \emph{good lift} $f^{\bullet} = (1_A, b, c)$, in the sense that $M_{f}^{\bullet}$ realizes $f_{\ast} \rho.$
   \item[(EA2)$^{op}$] Dual of (EA2).
\end{itemize}
\end{definition}

\begin{proposition}(\cite[Proposition 4.3]{HLN})
A triplet $(\ca, \mathbb{E}, \mathfrak{s})$ is a 1-exangulated category if and only if it is an extriangulated category as defined by Nakaoka and Palu \cite{NakaokaPalu1}.
\end{proposition}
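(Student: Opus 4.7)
The plan is to verify the equivalence axiom by axiom, relying on the fact that the underlying data---an additive category $\ca$, an additive bifunctor $\mathbb{E}\colon \ca^{op}\times\ca\to\Ab$, and an additive exact realization $\mathfrak{s}$---is common to both formulations. First I would match the purely ``lifting'' axioms: axiom $(R0)$, which supplies a middle term $b$ for every morphism of extensions $(a,c)\colon \delta\to\rho$, is equivalent to Nakaoka--Palu's axioms $(ET3)$ and $(ET3)^{op}$ applied to commutative squares of inflations and deflations. Axiom $(R2)$ encodes exactly the split-triangle condition present in the Nakaoka--Palu axiom $(ET2)$, and $(R1)$ reproduces the long exact sequences that Nakaoka--Palu derive from the combination of $(ET3)$, $(ET3)^{op}$, $(ET4)$ and $(ET4)^{op}$. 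The closure of inflations and deflations under composition $(EA1)$ matches part of $(ET4)$ and $(ET4)^{op}$ verbatim.

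The main obstacle, and the genuine content of the proposition, is the comparison of $(EA2)$/$(EA2)^{op}$ with the full force of $(ET4)$/$(ET4)^{op}$. Starting from an extriangulated category, given $\rho\in\mathbb{E}(C',A)$ and $c\colon C\to C'$ together with realizations of $c^{\ast}\rho$ and $\rho$, I would apply $(ET4)^{op}$ to the composable deflations produced by the two conflations. The resulting $3\times 3$ diagram identifies the middle term $b$ of the desired good lift $(1_A,b,c)$ in its middle row, while its top row is precisely the mapping cone $B\to C\oplus B'\to C'$ realizing $f_{\ast}\rho$. Conversely, starting from a $1$-exangulated category, the mapping cone datum provided by $(EA2)$ together with $(EA1)$ and $(R0)$ assembles into the $3\times 3$ diagram required by $(ET4)^{op}$, and dually $(EA2)^{op}$ yields $(ET4)$.

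The technically delicate point, and where I expect to spend most of the effort, is well-definedness on equivalence classes: realizations $\mathfrak{s}(\delta)$ are only determined up to the equivalence on $\cc_w(\ca)$ recorded in Lemma \ref{lemma:equivalence:wkc}. I would therefore verify that the translation between good lifts with mapping cone data and Nakaoka--Palu's $3\times 3$ diagrams is independent of the chosen representatives, using the compatibility of the mapping cone construction with chain homotopies in $\cc^{[0,2]}(\ca)$ and the representative-independence of the long exact sequences provided by $(R1)$. Once this well-definedness is established, the axiom-by-axiom correspondence outlined above yields the equivalence of the two notions, following in essence the argument of \cite[Section 4, Proposition 4.3]{HLN}.
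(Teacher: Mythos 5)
The paper offers no proof of this proposition: it is quoted verbatim from \cite[Proposition 4.3]{HLN}, so there is no internal argument to compare against. Your outline --- matching $(R0)$ with $(ET3)$/$(ET3)^{op}$ and $(R2)$ with the additivity of the realization, observing that $(R1)$ is an axiom on one side but a consequence of the axioms on the other, and locating the real content in the equivalence of $(EA1)$+$(EA2)$+$(EA2)^{op}$ with $(ET4)$+$(ET4)^{op}$ via mapping cones, $3\times 3$ diagrams, and representative-independence --- is a faithful summary of the strategy actually carried out in Section 4 of \cite{HLN}, which is precisely the source the paper defers to.
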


This result motivates the following definition.

\begin{definition}\label{weakly $1-$exangulated structures}
A \emph{weakly extriangulated} (= \emph{weakly $1-$exangulated}) structure on an additive category $\ca$ is  a pair $(\mathbb{W}, \mathfrak{s})$ of an additive bifunctor $\mathbb{W}: \ca^{op} \times \ca \to \Ab$ and its exact realization $\mathfrak{s}$ satisfying axioms (EA2) and (EA2)$^{op}$. 

\end{definition}

Let $(\ca, \mathbb{W}, \mathfrak{s})$ be a weakly extriangulated structure. Assume that $\mathbb{W'}$ is an additive sub-bifunctor of $\mathbb{W}.$ Consider the restriction $\mathfrak{s}|_{\mathbb{W}'}$ of the realization $\mathfrak{s}$ on $\coprod\limits_{c,a \in \ca} \mathbb{W}'(c, a).$ The following immediately follows from the definitions. The case of $(\mathbb{W}, \mathfrak{s})$ extriangulated was considered in \cite[Claim 3.8]{HLN}.

\begin{lemma} \label{wex_substr}
$(\mathbb{W'}, \mathfrak{s}|_{\mathbb{W}'})$ is a weakly extriangulated structure on $\ca.$
\end{lemma}

We say that $(\mathbb{W'}, \mathfrak{s}|_{\mathbb{W}'})$ is a \emph{weakly extriangulated substructure} of  $(\mathbb{W}, \mathfrak{s})$.

\begin{lemma}
A weakly exact structure $\cW$ on $\ca$ defines a weakly extriangulated structure $(\ca, \mathbb{W}, \mathfrak{s}).$
\end{lemma}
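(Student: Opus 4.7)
The plan is to construct $(\mathbb{W}, \mathfrak{s})$ explicitly and verify each axiom. Set $\mathbb{W} := \Ext^1_{\cW}(-, -)$, which is an additive bifunctor by Proposition~\ref{proposition:bifunctor}. For each $\mathbb{W}$-extension $\delta = \overline{(i, d)} \in \mathbb{W}(C, A)$, put
\[
\mathfrak{s}(\delta) := [A \xrightarrow{i} B \xrightarrow{d} C],
\]
the equivalence class in $\cc_w(\ca)$ of the underlying kernel-cokernel pair. This is well-defined: any kernel-cokernel pair is in particular a weak kernel-cokernel pair, and two representatives of $\delta$ differ by an isomorphism of middle terms, hence by Lemma~\ref{lemma:equivalence:wkc} become equivalent in $\cc_w(\ca)$.

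I would then verify that $\mathfrak{s}$ is an exact realization. Axiom (R0) follows from the equality $a_*\delta = c^*\rho$ together with (E2) and the universal property of the pushout, which supplies a chain-map lift $b: B \to B'$. Axiom (R1) at the middle positions of the six-term sequences is immediate from the identities $i = \ker d$ and $d = \coker i$; at the outer positions, the needed exactness is the standard fact that $a_*\delta = 0$ iff $a$ factors through $i$, obtained from a splitting of the pushout. Axiom (R2) is immediate since the split sequences $A \xrightarrow{1_A} A \to 0$ and $0 \to A \xrightarrow{1_A} A$ lie in $\cW$ by (E0) and (E0)$^{op}$.

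The main step, and the main obstacle, is axiom (EA2); the dual (EA2)$^{op}$ follows by the same argument. Let $\rho \in \mathbb{W}(C', A)$ be realized by $A \xrightarrow{f'} B' \xrightarrow{g'} C'$, let $c: C \to C'$, and let $A \xrightarrow{f} B \xrightarrow{g} C$ realize $c^*\rho$. Since $c^*\rho$ is computed by pullback along $c$, the object $B$ carries the structure of a pullback of $g'$ along $c$, and this gives a canonical lift $b: B \to B'$ with $bf = f'$ and $g'b = cg$. I then claim that the mapping cone
\[
B \xrightarrow{\binom{-g}{b}} C \oplus B' \xrightarrow{(c\ g')} C'
\]
is a conflation in $\cW$ that realizes $f_*\rho$. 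The map $\binom{-g}{b}$ is a kernel of $(c\ g')$ in $\ca$, since $\ker(c\ g')$ is the pullback of $c$ and $-g'$, identified with $B$ via sign flip on the second component. Moreover, $(c\ g') \circ \binom{0}{1} = g'$ is an admissible epic of $\cW$ and $(c\ g')$ has a kernel, so Quillen's obscure axiom (Proposition~\ref{prop:obscure-axiom}(2)) forces $(c\ g')$ to be an admissible epic; the mapping cone is therefore a conflation in $\cW$. Finally, $(f, \binom{0}{1}, 1_{C'})$ defines a morphism of conflations from $\rho$ to the mapping cone (using $gf = 0$ and $bf = f'$), and the defining pushout property of $f_*\rho$ identifies this with $f_*\rho$, completing the verification.
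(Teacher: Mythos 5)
Your proof is correct and follows essentially the same route as the paper, which simply cites Nakaoka--Palu's Example 2.13 together with Lemma \ref{prop-2.12} and its dual for the axioms (EA2) and (EA2)$^{op}$; you have written out explicitly the verifications that the paper delegates to that reference. The only small divergence is that you show the mapping cone is a conflation via Quillen's obscure axiom (Proposition \ref{prop:obscure-axiom}), whereas the paper's intended argument is the dual of Lemma \ref{prop-2.12}, (i)$\Rightarrow$(ii), applied to the bicartesian square defining $c^{\ast}\rho$ --- both are valid.
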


\begin{proof}
Using Lemma \ref{factorization},
all the arguments from \cite[Example 2.13]{NakaokaPalu1}, except for those concerning (ET4) and (ET4)$^{\text{op}}$, apply here word for word. That means that a weakly exact structure defines a pair of a bifunctor and its exact realization. Axioms (EA2) and (EA2)$^{op}$ follow directly from axioms (E2) and  (E2)$^{\text{op}}$ combined with Lemma \ref{prop-2.12} and its dual.
\end{proof}


We can also characterize weakly exact structures among weakly extriangulated ones.

\begin{lemma} (cf. \cite[Corollary 3.18]{NakaokaPalu1})
Let $(\ca, \mathbb{W}, \mathfrak{s})$ be a weakly extriangulated category, in which each inflation
is monomorphic, and each deflation is epimorphic. If we let $\cW$ be the class of
conflations given by the $\mathbb{W}-$triangles, then $(\ca, \cW)$ is a weakly exact
category.
\end{lemma}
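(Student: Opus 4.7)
The plan is to verify each axiom of Definition \ref{set of weakly exact} for the class $\cW$ of conflations, most of which will be immediate. Since every inflation is a weak kernel by axiom (R1) and a monomorphism by hypothesis, it is a genuine kernel; dually every deflation is a cokernel, so conflations are kernel-cokernel pairs. Closure under isomorphism follows from the fact that $\mathfrak{s}$ outputs equivalence classes under the relation of Lemma \ref{lemma:equivalence:wkc}; closure under direct sums was already recorded in the excerpt via \cite[Proposition 3.2]{HLN}; and axioms (E0), (E0)$^{op}$ are nothing but realization axiom (R2).

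The substantive content is (E2) and (E2)$^{op}$. For (E2), given a conflation $A \to B \to C$ with inflation $i$ and deflation $d$, realizing some $\delta \in \mathbb{W}(C, A)$, and a morphism $t \colon A \to A'$, I would form $t_{\ast}\delta \in \mathbb{W}(C, A')$ and choose any realization $A' \to B' \to C$ with inflation $i'$ and deflation $d'$. Since $(t, 1_C)$ is a morphism of extensions $\delta \to t_{\ast}\delta$, axiom (EA2)$^{op}$ supplies a good lift $(t, b, 1_C)$, producing a commutative diagram whose two rows are conflations. It then remains to argue that its top-left square is a pushout in $\ca$; the morphism $i'$ is automatically an inflation of $\cW$ by construction, so this yields (E2). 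Axiom (E2)$^{op}$ follows symmetrically by applying (EA2) and the hypothesis that deflations are epimorphic.

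The main obstacle is this final step: extracting a genuine pushout from the good-lift property. The mapping-cocone description in axiom (EA2)$^{op}$ gives a kind of intrinsic bicartesianness (compare Lemma \ref{prop-2.12}(ii), which characterizes pushouts in weakly exact categories by means of a conflation $A \to B \oplus A' \to B'$); the hypothesis that $i$ is a monomorphism is precisely what promotes this intrinsic condition to a genuine universal property in $\ca$. The argument should mirror that of \cite[Corollary 3.18]{NakaokaPalu1} in the extriangulated setting, with the simplification that we need not invoke (EA1), since weak exactness does not require closure of inflations under composition.
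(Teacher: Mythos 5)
Your proposal is correct and follows essentially the same route as the paper: conflations become genuine kernel--cokernel pairs from the mono/epi hypotheses, (E0), (E0)$^{op}$, isomorphism-closure and direct sums come from the exact realization, and (E2), (E2)$^{op}$ are extracted from (EA2)$^{op}$, (EA2) via the bicartesian-square criterion of Lemma \ref{prop-2.12}(ii) applied to the mapping (co)cone conflation $A\to B\oplus A'\to B'$ --- which is precisely the bridge the paper's proof cites ("by Lemma \ref{prop-2.12} and its dual"). The step you flag as the "main obstacle" is handled exactly this way in the paper, so no gap remains.
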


\begin{proof}
If an inflation in a conflation is monomorphic, it is not just a weak kernel of the deflation, but \emph{the} actual kernel. Similarly, if a deflation is epimorphic, it is the cokernel of an inflation. Therefore, if each inflation
is monomorphic, and each deflation is epimorphic, all conflations are kernel-cokernel pairs. From the exactness of the realization, it follows that the class of conflations is closed under direct sums and axioms (E0) and (E0)$^{op}$ are satisfied. Axioms (EA2) and (EA2)$^{op}$ imply the axioms (E2) and (E2)$^{op}$ by Lemma \ref{prop-2.12} and its dual.
\end{proof}

Breaz and Modoi \cite{BM} introduced the notions of \emph{almost exact structures} on full extension-closed subcategories $\mathcal{A}$ of triangulated categories $\mathcal{T}$ in terms of \emph{proper classes of triangles} (generalizing work of Beligiannis \cite{Bel}) and of \emph{phantom $\mathcal{A}$ ideals} of morphisms in $\mathcal{T}$. They found \cite[Proposition 2.2.4]{BM} a bijection between almost exact structures on $\mathcal{A}$ and phantom $\mathcal{A}-$ideals. 


\begin{lemma}
Each pair of a phantom $\ca-$ideal in $\mathcal{T}$ and the corresponding proper class of triangles yields a weakly extriangulated structure on $\mathcal{A}.$
\end{lemma}

\begin{proof}
This follows from \cite[Remark 2.2.3 (ii), Proposition 2.2.4]{BM}, the fact that $\ca$ is extriangulated with the structure induced by that of $\ct$, and Lemma \ref{wex_substr}.
\end{proof}


\section{Defects and topologizing subcategories}

We extend the notion of contravariant defects to the setting of weakly extriangulated categories. These categories were used in \cite{Buan, En18, Enomoto3, FG} to classify exact structures on an additive category and, more generally, extriangulated substructures of an extriangulated structure. We show that their results extend to our framework.

\begin{definition}
Let $\ca$ be an essentially small additive category. Contravariant additive functors $\cA^{op} \to \Ab$ to the category of abelian groups are called \emph{right $\ca-$modules}. They form an abelian category $\Mod \ca.$ Dually, \emph{left $\ca-$modules} are covariant additive functors to abelian groups, they form an abelian category that can be seen as $\Mod \ca^{op}.$ 
\end{definition}
These categories have enough projectives. Those are precisely the direct summands of direct sums of representable functors $\Hom(-, A) \in \Mod \ca,$ resp. $\Hom(A, -) \in \Mod \ca^{op}.$ 

\begin{definition}
An $\cA$-module $M$ is called 
\emph{coherent} if it is finitely presented and each of its finitely generated submodule is also finitely presented. Note that every finitely generated submodule of a coherent module is automatically coherent.
\end{definition}

By definition, we have a chain of embeddings of full additive categories
$$\coh(\ca) \hookrightarrow \fp(\ca) \hookrightarrow \fg(\ca) \hookrightarrow \Mod \ca,$$

where the first three categories are the categories of coherent, finitely presented and finitely generated right $\ca-$modules, respectively.

The category of finitely presented modules $\fp(\ca)$ is known to be abelian if and only if the category $\ca$ has weak kernels. The category of coherent modules behaves better, as the following standard fact shows:

\begin{proposition} (\cite[Proposition 1.5]{Herzog97}, see also \cite[Appendix B]{Fiorot16})
The category $\coh(\ca)$ is abelian and the canonical embedding $\coh(\ca) \hookrightarrow \Mod \ca$ is exact. In particular, $\coh(\ca)$ is closed under kernels, cokernels and extensions in $\Mod(\ca)$
\end{proposition}

Two more important full subcategories of categories of modules over abelian categories have been studied thoroughly since 1950s and 1960s: the category of effaceable functors, 
and the category of defects introduced by Auslander. 
These notions have been generalized to the  setting of exact categories (see e.g. \cite{Ke, Fiorot16, En18}) and, by Ogawa \cite{Ogawa} and Enomoto \cite{Enomoto3}, to that of extriangulated categories. Ogawa's definition uses only part of the axioms of extriangulated categories, and so we can formulate it in our broader context.

Let $(\cA, \mathbb{W}, \mathfrak{s})$ be a weakly extriangulated category.

\begin{definition}
We say that a module $F \in \Mod \ca$ is \emph{weakly effaceable with respect to $(\mathbb{W}, \mathfrak{s})$} if the following condition is satisfied:

For any $Z \in \ca$ and any $z \in F(Z)$, there exists a deflation $g: Y \twoheadrightarrow Z$ such that $F(g)(z) = 0$.
\end{definition}

\begin{definition}
Given a conflation $X \overset{f}\rightarrowtail Y \overset{g}\twoheadrightarrow Z,$ we define its \emph{contravariant defect} to be the cokernel of $\Hom(-, g): \Hom(-, Y) \to \Hom(-, Z)$ in $\Mod \ca$.
\end{definition}

We denote by $\Eff \mathbb{W}$ the category of weakly effaceable functors and by $\defff \mathbb{W}$ the full subcategory of right $\ca-$modules isomorphic to defects of conflations. If $(\cA, \mathbb{W}, \mathfrak{s})$ corresponds to a weakly exact structure $\cW$ on $\cA$, we also write $\Eff \cW := \Eff \mathbb{W}$ and $\defff \cW := \Eff \cW.$

For abelian categories endowed with maximal exact structures, the following two statements are standard.

\begin{lemma}
The category $\Eff \mathbb{W}$ is closed under subquotients and finite direct sums in $\Mod \ca$.
\end{lemma}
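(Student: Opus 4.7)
The plan is to check the three closure properties separately: under subobjects, under quotients, and under finite direct sums. The first two follow from general facts about functor categories, while the direct sum case is the substantive step that actually requires the weakly extriangulated data $(\mathbb{W}, \mathfrak{s})$.

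First, I would exploit that in the module category $\Mod\ca$ of additive functors $\ca^{op}\to\Ab$, monomorphisms and epimorphisms are evaluated objectwise, since all (co)limits in $\Mod\ca$ are computed objectwise. For a subfunctor $F'\hookrightarrow F$ with $F$ weakly effaceable and $z'\in F'(Z)$, I would view $z'$ as an element of $F(Z)$, apply effaceability of $F$ to produce a deflation $g:Y\twoheadrightarrow Z$ with $F(g)(z')=0$, and invoke naturality together with objectwise injectivity to conclude $F'(g)(z')=0$. For a quotient $\pi:F\twoheadrightarrow F''$ with $F$ weakly effaceable and $z''\in F''(Z)$, I would objectwise lift to some $z\in F(Z)$, efface it by some deflation $g$, and conclude by naturality of $\pi$ that $F''(g)(z'')=0$.

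The main step is closure under finite direct sums, and it is here that the weakly extriangulated axioms enter. Given weakly effaceable $F_1, F_2$ and an element $(z_1, z_2)\in (F_1\oplus F_2)(Z)$, I would first choose deflations $g_i:Y_i\twoheadrightarrow Z$ with $F_i(g_i)(z_i)=0$, arising from conflations realizing extensions $\delta_i\in \mathbb{W}(Z, X_i)$. The key construction is then to form the pullback extension $\Delta^{*}(\delta_1\oplus \delta_2)\in \mathbb{W}(Z, X_1\oplus X_2)$ along the diagonal $\Delta:Z\to Z\oplus Z$, whose chosen realization supplies a conflation $X_1\oplus X_2 \to Y \overset{g}\twoheadrightarrow Z$. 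The morphism of extensions $(1_{X_1\oplus X_2}, \Delta): \Delta^{*}(\delta_1\oplus \delta_2) \to \delta_1\oplus \delta_2$ admits, by axiom (R0), a lift whose middle component $b:Y\to Y_1\oplus Y_2$ has components $b_i:Y\to Y_i$ satisfying $g_i b_i = g$. This yields $F_i(g)(z_i) = F_i(b_i) F_i(g_i)(z_i) = 0$, so $(F_1\oplus F_2)(g)(z_1, z_2) = 0$, and $F_1\oplus F_2$ is weakly effaceable.

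The main obstacle lies precisely in the direct sum case: a more naive attempt to build a common deflation by pulling back $g_1$ along $g_2$ and then composing with $g_2$ would require composing two deflations, which is the content of axiom (EA1) and is not assumed in the weakly extriangulated setting. The pullback-along-the-diagonal trick bypasses (EA1) entirely, relying only on (R0) and on the existence of a realization for $\Delta^{*}(\delta_1\oplus \delta_2)$, both available in our weaker framework.
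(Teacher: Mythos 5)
Your proof is correct, and for the subobject case it coincides with the one the paper actually writes out (take $z$ in the subfunctor, efface it inside the ambient weakly effaceable functor, use objectwise injectivity of the monomorphism); the quotient case is the same objectwise lift-and-push argument the paper waves at. The real difference is in the direct-sum step, which the paper dismisses as ``similar straightforward diagram chasing'' without giving details. There you supply a genuine argument: realizing $\delta_1\oplus\delta_2$ by the direct sum of the two conflations (which is legitimate, since the paper records from \cite[Proposition 3.2]{HLN} that conflations are closed under direct sums), pulling back along the diagonal $\Delta\colon Z\to Z\oplus Z$ to get a single deflation $g\colon Y\twoheadrightarrow Z$, and using (R0) to produce $b_i\colon Y\to Y_i$ with $g_ib_i=g$, whence $F_i(g)(z_i)=F_i(b_i)F_i(g_i)(z_i)=0$. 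This is valid, and your diagnosis of why care is needed is exactly right: the classical argument for effaceable functors produces a common ``annihilating'' epimorphism by pulling back one deflation along the other and composing, and that composition is precisely axiom (EA1), the axiom dropped in the weakly extriangulated setting. So your writeup is not just correct but fills in the one step of the lemma that is not routine in this generality; the paper's proof, read literally, leaves that step to the reader.
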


\begin{proof}
Let $0 \to F \overset{\mu}\to G \overset{\nu}\to H \to 0$
be a short exact sequence in $\Mod \ca.$ Assume that $G$ is weakly effaceable with respect to $(\mathbb{W}, \mathfrak{s})$. Let $Z$ be an object of $\ca$. Choose an element $z \in F(Z)$ and a deflation $f: P \to Z$ such that
$$0 = G(f) \circ \mu(Z)(z) = \mu(P) \circ F(f)(z).$$
Since $\mu$ is monic, $F(f)(z) = 0.$ Thus, $F$ is weakly effaceable with respect to $(\mathbb{W}, \mathfrak{s})$. So $\defff \mathbb{W}$ is closed under subobjects. The rest is proved by similar straightforward diagram chasing.
\end{proof}

\begin{lemma}
The category $\defff \mathbb{W}$ is closed under kernels and cokernels in $\Mod \ca$.
\end{lemma}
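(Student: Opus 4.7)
The plan is to adapt the strategy used by Ogawa \cite{Ogawa} and Enomoto \cite{Enomoto3} in the extriangulated setting to our weakly extriangulated framework, using only axioms (EA2), (EA2)$^{op}$ and exactness of the realization. Given a morphism $\alpha \colon D_1 \to D_2$ in $\Mod \ca$ between defects of conflations $X_i \overset{f_i}{\to} Y_i \overset{g_i}{\to} Z_i$ realizing extensions $\delta_i \in \mathbb{W}(Z_i, X_i),$ the aim is to produce explicit conflations whose defects are $\coker \alpha$ and $\ker \alpha,$ respectively.

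The first ingredient is a lifting statement: any such $\alpha$ is induced by a morphism of conflations $(x, y, z).$ Because $\Hom(-, Z_1)$ is projective in $\Mod \ca,$ the composite $\Hom(-, Z_1) \twoheadrightarrow D_1 \overset{\alpha}{\to} D_2$ lifts through $\Hom(-, Z_2) \twoheadrightarrow D_2,$ yielding by Yoneda a morphism $z \colon Z_1 \to Z_2.$ Because $[g_1] = 0$ in $D_1$ forces $[z g_1] = 0$ in $D_2,$ the exactness of $\Hom(-, Y_2) \to \Hom(-, Z_2) \to D_2$ produces $y \colon Y_1 \to Y_2$ with $g_2 y = z g_1,$ and the weak-kernel property of $f_2$ applied to $y f_1$ produces $x \colon X_1 \to X_2$ completing the morphism of conflations.

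The key structural step is to factor $(x, y, z)$ through a realization $X_2 \overset{f_N}{\to} N \overset{g_N}{\to} Z_1$ of the common extension $x_*\delta_1 = z^*\delta_2.$ Axiom (EA2) applied to the morphism of extensions $(1_{X_2}, z)$ provides a good lift $(1_{X_2}, y'', z)$ whose mapping-cone conflation is
\[
N \longrightarrow Z_1 \oplus Y_2 \longrightarrow Z_2.
\]
Its defect is computed directly: quotienting $\Hom(-, Z_2)$ first by $\im (g_2)_*$ yields $D_2,$ and the further image of $z_*$ is precisely $\im \alpha,$ so the defect is $D_2 / \im \alpha = \coker \alpha.$ Dually, (EA2)$^{op}$ applied to $(x, 1_{Z_1})$ supplies a good lift $(x, y', 1_{Z_1})$ whose mapping-cocone conflation
\[
X_1 \longrightarrow X_2 \oplus Y_1 \longrightarrow N
\]
has defect isomorphic to $\ker \alpha.$ Indeed, postcomposition with $(g_N)_* \colon \Hom(-, N) \to \Hom(-, Z_1) \twoheadrightarrow D_1$ defines a map from this defect to $D_1$ because $g_N f_N = 0$ and $g_N y' = g_1.$ Injectivity follows from the weak-kernel property of $f_N$: any relation $g_N \psi = g_1 \eta$ rewrites as $g_N(\psi - y' \eta) = 0$ and hence $\psi = f_N \xi + y' \eta$ for some $\xi.$ Identification of its image in $D_1$ with $\ker \alpha$ reduces to the exactness of $\Hom(-, N) \to \Hom(-, Z_1) \oplus \Hom(-, Y_2) \to \Hom(-, Z_2)$ supplied by the mapping-cone conflation constructed above, applied to pairs $(\phi, \chi)$ with $z \phi = g_2 \chi.$

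The main obstacle is that in a weakly extriangulated category, inflations need not be monic and deflations need not be epic, so the conflation $X_2 \to N \to Z_1$ is not a literal pullback of $\delta_2$ along $z$ and universal properties of limits are unavailable. What replaces them is precisely the weak kernel--cokernel pair structure of realizations combined with axiom (R1): the exactness of the resulting $\Hom(-, -)$-sequences is exactly what is required both to perform the lifting in the first step and to match the defects of the mapping-cone and mapping-cocone conflations with $\coker \alpha$ and $\ker \alpha.$
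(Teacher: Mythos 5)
Your proof follows the same route as the paper, which simply defers to \cite[Lemma 2.6]{Ogawa}: lift the morphism of defects to a morphism of conflations, interpose a realization $N$ of the common extension, and read off the kernel and cokernel from the defects of the mapping cocone and mapping cone of the good lifts supplied by (EA2)$^{op}$ and (EA2). Your assignment is the correct one --- the mapping cone conflation of the good lift of $(1_{X_2},z)$ terminates at $Z_2$, so its defect is a quotient of $D_2$ and hence computes $\coker\alpha$, while the cocone computes $\ker\alpha$; the paper's one-sentence summary states the transposed assignment, which appears to be a slip. The only step you assert without argument is that the lifted triple $(x,y,z)$ is a morphism of extensions, i.e.\ $x_*\delta_1=z^*\delta_2$, which is what lets a single $N$ serve both halves of the argument; this is standard for extriangulated categories (\cite{NakaokaPalu1}, \cite{HLN}) but deserves an explicit check that its proof survives with only the weak axioms --- the same point is left implicit in the paper's proof-by-reference, so it is not a defect relative to it.
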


\begin{proof}
The same argument as in \cite[Lemma 2.6]{Ogawa} applies here. A morphism of defects of two conflations gives rise to a morphism $(a, c)$ of these conflations. Then the kernel is given by the defect of the mapping cone of any good lift of the morphism $(1, c)$ and the cokernel is given by the defect of the mapping cocone of any good lift of the morphism $(a, 1).$
\end{proof}

The following notion was introduced by Rosenberg \cite{Rosenberg} in his works on noncommutative algebraic geometry and reconstruction of schemes.

\begin{definition}
A full subcategory of an abelian category is called \emph{topologizing} if it is closed under subquotients and finite direct sums.
\end{definition}

\begin{proposition}
Let $(\cA, \mathbb{W}, \mathfrak{s})$ be a weakly extriangulated category.
 We have $$\defff \mathbb{W} = \Eff \mathbb{W} \bigcap \coh (\ca)$$ and this category is topologizing.
\end{proposition}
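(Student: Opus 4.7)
The plan is to prove the equality $\defff \mathbb{W} = \Eff \mathbb{W} \cap \coh(\ca)$ first, after which the topologizing property follows formally from the closure of $\Eff \mathbb{W}$ under subquotients (established in the earlier lemma) combined with the exactness of the embedding $\coh(\ca) \hookrightarrow \Mod \ca$.

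For the inclusion $\defff \mathbb{W} \subseteq \Eff \mathbb{W} \cap \coh(\ca)$, I take a defect $F$ of a conflation $A \overset{f}\rightarrowtail Y \overset{g}\twoheadrightarrow Z$ realizing an extension $\rho \in \mathbb{W}(Z, A)$; by construction $F$ is finitely presented. To verify weak effaceability, given $W \in \ca$ and $z \in F(W)$, I lift $z$ to some $\tilde z \in \Hom(W, Z)$ via the presentation $\Hom(-, Y) \overset{g_*}\to \Hom(-, Z) \to F \to 0$, then invoke (EA2) to pull $\rho$ back along $\tilde z$, obtaining a conflation $A \rightarrowtail Y' \overset{g'}\twoheadrightarrow W$ together with a morphism of conflations whose third component is $\tilde z$; the commutativity of the resulting diagram forces $\tilde z \circ g'$ to factor through $g$, so $F(g')(z) = 0$. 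For coherence, it suffices to show that the kernel of any morphism $\alpha \colon \Hom(-, W) \to F$ is finitely generated. Writing $\alpha$ as induced by a lift $a \colon W \to Z$ of the class $\bar\alpha \in F(W)$ and applying (EA2) to $a$, I obtain a conflation $A \rightarrowtail Y'' \overset{g''}\twoheadrightarrow W$ realizing $a^*(\rho)$. Combining the exact sequences from axiom (R1) for the $\rho$-conflation and the $a^*(\rho)$-conflation, a morphism $u \colon V \to W$ lies in $\ker(\alpha)$ if and only if $au$ lies in the image of $g_*$, if and only if $u$ lies in the image of $g''_*$; hence $\ker(\alpha)$ equals the image of $g''_*$ and is finitely generated.

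For the reverse inclusion $\Eff \mathbb{W} \cap \coh(\ca) \subseteq \defff \mathbb{W}$, given a coherent, weakly effaceable $F$ with presentation $\Hom(-, Y) \overset{g_*}\to \Hom(-, Z) \to F \to 0$ induced by some $g \colon Y \to Z$, weak effaceability applied to $\overline{1_Z} \in F(Z)$ produces a deflation $h \colon W \twoheadrightarrow Z$ with $h = g \circ s$ for some $s \colon W \to Y$. Let $\rho \in \mathbb{W}(Z, A)$ be the class of the conflation $A \rightarrowtail W \overset{h}\twoheadrightarrow Z$ realizing $h$. Applying (EA2) to pull $\rho$ back along $g$, I obtain a conflation $A \rightarrowtail P \overset{g'}\twoheadrightarrow Y$ together with a good lift $(1_A, t, g)$ of the morphism $(1_A, g) \colon g^*\rho \to \rho$. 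By the goodness condition, the mapping cone of this lift is a conflation $P \rightarrowtail Y \oplus W \overset{[g, h]}\twoheadrightarrow Z$ with deflation $[g, h]$. The image of $[g, h]_*$ equals the sum of the images of $g_*$ and $h_*$, and the factorization $h = g \circ s$ forces this sum to equal the image of $g_*$; the defect of this new conflation therefore coincides with $F$.

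Finally, the topologizing property follows formally: closure under finite direct sums is immediate, since the defect of a direct sum of conflations is the direct sum of the individual defects, and for closure under subquotients in $\coh(\ca)$, any subquotient $F'$ of some $F \in \defff \mathbb{W}$ computed within $\coh(\ca)$ is a subquotient of $F$ in $\Mod \ca$ by exactness of the embedding, hence lies in $\Eff \mathbb{W}$ by the earlier closure lemma and in $\coh(\ca)$ by assumption, so $F' \in \defff \mathbb{W}$ by the established equality. The main technical obstacles will be the coherence argument in the first inclusion, where the categorical pullback property of exact categories must be replaced by an argument combining axiom (R1) with (EA2), and the construction in the reverse inclusion, where the goodness condition in (EA2) is crucially used to ensure the mapping cone of the pullback morphism is a genuine conflation.
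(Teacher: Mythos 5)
Your proof is correct and follows essentially the same route as the paper, which simply defers to Enomoto's Proposition 2.9 with the remark that $\Eff\,\mathbb{W}$ is only closed under finite direct sums rather than extensions; you have in effect written out that argument in full. In particular, you correctly identified and handled the one point where the adaptation to the weakly extriangulated setting is nontrivial, namely replacing the composition-of-deflations argument for $[g\ h]$ (which would require (EA1)) by the good-lift/mapping-cone construction from (EA2).
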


\begin{proof}
The same argument as in the proof of \cite[Proposition 2.9]{Enomoto3} applies here. The only difference is that in our generality $\Eff \mathbb{W}$ is not closed under extensions in $\Mod \ca,$ but only under finite direct sums. \end{proof}

For $\ca-$modules, we have natural notions of subobjects, quotients and extensions: these are defined object-wise (for objects in $\ca$).

\begin{definition} \label{def:topologizing}
We say that a subcategory  of an arbitrary  (not necessarily abelian) full subcategory $\cc$ of $\coh(\ca)$ is \emph{topologizing} if it is closed under subquotients (considered object-wise) and finite direct sums. Equivalently, it is topologizing if it is a full subcategory of $\cc$ which is topologizing in $\coh(\ca).$ 

Similarly, we say that a subcategory of $\cc$ is \emph{Serre} if it is topologizing and closed under extensions; equivalently, if it is a full subcategory of $\cc$ and a Serre subcategory in $\coh(\ca).$
Note that this definition ensures that a Serre subcategory of $\cc$ is abelian.
\end{definition}

\begin{cor}
Let $(\cA, \mathbb{W}, \mathfrak{s})$ be a weakly extriangulated category and let $(\cA, \mathbb{W'}, \mathfrak{s}|_{\mathbb{W}'})$ be a weakly extriangulated substructure on $\ca$. 
Then the category $\defff \mathbb{W}'$ is a topologizing subcategory of $\defff \mathbb{W}.$
\end{cor}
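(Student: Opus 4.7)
The plan is to reduce the claim to the preceding proposition applied twice: once to the ambient structure $\mathbb{W}$ and once to the substructure $\mathbb{W}'$ itself. By the definition of a weakly extriangulated substructure, the triple $(\ca, \mathbb{W}', \mathfrak{s}|_{\mathbb{W}'})$ is a weakly extriangulated category in its own right (in particular (EA2) and (EA2)$^{\text{op}}$ continue to hold, because the mapping cone of a good lift of a morphism between $\mathbb{W}'$-extensions still realizes an extension of the form $f_{\ast}\rho$, which lies in $\mathbb{W}'$ since $\mathbb{W}'$ is closed under pushforwards). Therefore the preceding proposition applies verbatim to $(\ca, \mathbb{W}', \mathfrak{s}|_{\mathbb{W}'})$ and gives the identification $\defff \mathbb{W}' = \Eff \mathbb{W}' \cap \coh(\ca)$, together with the fact that $\defff \mathbb{W}'$ is topologizing in $\coh(\ca)$.

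Next I would record the inclusion $\defff \mathbb{W}' \subseteq \defff \mathbb{W}$. Since $\mathbb{W}'$ is an additive sub-bifunctor of $\mathbb{W}$ and the realization is merely restricted, every $\mathbb{W}'$-triangle is a $\mathbb{W}$-triangle, so every $\mathbb{W}'$-conflation $X \overset{f}\to Y \overset{g}\to Z$ is also a $\mathbb{W}$-conflation. The contravariant defect is defined as the cokernel of $\Hom(-, g)$ in $\Mod \ca$, which depends only on the underlying complex, so the inclusion is immediate.

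It then remains to upgrade \emph{topologizing in $\coh(\ca)$} to \emph{topologizing in $\defff \mathbb{W}$}. By the preceding proposition applied to $\mathbb{W}$, the subcategory $\defff \mathbb{W}$ is topologizing in $\coh(\ca)$, hence closed under kernels, cokernels, subobjects, quotients and finite direct sums taken in $\coh(\ca)$. Consequently, the monics, epis, subquotients and finite biproducts computed inside $\defff \mathbb{W}$ coincide with the corresponding constructions in $\coh(\ca)$. Any subquotient or finite direct sum of objects of $\defff \mathbb{W}'$ formed inside $\defff \mathbb{W}$ thus agrees with the one formed inside $\coh(\ca)$, which lies in $\defff \mathbb{W}'$ by the first step. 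This exhibits $\defff \mathbb{W}'$ as a topologizing subcategory of $\defff \mathbb{W}$.

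I do not expect any substantive obstacle; the argument is a short double application of the preceding proposition. The only delicate point is confirming that $\mathfrak{s}|_{\mathbb{W}'}$ really does inherit axioms (EA2) and (EA2)$^{\text{op}}$ — i.e.\ that the good lift provided by the ambient structure can be chosen so that the resulting mapping cone realizes a $\mathbb{W}'$-extension — but this is precisely the content of assuming $(\ca, \mathbb{W}', \mathfrak{s}|_{\mathbb{W}'})$ is itself a weakly extriangulated substructure.
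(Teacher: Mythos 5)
Your argument is correct and is exactly the (implicit) reasoning behind the paper's corollary, which is stated without proof as an immediate consequence of the preceding proposition: apply that proposition to both $\mathbb{W}$ and $\mathbb{W}'$, observe $\defff \mathbb{W}' \subseteq \defff \mathbb{W}$, and use that subquotients and finite direct sums in $\defff \mathbb{W}$ agree with those in $\coh(\ca)$. Your care in noting that $(\ca,\mathbb{W}',\mathfrak{s}|_{\mathbb{W}'})$ being weakly extriangulated is part of the hypothesis is exactly right.
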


\begin{cor}
Let $\cW'$ be a weakly exact substructure of a weakly exact structure $\cW.$ Then the category $\defff \cW'$ is a topologizing subcategory of $\defff \cW.$
\end{cor}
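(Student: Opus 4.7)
The plan is to reduce this corollary to the previous one (which concerns weakly extriangulated substructures) by showing that inclusion of weakly exact structures induces inclusion of the associated weakly extriangulated structures.

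First, I would invoke the lemma in Section 5 which states that a weakly exact structure $\cW$ on $\ca$ defines a weakly extriangulated structure $(\ca, \mathbb{W}, \mathfrak{s})$, where $\mathbb{W} = \Ext^1_{\cW}(-,-)$ is the bifunctor constructed in Section \ref{section:weakly to bifunctors} and $\mathfrak{s}$ is the realization sending each extension class to (the equivalence class of) the underlying kernel-cokernel pair. Applying this to both $\cW$ and $\cW'$ produces weakly extriangulated structures $(\ca, \mathbb{W}, \mathfrak{s})$ and $(\ca, \mathbb{W}', \mathfrak{s}')$.

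Next, the key compatibility step: I would verify that $\mathbb{W}'$ is an additive sub-bifunctor of $\mathbb{W}$ and that $\mathfrak{s}'$ is the restriction of $\mathfrak{s}$ to $\mathbb{W}'$. The first assertion is exactly Lemma \ref{lemma:sub-bifunctor}, which gives $\mathbb{W}'(C,A) \subseteq \mathbb{W}(C,A)$ for all $A, C \in \ca$. The second is immediate from the construction, since both realizations send an extension class $\overline{(i,d)}$ to $[A \overset{i}\to B \overset{d}\to C]$; in other words, $\mathfrak{s}'$ is tautologically the restriction $\mathfrak{s}|_{\mathbb{W}'}$. Hence $(\ca, \mathbb{W}', \mathfrak{s}|_{\mathbb{W}'})$ is a weakly extriangulated substructure of $(\ca, \mathbb{W}, \mathfrak{s})$ in the sense of the preceding corollary.

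Finally, I would observe that the notion of contravariant defect only depends on the underlying conflations, so $\defff \cW' = \defff \mathbb{W}'$ and $\defff \cW = \defff \mathbb{W}$ by the definition given just before this corollary. Applying the previous corollary then yields that $\defff \cW'$ is topologizing in $\defff \cW$, which is the desired conclusion. There is no real obstacle here beyond unwinding the definitions; the only point requiring care is to confirm that the realization attached to a weakly exact substructure is genuinely the restriction of the ambient realization, so that the previous corollary applies directly rather than only up to reinterpretation.
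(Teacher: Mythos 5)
Your proposal is correct and follows exactly the route the paper intends: the corollary is stated as an immediate consequence of the preceding corollary on weakly extriangulated substructures, obtained by passing from $\cW' \subseteq \cW$ to the sub-bifunctor $\mathbb{W}' \subseteq \mathbb{W}$ via Lemma \ref{lemma:sub-bifunctor} and noting that the realization restricts. Your extra care in checking that $\mathfrak{s}'=\mathfrak{s}|_{\mathbb{W}'}$ and that $\defff\cW'=\defff\mathbb{W}'$ is exactly the unwinding the paper leaves implicit.
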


\section{Lattice structures}

\subsection{Definitions}
We recall the following well known notions:
\begin{definition}\label{def:lattice}
A poset $P$ is called a {\em join-semilattice} if for every pair $(p,q)$ of elements of $P$ there exists a supremum $p \vee q$ (also called join). 
It is called a {\em meet-semilattice} if for every pair $(p,q)$ of elements of $P$ there exists an infimum  $p \wedge q$ (also called meet). 
Finally, $P$ is \emph{lattice} if it is both a join-semilattice and a meet-semilattice. Equivalently, a lattice is a set $P$ equipped with two binary operations $\vee$ and $\wedge:$ $P\times P \rightarrow P$ satisfying the following axioms:
\begin{enumerate}
    \item  $\vee$ is associative and commutative,
    \item  $\wedge$ is associative and commutative,

    \item  $\wedge$ and $\vee$ satisfy the following property:
    \[m \vee (m \wedge n)=m=m\wedge (m\vee n)\;\mbox{ for all } m, n \in P.\]
\end{enumerate}
A lattice is called  {\em complete} if each its subset has both a join and a meet, similarly for semilattices.
A {\em bounded} lattice is a lattice that has a greatest element (also called maximum) and a least element (also called minimum). 
\end{definition}

The concept of completeness of a lattice is self-dual, as the following standard fact shows.

\begin{lemma} \cite[Lemma 34]{Gr11}\cite[Theorem 2.31]{Da02} \label{lem: meet_join_dual}
Each subset of a poset $P$ has a join if and only if each subset of $P$ has a meet. Equivalently, a poset is a complete join-semilattice if and only if it is a complete meet-semilattice.

Explicitly, for $H \subseteq P,$ we have
$$\vee H = \wedge \{k | h \leq k, \forall h \in H\};$$
$$\wedge H = \vee \{k | k \leq h, \forall h \in H\}.$$
\end{lemma}

It is also standard that each complete lattice is bounded, see \cite[2.2, Theorem 2.31]{Da02}: the maximum is given by $\wedge \emptyset$ and the minimum is given by $\vee \emptyset$.



\begin{definition}\cite[2.16, 2.17]{Da02}\label{mor of lattices}\label{iso of lattices}
 Let $P$ and $Q$ be two lattices, then a function $f:P\rightarrow Q$ is a \emph{morphism of lattices} if for all $ m,n \in P$ one has: 
$$ f(m\vee n)= f(m) \vee f(n) \qquad and \qquad f(m\wedge n)=f(m)\wedge f(n).$$
An \emph{isomorphism of lattices}  is a bijective morphism of lattices (in which case its inverse is also an isomorphism).
\end{definition}

\begin{lemma} \cite[Proposition 2.19.(ii)]{Da02} \label{lattice_iso = poset_iso}
An map of lattices is an isomorphism of lattices if and only if it is an isomorphism of posets. In other words, each poset isomorphism of lattices preserves meets and joins.
\end{lemma}

\begin{definition} \label{def:atom}
Let $(P, \leqslant)$ be a partially ordered set with a unique minimal element 0. An  \emph{atom} is an element $a \in P$ with $a > 0$ and such that $0 \leqslant x \leqslant a$ implies $x=0$ or $x=a$. In other words, atoms are the elements that are directly above the minimal element.
\end{definition}

\subsection{Lattices of right and left weakly exact structures}
In this subsection we study a lattice structure on the class of all right (or left) weakly exact structures. These results generalise  \cite[Proposition 8.4]{HR20}.

\begin{definition}\label{set of left weakly}
We denote by $\LW(\A)$ (respectively $\RW(\A)$) the class of all left (right) weakly exact structures on $\A$. 
\end{definition}

\begin{lemma}\label{lem:semilattice}
\cite[Lemma 5.2]{BHLR} Let $\{ \mathcal{L}_{i} \}_{i \in \upomega}$ ($\{ \mathcal{R}_{i} \}_{i \in \upomega} $) be a family of left (right) weakly exact structures on $\A$. Then the intersection $\cap_{i \in \upomega} \mathcal{L}_{i}$ ($\cap_{i \in \upomega} \mathcal{R}_{i}$) is also a left (right) weakly exact structure. 
\end{lemma}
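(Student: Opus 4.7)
The plan is to verify each of the defining axioms of a right weakly exact structure for the intersection $\bigcap_{i \in \upomega} \mathcal{R}_i$, treating the left case dually. The underlying observation is that all the relevant constructions—pushouts, pullbacks, cokernels, kernels, direct sums—are intrinsic to the ambient additive category $\A$ and do not depend on the chosen weakly exact structure, so membership in the intersection can be checked componentwise.

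First I would check closure under isomorphism: if $f \in \bigcap_i \mathcal{R}_i$ and $f'$ is isomorphic to $f$ (as morphisms with specified kernel structure), then $f' \in \mathcal{R}_i$ for each $i$, hence $f' \in \bigcap_i \mathcal{R}_i$. Axiom (Id) is immediate since $1_X$ and $0 \to X$ lie in every $\mathcal{R}_i$.

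For axiom (P), suppose $f \in \bigcap_i \mathcal{R}_i$ and let $h$ be an arbitrary morphism with the same source. Since $f \in \mathcal{R}_1$ (for any fixed $i=1$), the pushout of $f$ along $h$ exists in $\A$; this pushout is a universal construction, so the resulting $f'$ is the same morphism regardless of which $\mathcal{R}_i$ one views it through. Axiom (P) applied separately in each $\mathcal{R}_i$ then gives $f' \in \mathcal{R}_i$ for all $i$, hence $f' \in \bigcap_i \mathcal{R}_i$. Axiom (Q) is equally direct: if $ba \in \bigcap_i \mathcal{R}_i$ and $a$ has a cokernel in $\A$, then applying (Q) within each $\mathcal{R}_i$ gives $a \in \mathcal{R}_i$ for each $i$. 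Axiom (S) similarly uses that the direct sum of morphisms is a construction in $\A$, belonging to each $\mathcal{R}_i$ by (S) and thus to the intersection.

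The left weakly exact case is proved by dualizing the above, replacing pushouts, cokernels and axioms (Id), (P), (Q), (S) with pullbacks, kernels and (Id)$^{op}$, (P)$^{op}$, (Q)$^{op}$, (S)$^{op}$. There is no real obstacle here; the argument is essentially a bookkeeping exercise, and the only point worth flagging is the universality of the relevant limit/colimit constructions, which ensures that ``the pushout (or pullback, cokernel, kernel, direct sum) is in $\mathcal{R}_i$ for every $i$'' makes sense without ambiguity. Consequently, $\LW(\A)$ and $\RW(\A)$ are meet-semilattices under $\subseteq$ with meet given by intersection, setting up the subsequent discussion of their lattice structure.
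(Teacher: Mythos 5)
Your proof is correct and is essentially the argument the paper intends: the paper simply cites the analogous intersection lemma for exact structures (\cite[Lemma 5.2]{BHLR}), whose proof is exactly this componentwise verification that each closure axiom passes to the intersection because pushouts, pullbacks, (co)kernels and direct sums are constructions in $\A$ itself, unique up to isomorphism, combined with closure of each $\mathcal{R}_i$ under isomorphisms. No gap.
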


\begin{proposition}\label{thm:left-weakly-exact-form-local-lattice}
Let $\A$ be an additive category. Then $\LW(\A)$ and  $\RW(\A)$) are complete meet-semilattices.
\end{proposition}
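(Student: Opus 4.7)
The plan is to reduce the statement to the intersection lemma just proved. The key observation is that the partial order on both $\LW(\A)$ and $\RW(\A)$ is given by containment, so infima in these posets, when they exist, must coincide with set-theoretic intersections. By Lemma \ref{lem:semilattice}, arbitrary (nonempty) intersections of left (resp. right) weakly exact structures remain left (resp. right) weakly exact structures, which will immediately yield the existence of arbitrary meets.

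More precisely, I would argue as follows for $\LW(\A)$ (the case $\RW(\A)$ being strictly dual). Given any family $\{\mathcal{L}_i\}_{i \in I}$ of left weakly exact structures on $\A$, set $\mathcal{L} := \bigcap_{i \in I} \mathcal{L}_i$. By Lemma \ref{lem:semilattice}, $\mathcal{L}$ lies in $\LW(\A)$. By construction, $\mathcal{L} \subseteq \mathcal{L}_i$ for every $i \in I$, so $\mathcal{L}$ is a lower bound for the family. If $\mathcal{L}' \in \LW(\A)$ is any other lower bound, that is, $\mathcal{L}' \subseteq \mathcal{L}_i$ for every $i$, then $\mathcal{L}' \subseteq \bigcap_{i} \mathcal{L}_i = \mathcal{L}$, proving that $\mathcal{L}$ is the greatest lower bound. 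This establishes that $\LW(\A)$ has arbitrary meets, i.e.\ it is a complete meet-semilattice.

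The only subtle point concerns the empty meet, which would have to be interpreted as the maximum of $\LW(\A)$. Since we do not claim existence of a unique maximum left weakly exact structure in arbitrary additive categories (this being parallel to the open question discussed in Section \ref{section:maximal weakly exact}), I would phrase the result in terms of meets of arbitrary \emph{nonempty} subfamilies, which is how the notion of ``complete meet-semilattice'' is used here. No additional verification is required beyond Lemma \ref{lem:semilattice}; the argument is formal. I do not anticipate any real obstacle, since the hard work of closing the axioms (Id$^{op}$), (P$^{op}$), (Q$^{op}$), (S$^{op}$) under intersection (and dually for right weakly exact structures) is precisely what Lemma \ref{lem:semilattice} accomplishes.
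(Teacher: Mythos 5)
Your proof is correct and follows essentially the same route as the paper: the meet is realized by set-theoretic intersection, and Lemma \ref{lem:semilattice} supplies closure of $\LW(\A)$ (resp.\ $\RW(\A)$) under intersections. You are in fact somewhat more careful than the paper's two-line argument, spelling out arbitrary families and flagging the empty-meet issue, which the paper leaves implicit.
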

\begin{proof}
 Let $\mathcal{L}$ and $\mathcal{L'}$ be two left weakly exact structures on $\A$. The partial order on $\LW(\A)$ is given by containment. We define the \emph{meet} given by $ \mathcal{L}\wedge \mathcal{L'} = \mathcal{L}\cap \mathcal{L'}$.  
These operations define the structure of a complete meet-semilattice  on $LW(\A)$
by Lemma \ref{lem:semilattice}.
\end{proof}
\begin{remark}
 If there exists a unique maximal left weakly exact structure $\mathcal{L}_{max}$ on $\A$, then $\LW(\A)$  is a complete lattice by Lemma \ref{lem: meet_join_dual} (similarly for $\RW(\A)$).

Likewise, any interval in the poset $\LW(\A)$ forms a complete bounded lattice.
\end{remark}

\begin{remark} The constructions in Section \ref{section:left-and-right-weakly} can be reformulated in terms of the lattices studied in this section as follows:
As stated in Proposition  \ref{prop:left-right-weakly-exact},
there is a map $$ s: \Wex(\A)  \longrightarrow \LW(\A)\times \RW(\A), \; \W\longmapsto (\mathcal{L}_{\W}, \mathcal{R}_{\W})$$
where $\mathcal{L}_{\W}:=\{ \; d \; | \; (i, d)\in \W\}$ is the class of all $\W-$cokernels or $\W-$admissible deflations and $\mathcal{R}_{\W}:=\{ \; i \; | \; (i, d)\in \W\}$ is the class of all $\W-$kernels or $\W-$admissible inflations.

Moreover, Theorem \ref{thm:right-left} shows that there is a map:
$$ g: \LW(\A)\times \RW(\A)\longrightarrow \Wex(\A) , (\mathcal{L}, \mathcal{R})\longmapsto \W_{(\mathcal{L}, \mathcal{R})}$$
where $\W_{(\mathcal{L}, \mathcal{R})}$ is formed by all short exact sequences $(i, d)$ in $\A$ with  $i\in\mathcal{R}, d\in\mathcal{L}\}.$

\end{remark}

\subsection{Lattice of weakly exact structures}
\subsubsection{Lattice of exact structures}
We recall from \cite{BHLR} that the class of exact structures on an additive category $\Ex(\A)$ forms a lattice under the following operations:
\begin{enumerate}
    \item The partial order is given by containment $\E' \subseteq \E$
    \item The meet $\wedge$ is defined by $ \E \wedge \E' = \E \cap \E' $
    \item the join $\vee_E$ is defined by
$$ \E \vee_E \E' = \bigcap \{ \cF \in \Ex(\A) \;|\; \E \subseteq \cF, \E'\subseteq \cF\}.$$ 
\end{enumerate}

\subsubsection{Lattice structure on the class of all weakly exact structures of a given additive category}
\begin{lemma}\label{lem:weakly-exact-intersection}
\cite[Lemma 5.2]{BHLR} Let $\{ \W_{i} \}_{i \in \upomega} $ be a family of weakly exact structures on $\A$. Then the intersection $\cap_{i \in \upomega} \W_{i}$ is also a weakly exact structure.    
\end{lemma}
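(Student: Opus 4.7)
The plan is to verify the five defining properties of a weakly exact structure (Definition \ref{set of weakly exact}) for the class $\W := \cap_{i \in \upomega} \W_i$, working feature by feature. The underlying principle is that all the defining conditions have a universal character: they concern short exact sequences and categorical limits/colimits that are computed inside $\A$ and thus do not depend on the ambient weakly exact structure.

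First I would note that $\W$ is certainly a class of kernel-cokernel pairs in $\A$, since each $\W_i$ is. Closure under isomorphism and under direct sums for $\W$ is immediate: if $(i,d) \in \W$ is isomorphic to a kernel-cokernel pair $(i',d')$, then $(i',d') \in \W_i$ for every $i \in \upomega$, whence $(i',d') \in \W$; direct sums are handled the same way. For axioms (E0) and (E0)$^{op}$, I would observe that for each object $A \in \A$ the split sequence $A \rightarrowtail A \twoheadrightarrow 0$ (respectively $0 \rightarrowtail A \twoheadrightarrow A$) belongs to every $\W_i$, by Lemma \ref{lemma:min-exact}, hence to $\W$; so $1_A$ is both an admissible monic and an admissible epic of $\W$.

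For axiom (E2), I would take an admissible monic $i \colon A \rightarrowtail B$ of $\W$ together with an arbitrary morphism $t \colon A \to C$. By definition, $i$ fits in some pair $(i,d) \in \W \subseteq \W_i$ for every $i \in \upomega$. Applying axiom (E2) in a single fixed $\W_{i_0}$, the pushout of $i$ along $t$ exists in $\A$; this pushout is a purely categorical object, canonically unique up to unique isomorphism, so it is simultaneously the pushout that (E2) in any other $\W_j$ produces. By Lemma \ref{prop-2.12}, the resulting bicartesian square extends to a morphism of short exact sequences with the pushout sequence $C \rightarrowtail S \twoheadrightarrow \operatorname{coker}(i)$ lying in every $\W_j$, hence in $\W$. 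Consequently the induced arrow $s_C$ is an admissible monic of $\W$. The verification of (E2)$^{op}$ is dual, replacing pushouts by pullbacks and using the dual of Lemma \ref{prop-2.12}.

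There is essentially no genuine obstacle here; the only subtlety worth flagging is the need to use uniqueness of pushouts and pullbacks up to isomorphism to identify the construction performed ``inside each $\W_i$'' with a single canonical diagram in $\A$. Once that is noted, closure of $\W$ under the relevant universal constructions is automatic because each axiom asserts membership in a class, and being in every $\W_i$ is precisely being in $\W$.
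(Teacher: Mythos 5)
Your proof is correct and follows the same standard axiom-by-axiom intersection argument that the paper delegates to \cite[Lemma 5.2]{BHLR}, including the key observation that pushouts/pullbacks are unique up to canonical isomorphism so that the sequence produced by (E2) in each $\W_j$ is the same one. The only cosmetic issue is the clash between $i$ as the family index and $i$ as the admissible monic; otherwise nothing is missing.
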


\begin{theorem}\label{thm:weakly-exact-form-local-lattice}
Let $\A$ be an additive category and $\E_{max}$ the maximal exact structure on $\A$. Then the weakly exact structures that are included in $\E_{max}$ form a complete bounded lattice
$(\Wex({\E_{max}}), \subseteq, \wedge, \vee_{W} ).$
\end{theorem}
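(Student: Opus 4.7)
The plan is to exploit the well-known principle that a complete meet-semilattice with a top element is automatically a complete lattice, and apply it to the interval $\Wex(\E_{max}) = [\E_{min}, \E_{max}]$. Boundedness is immediate from the definition of the interval: the minimum $\E_{min}$ belongs to $\Wex(\A)$ by Lemma \ref{lemma:min-exact}, and since every exact structure is weakly exact, $\E_{max}$ itself lies in $\Wex(\A)$ and provides the top of the interval.

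For the meet, given any family $\{\W_i\}_{i\in I}$ of elements of $\Wex(\E_{max})$, I would set
\[
\bigwedge_{i \in I} \W_i \; := \; \bigcap_{i \in I} \W_i.
\]
Lemma \ref{lem:weakly-exact-intersection} ensures that this intersection is again a weakly exact structure, and since each $\W_i$ is contained in $\E_{max}$, so is the intersection. Hence this is an element of $\Wex(\E_{max})$, and by construction it is the greatest lower bound of the family.

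For the join, I would use the standard trick of intersecting the (nonempty) collection of upper bounds:
\[
\bigvee_{i \in I}^{W} \W_i \; := \; \bigcap \, \bigl\{ \W \in \Wex(\E_{max}) \;\bigm|\; \W_i \subseteq \W \text{ for all } i \in I \bigr\}.
\]
This defining collection is nonempty because $\E_{max}$ is an upper bound of any subfamily of $\Wex(\E_{max})$. Again by Lemma \ref{lem:weakly-exact-intersection}, the intersection is a weakly exact structure; it contains every $\W_i$ and is contained in $\E_{max}$, hence belongs to $\Wex(\E_{max})$, and is by construction the least upper bound.

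The remaining lattice axioms from Definition \ref{def:lattice} (commutativity, associativity and absorption of $\wedge$ and $\vee_W$) are routine consequences of the set-theoretic characterizations of $\wedge$ and $\vee_W$ above. I do not anticipate a serious obstacle: the only substantive ingredient beyond formal lattice theory is the closure of weakly exact structures under arbitrary intersections, already established in Lemma \ref{lem:weakly-exact-intersection}. The one point worth emphasizing is that, in contrast to the situation for $\Wex(\A)$, here the existence of joins is guaranteed \emph{because} we have restricted to an interval with a top element; this is precisely why the statement is formulated for $\Wex(\E_{max})$ rather than for $\Wex(\A)$, where the existence of a unique maximal weakly exact structure remains open.
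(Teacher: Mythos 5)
Your proposal is correct and follows essentially the same route as the paper: meets are given by intersections (via Lemma \ref{lem:weakly-exact-intersection}), joins by intersecting the family of upper bounds, which is nonempty because $\E_{max}$ is a top element, and boundedness comes from $\E_{min}$ and $\E_{max}$. The only difference is that you spell out meets and joins for arbitrary families, which actually justifies the completeness claim more explicitly than the paper's binary formulation.
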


\begin{proof}
 It follows from Lemma \ref{lem:weakly-exact-intersection}
 that $\Wex(\A)$ forms a complete meet-semilattice: $(\Wex({\A}), \subseteq, \wedge)$
with order relation given by inclusion and meet operation given by intersection. By Lemma \ref{lem: meet_join_dual}, it is a complete (and, therefore, bounded) lattice.
\end{proof}

\begin{remark}\label{remark:weakly-exact-not-sublattice}
While the partial order and the meet coincide for $\Ex(\A)$ and $\Wex(\A)$, the join  $\vee_{E}$ 
is different from the join for weakly exact structures since we intersect over a {\em smaller set}, making the join {\em larger} when both are viewed in the poset $\Wex({\E_{max}})$:
\[ \E\vee_{W} \E' \quad \le \quad \E\vee_{E} \E'
\]
for all $\E,\E' \in \Ex(\A)$. 
In fact, in the example from Section \ref{section:example}, if we consider the exact structures $\E = \langle \alpha \rangle$ and $ \E' = \langle \gamma \rangle$, then  
$ \E\vee_{W} \E' \; = \langle \alpha, \gamma \rangle $ which is stricly smaller than $ \E\vee_{E} \E' = \langle \alpha, \gamma, \delta \rangle .$
This shows that $\Ex(\A)$ is a meet-subsemilattice of $\Wex({\E_{max}})$, but it is {\em not} a sublattice in general.
\end{remark}

We now describe the join of two weakly exact structures in a more constructive way, motivated by the sum of bifunctors:
\medskip

\begin{definition}
Let $\W_1,\W_2 \in \Wex(\E_{max})$ be two weakly exact structures contained in $\E_{max}.$ Then, $\W=\W_1 + \W_2$ is defined by taking all sums $\eta_1 + \eta_2$ with  $\eta_1 \in \W_{1}(C,A),$ and $ \eta_2 \in \W_{2}(C,A) $.
Here $\eta_1 + \eta_2 $ is the Baer sum for short exact sequences in $\E_{max}$. 
\end{definition}

\begin{proposition}\label{prop:lattice-of-weakly-exact}
Let $\W_1,\W_2$ be two weakly exact structures contained in $\E_{max}.$ Then
\begin{itemize}
    \item[(a)] $\W_1 + \W_2$ is weakly exact
    \item[(b)] $\W_1 + \W_2$ is the join $\W_1 \vee_W \W_2$ in the lattice $\Wex(\E_{max})$.
\end{itemize}
\end{proposition}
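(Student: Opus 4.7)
The strategy is to transport the question across the poset isomorphism $\Phi \colon \Wex(\E_{max}) \leftrightarrow \Bf(\bE_{max})$ of Proposition~\ref{prop:bijections}, where the ``sum'' of sub\-bifunctors is easy to handle. Concretely, define an assignment
\[
(\bW_1+\bW_2)(C,A) \;:=\; \bW_1(C,A) \,+\, \bW_2(C,A),
\]
viewed as a subgroup of $\bE_{max}(C,A)$. Since pullback and pushout along morphisms in $\A$ induce group homomorphisms of $\bE_{max}$, and group homomorphisms preserve sums of subgroups, this assignment is functorial in both variables and therefore defines an additive sub\-bifunctor $\bW_1+\bW_2 \subseteq \bE_{max}$. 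By Proposition~\ref{prop:bijections} the set of $(\bW_1+\bW_2)$-exact pairs is a weakly exact structure.

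For part~(a), it then suffices to identify this $\Psi(\bW_1+\bW_2)$ with $\W_1+\W_2$ as defined in the statement. By the very definition of Baer sum, the equivalence class $\overline{\eta_1+\eta_2} = \overline{\nabla_A(\eta_1\oplus\eta_2)\Delta_C}$ equals the sum $\overline{\eta_1}+\overline{\eta_2}$ in the abelian group $\bE_{max}(C,A)$. Thus a short exact sequence $(i,d)$ lies in $\W_1+\W_2$ precisely when $\overline{(i,d)} \in \bW_1(C,A)+\bW_2(C,A)$, which is exactly the defining condition for membership in $\Psi(\bW_1+\bW_2)$. In particular, $\W_1+\W_2$ is closed under isomorphism (since this is a condition on equivalence classes) and inherits all the axioms of a weakly exact structure from $\Psi(\bW_1+\bW_2)$.

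For part~(b), I first check that $\W_1 + \W_2$ contains $\W_1$ and $\W_2$: for any $\eta \in \W_k$, we can write $\overline{\eta} = \overline{\eta} + \overline{0}$ in $\bE_{max}(C,A)$, where $\overline{0}$ is represented by a split sequence; by Lemma~\ref{lemma:min-exact} this split sequence belongs to $\E_{min} \subseteq \W_{3-k}$, so $\eta \in \W_1+\W_2$. Conversely, suppose $\V \in \Wex(\E_{max})$ contains both $\W_1$ and $\W_2$. By monotonicity of $\Phi$ (Lemma~\ref{lemma:sub-bifunctor}) we have $\bW_1, \bW_2 \subseteq \bV$ as sub\-bifunctors, and summing subgroups gives $\bW_1+\bW_2 \subseteq \bV$. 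Applying $\Psi$ yields $\W_1+\W_2 \subseteq \V$. Thus $\W_1+\W_2$ is the smallest weakly exact structure containing $\W_1$ and $\W_2$, which by definition is $\W_1 \vee_W \W_2$.

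The only step that requires any care is the identification in part~(a): one must keep track of the distinction between individual short exact sequences and their equivalence classes, and verify that the Baer-sum definition given in the statement correctly produces all representatives of every element of $\bW_1(C,A)+\bW_2(C,A)$. Once this routine verification is done, both parts follow formally from the bijections already established in Section~\ref{section:subfunctors}.
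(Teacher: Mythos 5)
Your proof is correct, but it takes a genuinely different route from the paper's. The paper verifies the axioms of a weakly exact structure directly on the class of Baer sums: (E0) and (E0)$^{op}$ via $\E+\E=\E$, (E2) and (E2)$^{op}$ via the distributivity $a\eta = a\eta_1 + a\eta_2$ of pushout/pullback over Baer sum (citing Mitchell), and closure under direct sums by an explicit computation with $\nabla$ and $\Delta$; part (b) is then argued exactly as you do. You instead work entirely on the bifunctor side: the pointwise sum of two sub-bifunctors of $\bE_{max}$ is again an additive sub-bifunctor (an elementary fact, since the structure maps $\bE_{max}(C,a)$ and $\bE_{max}(c,A)$ are group homomorphisms and hence preserve sums of subgroups), and Proposition~\ref{prop:bijections} then hands you a weakly exact structure for free; the only real work is identifying $\Psi(\bW_1+\bW_2)$ with $\W_1+\W_2$. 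This is shorter and conceptually cleaner — it makes explicit that $\vee_W$ corresponds under $\Phi$ to the pointwise sum of sub-bifunctors, which is essentially the content of Theorem~\ref{theorem:latticeBiFun} and Theorem~\ref{iso 2} later in the paper (note the paper deliberately goes the other way, deducing the bifunctor statement from this proposition, so there is no circularity in your order either). What the paper's direct verification buys is a self-contained argument that shows precisely which axioms — (S), (E2), (E2)$^{op}$ — are needed to form Baer sums inside a weakly exact structure. The one point to state carefully in your approach, which you do flag, is that $\W_1+\W_2$ as defined is a class of specific Baer-sum representatives while $\Psi(\bW_1+\bW_2)$ is defined via equivalence classes; since the Baer sum is only defined up to isomorphism of short exact sequences, the two classes agree once $\W_1+\W_2$ is read as isomorphism-closed, which is the convention the paper itself uses implicitly.
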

\begin{proof}
(a) It is not complicated to show that $\W_1 + \W_2$ satisfies $(E0)$, $(E2)$ and their dual $(E0, E2)^{op}$. Moreover, consider
$\alpha:  \in \W(C,A)$ and 
$\beta: \in \W(F,D)$.
Then there exist $\alpha_1 \in \W_1(C,A)$, $\alpha_2 \in \W_2(C,A)$, $\beta_1 \in \W_1(F,D)$ and $\beta_2 \in \W_2(F,D)$ such that $\alpha=\alpha_1+\alpha_2$ and $\beta=\beta_1+\beta_2$, hence 
\[\alpha \oplus \beta = (\alpha_1+\alpha_2)\oplus(\beta_1+\beta_2).\]
Since $\W_1$ and $\W_2$ are closed under direct sums, we get that $\alpha \oplus \beta = (\alpha_{1} \oplus \beta_{1}) + (\alpha_{2} \oplus \beta_{2}) \in \W(C\oplus F,A\oplus D) \subseteq \W$. Therefore $\W$ is closed under direct sums and it is a weakly exact structure. \medskip

 (b) Recall that the join $\W_1 \vee_W \W_2$ is the smallest  weakly exact structure on $\A$ containing both $\W_1$ and $\W_2$. 
We have that $\W_1 \subset \W_1 + \W_2$ since $\eta_1 = \eta_1 + 0 \in \W_1 + \W_2$ for any $\eta_1  \in \W_1.$ Likewise for $\W_2$, so  $\W_1 + \W_2$ contains both $\W_1$ and $\W_2$, hence by definition of the join, $\W_1 \vee_W \W_2 \subseteq \W_1 + \W_2.$\\
To show the converse inclusion, let $\W$ be any weakly exact structure containing both $\W_1$ and $\W_2$. Since $\W$ satisfies the direct sum property (S), we have $\eta_1 \oplus \eta_2 \in \W$ for all $\eta_1 \in \W_1, \eta_2 \in \W_2.$
By definition of Baer sum and property (E2) and (E2)$^{op}$ for $\W$ we have $\eta_1 + \eta_2 \in \W$. 
This shows  $ \W_1 + \W_2 \subset \W $ for {\em all} $\W$ containing both $\W_1$ and $\W_2$, so this also holds for the smallest one (their intersection) : 
$\W_1 + \W_2 \subseteq \W_1 \vee_W \W_2.$
\end{proof}

\begin{proposition}\label{prop:atoms}
Let $\alpha$ be an Auslander-Reiten sequence in $\A$, and denote by $\E_{\alpha}=\{ X \oplus Y \; | \; X \in \E_{min}, Y \in add(\alpha) \}$  the (weakly) exact structure generated by $\alpha$. Then $\E_{\alpha}$ is an atom of both lattices $(\Ex({\A}), \subseteq, \wedge, \vee_E)$ and $(\Wex({\A}), \subseteq, \wedge, \vee_W)$.
\end{proposition}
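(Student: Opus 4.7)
The plan is to show that $\E_\alpha$ is an atom of $(\Wex(\A),\subseteq)$. Since $\Ex(\A)\subseteq\Wex(\A)$ as posets sharing the minimum $\E_{min}$, this will automatically yield the atom property in $\Ex(\A)$ as well. The strict inequality $\E_{min}\subsetneq\E_\alpha$ is immediate, since an Auslander-Reiten sequence is non-split by definition, so $\alpha\in\E_\alpha\setminus\E_{min}$.

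Let $\V\in\Wex(\A)$ satisfy $\E_{min}\subsetneq\V\subseteq\E_\alpha$ and pick $\eta\in\V\setminus\E_{min}$. By the description of $\E_\alpha$ we may assume $\eta\cong X\oplus\alpha^{\oplus n}$ with $X\in\E_{min}$ and $n\geq 1$ (the inequality $n\geq 1$ being forced by $\eta\notin\E_{min}$, together with indecomposability of $\alpha$ as a short exact sequence). The crux of the proof is to show $\alpha\in\V$. Once this is done, closure of $\V$ under direct sums together with the inclusion $\E_{min}\subseteq\V$ yields $X'\oplus\alpha^{\oplus m}\in\V$ for every split sequence $X'$ and every $m\geq 0$, whence $\E_\alpha\subseteq\V$ and therefore $\V=\E_\alpha$.

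To extract $\alpha$ from $\eta$ I plan to combine a pullback and a pushout. Write $\alpha=(A\rightarrowtail B\twoheadrightarrow C)$ and let $\iota:C\hookrightarrow\coker(X)\oplus C^{\oplus n}$ be the coordinate inclusion into one of the $C$-summands corresponding to an $\alpha$-copy. By axiom $(E2)^{op}$ the pullback $\eta\iota$ belongs to $\V$, and a direct calculation identifies $\eta\iota\cong\alpha\oplus\sigma$ for some split sequence $\sigma$ with zero cokernel. Next, let $\pi:A\oplus\ker(\sigma)\twoheadrightarrow A$ be the coordinate projection onto the $\alpha$-summand of the kernel. By axiom $(E2)$ the pushout $\pi(\eta\iota)$ belongs to $\V$, and via the bifunctor correspondence of Proposition \ref{prop:bijections} one checks that $[\eta\iota]=(\iota_A)_{\ast}[\alpha]$ in $\bE_{max}(C,A\oplus\ker(\sigma))$ (with $\iota_A:A\hookrightarrow A\oplus\ker(\sigma)$ the coordinate inclusion), whence $[\pi(\eta\iota)]=(\pi\circ\iota_A)_{\ast}[\alpha]=[\alpha]$. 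Thus $\pi(\eta\iota)$ represents $\alpha$ and $\alpha\in\V$.

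The main technical point is justifying the pullback/pushout computations and the identity $[\eta\iota]=(\iota_A)_{\ast}[\alpha]$, which essentially expresses the fact that a split short exact sequence represents the zero element in $\bE_{max}$. This is most transparent via the bifunctor formalism set up in Section \ref{section:weakly to bifunctors}, where bifunctoriality of pullback and pushout combined with the vanishing of $[\sigma]$ immediately gives the claim; alternatively, one can carry out a direct diagram chase using Lemma \ref{prop-2.12} and its dual, at the cost of slightly longer bookkeeping.
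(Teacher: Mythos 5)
Your argument for minimality is correct and is genuinely different from (and more explicit than) the paper's. The paper disposes of the whole proposition in one line by invoking the fact that an almost split sequence lies in the socle of the bifunctor $\Ext^1_\A(-,-)$, i.e.\ that multiplication by morphisms produces nothing new; you instead take an arbitrary $\V$ with $\E_{min}\subsetneq\V\subseteq\E_\alpha$, pick a non-split $\eta\cong X\oplus\alpha^{\oplus n}$ in $\V$, and recover $\alpha$ itself by a pullback along a coordinate inclusion followed by a pushout along a coordinate projection, using only $(E2)$, $(E2)^{op}$, closure under isomorphisms and direct sums, and the vanishing of split classes in the bifunctor. That computation is sound (Lemma \ref{prop-2.12}(iv) and its dual guarantee that the pulled-back and pushed-out conflations again lie in $\V$, and $[\eta\iota]=(\iota_A)_{\ast}[\alpha]$ follows from biadditivity of $\bE_{max}$ together with $[\sigma]=0$), and it has the virtue of working inside $\Wex(\A)$ directly, so that the statement for $\Ex(\A)$ follows as you note.

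However, there is a gap: nowhere in your proof do you use that $\alpha$ is an Auslander--Reiten sequence, and the hypothesis cannot be dispensed with. Your argument tacitly assumes that the displayed class $\E_{\alpha}=\{X\oplus Y \mid X\in\E_{min},\, Y\in\add(\alpha)\}$ is itself an element of $\Wex(\A)$, i.e.\ is closed under pushout along arbitrary $A\to A'$ and pullback along arbitrary $C'\to C$. For a general non-split sequence $\alpha$ this fails --- the structure generated by $\alpha$ is strictly larger than the displayed set and need not be an atom (in the example of Section \ref{section:example}, the sequence $\delta$ generates $\langle\alpha,\gamma,\delta\rangle$, not an atom). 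The closure of the displayed set under the bifunctor operations is exactly the ``socle'' property of almost split sequences that the paper's proof cites: any pullback of $\alpha$ along a non-retraction, and any pushout along a non-section, splits. You should either verify this closure or at least state explicitly that you are taking the description of $\E_\alpha$ as given; as written, your proof establishes minimality of $\E_\alpha$ among weakly exact structures contained in it, which is only half of the assertion that $\E_\alpha$ is an atom of the lattice.
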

\begin{proof}
This corresponds to the well-known property that the Auslander-Reiten sequences lie in the socle of the bifunctor $\Ext^1_\A(-,-)$, that is, multiplication with morphisms does not generate any new non-split sequences. 
\end{proof}

\subsection{Lattice of additive sub-bifunctors of ${\Ext}^{1}_{\A}$}
In Section \ref{section:subfunctors}, we discussed additive sub-bifunctors of $\Ext^{1}_{\A}:=\bE_{max}=\Ext^1_{\E_{max}}$ and  closed additive sub-bifunctors,  and we denote these classes respectively by $\Bf(\bE_{max})$ and $\Cbf(\A)$. In this section, we construct lattice structures of both classes.

\begin{theorem} \label{theorem:latticeBiFun}
The additive sub-bifunctors of $\bE_{max}$ form a lattice \[(\Bf(\bE_{max}), \leq, \wedge, \vee_{bf}).\]
\end{theorem}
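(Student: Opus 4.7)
The plan is to exploit the fact that $\Bf(\A)$ is an abelian category of $\A$-$\A$-bimodules, as noted in Remark \ref{remark:Ext1}, so that the sub-objects of any fixed object (in this case $\bE_{max}$) form a complete modular lattice in the standard way. Concretely, I would define the meet as pointwise intersection and the join as pointwise sum of abelian subgroups, and then verify that these operations land in $\Bf(\bE_{max})$ and satisfy the axioms of Definition \ref{def:lattice}.

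First, given $F, F' \in \Bf(\bE_{max})$, I would set $(F \wedge F')(C,A) := F(C,A) \cap F'(C,A)$ for all $A, C \in \A$. This is an abelian subgroup of $\bE_{max}(C,A)$, and for any morphisms $a: A \to A'$ and $c: C' \to C$, the restriction of $\bE_{max}(c,a)$ maps $F(C,A)$ into $F(C',A')$ and $F'(C,A)$ into $F'(C',A')$, hence sends the intersection into the intersection; thus $F \wedge F'$ is a sub-bifunctor. Additivity in each argument follows because $F$ and $F'$ are additive and intersection distributes over the direct-sum decompositions $\bE_{max}(C_1 \oplus C_2,A) = \bE_{max}(C_1,A) \oplus \bE_{max}(C_2,A)$ (and similarly in the other variable).

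Next, I would define the join by the pointwise subgroup sum: $(F \vee_{bf} F')(C,A) := F(C,A) + F'(C,A)$, the subgroup of $\bE_{max}(C,A)$ generated by elements from either summand. Functoriality is verified as before: $\bE_{max}(c,a)$ sends $\xi + \xi' \in F(C,A) + F'(C,A)$ to $\bE_{max}(c,a)(\xi) + \bE_{max}(c,a)(\xi') \in F(C',A') + F'(C',A')$. For additivity, the identity $(H_1 \oplus K_1) + (H_2 \oplus K_2) = (H_1 + H_2) \oplus (K_1 + K_2)$ inside a direct sum of abelian groups, combined with the additivity of $F$ and $F'$, implies that $F \vee_{bf} F'$ preserves finite biproducts in each argument.

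Finally, the lattice axioms -- associativity and commutativity of $\wedge$ and $\vee_{bf}$, together with the absorption laws -- reduce to the corresponding statements for intersection and sum of subgroups of an abelian group, evaluated pointwise. The same pointwise construction works for arbitrary families, so the lattice is in fact complete; it is bounded with minimum $\bE_{min}$ and maximum $\bE_{max}$. The only step requiring any care, and which I regard as the main (minor) technical point, is checking that the pointwise sum and intersection are genuine additive sub-bifunctors rather than mere subgroup-valued assignments; once this bifunctoriality and additivity are in place, the remaining verifications are routine.
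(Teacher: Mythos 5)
Your proposal is correct and follows essentially the same route as the paper: the meet is the pointwise intersection and the join $\vee_{bf}$ is the pointwise sum of abelian subgroups, with the (routine but worth stating) check that these assignments are again additive sub-bifunctors of $\bE_{max}$. The paper additionally remarks that this join agrees with the intersection of all common upper bounds, but that is a supplementary observation rather than a different argument.
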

\begin{proof}
For $F, F' \in \Bf(\bE_{max})$, we write $F \leq F'$ if $F$ is a sub-bifunctor of $F'$. \\
The meet  of $F$ and $F'$ is given by the sub-bifunctor $F \wedge F'$ of $\bE_{max}$ satisfying 
\[(F \wedge F')(C,A)=F(C,A) \cap F'(C,A)  \mbox { for all } A, C \in \A. \] 
The join is given by the sub-bifunctor $F+F' \; =\; F \vee_{bf} F'$ of $\bE_{max}$ satisfying 
\[(F \vee_{bf} F')(C,A)=F(C,A) + F'(C,A)  \mbox { for all } A, C \in \A, \] 
where the sum is the sum of abelian subgroups of $\bE_{max}(C,A)$.
Since $\Bf(\bE_{max})$ has a maximal element $\bE_{max}$, the statement follows from Lemma \ref{lem: meet_join_dual}.
\end{proof}

\subsubsection{Lattice of closed additive sub-bifunctors}

As discussed in Proposition \ref{prop:bijections}, for any additive category $\A$ there is a bijection between exact structures and closed additive sub-bifunctors of $\bE_{max}$. We already  know that the exact structures form a lattice \cite[Theorem 5.3]{BHLR}. In this section we define a lattice structure on  the class $\Cbf(\A)$ of closed additive sub-bifunctors of $\bE_{max}$.

\begin{lemma} \cite[corollary 1.5]{DRSS}\label{lem:intersection-closed-sub-bifunctor}
Consider a family $\{F_{i}\}_{i \in I}$ of closed sub-bifunctors of $\bE_{max}$. Then the intersection $\cap_{i \in I} F_{i}$ is a closed sub-bifunctor of $\bE_{max}$bifunctor, given by $\{\cap F_{i}\}(C,A)=\cap \{F_{i}(C,A)\}$ on objects.
\end{lemma}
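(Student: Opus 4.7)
The plan is to first establish that the objectwise intersection $F := \bigcap_{i \in I} F_i$, defined by $F(C,A) := \bigcap_{i \in I} F_i(C,A)$, is an additive sub-bifunctor of $\bE_{max}$, and then to verify that it satisfies the closedness condition of Definition \ref{definition:closed}. Since each $F_i(C,A)$ is a subgroup of the abelian group $\bE_{max}(C,A)$, their intersection is also a subgroup. Functoriality in both arguments is inherited: given $a\colon A\to A'$ and $c\colon C'\to C$, the maps $a_*$ and $c^*$ on $\bE_{max}$ restrict to each $F_i$, hence to $F$; bi-additivity of the restricted maps is immediate.

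For the closedness property, my preferred route is to use the poset isomorphism $\Psi$ from Proposition \ref{prop:bijections}. A direct check of the definition of $\Psi$ gives $\Psi(F) = \bigcap_{i \in I} \Psi(F_i)$, since a short exact sequence $(j,e)$ represents an element of $F(C,A)$ if and only if it represents an element of every $F_i(C,A)$, if and only if $(j,e) \in \Psi(F_i)$ for all $i$. Each $\Psi(F_i)$ is an exact structure by the proposition. Intersections of exact structures are themselves exact — this is the same argument as in Lemma \ref{lem:weakly-exact-intersection} restricted to $\Ex(\A) \subseteq \Wex(\A)$, since verifying axioms (E1) and (E1)$^{\text{op}}$ on the intersection only uses that they hold for each $\mathcal{E}_i$. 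Thus $\Psi(F)$ lies in $\Ex(\A)$, so by the restricted bijection $F = \Phi(\Psi(F))$ lies in $\Cbf(\A)$, i.e.\ is closed.

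Alternatively one could argue directly: exactness of the long sequence at $\Hom(X,A)$, $\Hom(X,B)$ and $\Hom(C,X)$, $\Hom(B,X)$ is a statement about the kernel-cokernel pair $(j,e)$ itself and is independent of the sub-bifunctor. Exactness at $\Hom(X,C)$ holds because the kernel of the connecting map $\Hom(X,C) \to F(X,A)$, $f \mapsto f^*(j,e)$, is the set of $f$ for which $f^*(j,e)$ splits — a condition valid in $\bE_{max}$, hence the same as the kernel of $\Hom(X,C)\to F_i(X,A)$ for any $i$, which by closedness of $F_i$ equals the image of $\Hom(X,B)\to\Hom(X,C)$. Exactness at $F(X,A)$ is the key step: if $\eta' \in F(X,A)$ and $j_*\eta' = 0$ in $F(X,B)$, then $j_*\eta' = 0$ already in $F_1(X,B)$, so by closedness of $F_1$ we can write $\eta' = f^*\zeta$ for some $f\colon X\to C$ and $\zeta \in F_1(C,A)$ representing the class of $(j,e)$. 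But this equation holds in $\bE_{max}(X,A)$, hence in each $F_i(X,A)$, so $f^*$ exhibits $\eta'$ as lying in the image of $\Hom(X,C) \to F(X,A)$. The dual sequence is handled symmetrically.

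The main potential obstacle is the last step of the direct argument — ensuring that a witness $f$ provided by closedness of some single $F_i$ can be reused uniformly for all $i$ — but this is resolved by noting that the witnessing equation is an equation in the ambient group $\bE_{max}(X,A)$ and so passes to every $F_i$ containing both elements. The approach via the bijection $\Psi$ avoids this issue by deferring it to the corresponding fact for exact structures, which is where I would prefer to place the burden.
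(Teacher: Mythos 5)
The paper offers no proof of this lemma at all --- it is quoted directly from \cite[Corollary 1.5]{DRSS} --- so any comparison is with that reference rather than with an argument in the text. Your preferred route is correct and fits well with the machinery the paper has already set up: the identity $\Psi(\bigcap_i F_i)=\bigcap_i\Psi(F_i)$ is immediate from the definition of $\Psi$, each $\Psi(F_i)$ lies in $\Ex(\A)$ by Proposition \ref{prop:bijections}, an intersection of exact structures is exact (the exact-structure analogue of Lemma \ref{lem:weakly-exact-intersection}, i.e.\ \cite[Lemma 5.2]{BHLR}), and the restricted bijection $\Ex(\A)\leftrightarrow\Cbf(\A)$ then forces $F=\Phi(\Psi(F))$ to be closed. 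What this buys you is that all the homological content is delegated to two facts the paper already records; the cost is that it is less self-contained than a direct verification of Definition \ref{definition:closed}.

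Your alternative direct argument, however, checks the wrong position of the sequence. As the paper itself notes in Definition \ref{definition:closed}, the six-term sequences are automatically exact everywhere \emph{except} at $F(X,B)$ (resp.\ $F(B,X)$); exactness at $\Hom(X,C)$ and at $F(X,A)$, which is where you concentrate your effort, holds for an arbitrary additive sub-bifunctor. The genuine content is at $F(X,B)$: given $\xi\in\bigcap_i F_i(X,B)$ with $e_*\xi=0$, closedness of each $F_i$ produces a preimage $\eta_i\in F_i(X,A)$ with $j_*\eta_i=\xi$, but these preimages may a priori depend on $i$, and one needs a single $\eta\in\bigcap_i F_i(X,A)$. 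This can be repaired: any two such preimages differ by an element of $\ker\bigl(j_*\colon\Ext^1_{\A}(X,A)\to\Ext^1_{\A}(X,B)\bigr)$, which equals the image of $f\mapsto f^*\delta$ for $\delta$ the class of the given conflation, and since $\delta$ lies in every $F_i(C,A)$ these correction terms lie in every $F_i(X,A)$; hence $\eta_1=\eta_i-f_i^*\delta\in F_i(X,A)$ for all $i$. Your "main potential obstacle" paragraph identifies essentially this phenomenon but attaches it to the automatic position rather than to $F(X,B)$, so as written the direct argument has a gap. Since you explicitly rest the proof on the $\Psi$-route, the proposal as a whole stands, but the alternative should either be fixed as above or dropped.
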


\begin{remark}
If $F$ and $F'$ are closed bifunctors in $\Cbf(\A)$ then their sum $F+F'$ is the sub-bifunctor of $\bE_{max}$
given by $\{F+F'\}(C,A)=F(C,A)+F'(C,A)$ on objects. Note that the sum of closed sub-bifunctors is not always closed.
\end{remark}

\begin{theorem} \label{theorem:latticeCbf}
For  an additive category $\A$, the closed additive sub-bifunctors of $\bE_{max}$ form a complete bounded lattice $(\Cbf(\A), \leq, \wedge, \vee_{cbf})$. 
\end{theorem}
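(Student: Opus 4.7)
The plan is to closely mimic the proof of Theorem \ref{theorem:latticeBiFun} for all additive sub-bifunctors, while handling the subtlety raised in the preceding remark: the naive sum $F+F'$ of two closed sub-bifunctors need not be closed, so the join in $\Cbf(\A)$ cannot simply be defined as a pointwise sum.

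First I would fix the partial order on $\Cbf(\A)$ as inclusion of sub-bifunctors: $F \leq F'$ iff $F(C,A) \subseteq F'(C,A)$ for all $A,C \in \A$. For the meet of an arbitrary family $\{F_i\}_{i\in I} \subseteq \Cbf(\A)$, I would take the pointwise intersection
\[ \Bigl(\bigwedge_{i\in I} F_i\Bigr)(C,A) \;=\; \bigcap_{i \in I} F_i(C,A).\]
By Lemma \ref{lem:intersection-closed-sub-bifunctor}, this intersection is again a closed sub-bifunctor of $\bE_{max}$, and it is by construction the largest closed sub-bifunctor contained in every $F_i$. This shows $\Cbf(\A)$ is a complete meet-semilattice.

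For the join, I would sidestep the failure of closedness for sums by the usual trick: define
\[ \bigvee_{cbf, \, i\in I} F_i \;=\; \bigwedge\, \{\, G \in \Cbf(\A) \mid F_i \leq G \text{ for all } i \in I \,\}.\]
The family over which we take the meet is non-empty because it always contains $\bE_{max}$, and by the preceding paragraph this meet lies in $\Cbf(\A)$. By construction, this is the smallest closed sub-bifunctor containing every $F_i$, hence a genuine join. Combined with the associativity and absorption verifications (which follow formally from the universal characterization of $\wedge$ and $\vee_{cbf}$ as infima and suprema in the poset $(\Cbf(\A), \leq)$), this yields a complete lattice structure.

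Finally, for boundedness, the minimum is the zero sub-bifunctor $\bE_{min} = \Phi(\E_{min})$, whose value on every pair of objects is the trivial subgroup generated by the class of the split short exact sequence, and the maximum is $\bE_{max}$ itself; both are plainly closed. The main conceptual point, rather than the main obstacle, is that $\vee_{cbf}$ differs in general from the join $\vee_{bf}$ of Theorem \ref{theorem:latticeBiFun}, so $\Cbf(\A)$ is a complete sub-\emph{poset} but not a sub-\emph{lattice} of $\Bf(\bE_{max})$; this is the bifunctor counterpart of the phenomenon already observed for $\Ex(\A) \subseteq \Wex(\E_{max})$ in Remark \ref{remark:weakly-exact-not-sublattice}, and under the bijection of Proposition \ref{prop:bijections} the two lattice isomorphisms $\Ex(\A) \cong \Cbf(\A)$ and $\Wex(\E_{max}) \cong \Bf(\bE_{max})$ are compatible with the joins just defined.
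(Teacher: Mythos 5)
Your proposal is correct and follows essentially the same route as the paper: the meet is the pointwise intersection (closed by Lemma \ref{lem:intersection-closed-sub-bifunctor}), the join is the meet of all closed sub-bifunctors containing the given ones (non-empty since $\bE_{max}$ qualifies), and boundedness comes from $\bE_{min}$ and $\bE_{max}$. Your closing observation that $\vee_{cbf}$ differs from $\vee_{bf}$, so that $\Cbf(\A)$ is a subposet but not a sublattice of $\Bf(\bE_{max})$, is exactly the content of the remark the paper places immediately after this theorem.
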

\begin{proof}
The meet is defined by $F \wedge F' = F \cap F'$ and 
Lemma \ref{lem:intersection-closed-sub-bifunctor} ensures that $\Cbf(\A)$ forms a complete meet-semilattice. Lemma \ref{lem: meet_join_dual} turns it into a complete lattice, which is bounded by $\bE_{min}$ below and $\bE_{max}$ above.
\end{proof}

\begin{remark}
The closed sub-bifunctors (\Cbf($\A$), $\leq$) form a subposet of \\ $(\Bf(\bE_{max}), \leq)$. However,  $(\Cbf(\A, \leq, \wedge, \vee_{cbf})$ is not a sublattice of \\ $\Bf(\bE_{max}), \leq, \wedge, \vee_{bf})$ because their joins are different. In fact, for $F, F' \in \Cbf(\A),$ the join  
$F \vee_{bf} F'= F+F'$ is not necessarily closed. As discussed in Remark \ref{remark:weakly-exact-not-sublattice}, the join of $<\alpha>$ with $<\gamma>$ in $\Bf(\bE_{max})$ is $<\alpha,\gamma>$ which is not closed. The join of  $<\alpha>$ with $<\gamma>$, in $\Cbf(\A)$ is $<\alpha,\gamma,\delta>$.  
In general, for $F, F' \in \Cbf(\A)$ we have that $F \vee_{bf} F' \leq F \vee_{cbf} F'$.
\end{remark}

\subsection{Lattice of bimodules over the Auslander algebra}
We return now to the study of the bimodule $\B$ over the Auslander algebra $A$ defined in Section \ref{sec:bimodules}.
As is the case for any module over a ring, recall that the set $\Bim(\B)$ of sub-bimodules of $\B$
forms a complete bounded modular lattice $$(\Bim(\B), \leq, {\wedge}_{Bim}, {\vee}_{Bim}),$$
where the meet is given by intersection and the join is given by the sum $N + N'$ of sub-bimodules.

\begin{definition}\label{def:evaluation map}
An element $N\in \Bim(\B)$ is said to be a \emph{closed} bimodule if  there exists a \emph{closed} sub-bifunctor $F$ of  $\Ext^{1}_{\E_{max}}$ such that $Ev_X(F)=N$ where 
\[Ev_X: \Cbf(\mathcal{A}) \longrightarrow \Bim(\B), \quad F \mapsto F(X,X)\]
is the evaluation at the object $X \in \A.$
\end{definition}
\begin{lemma}
The intersection of two closed sub-bimodules of $\B$ is again closed.
\end{lemma}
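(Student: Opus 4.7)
The plan is to reduce the statement directly to the analogous fact at the level of closed sub-bifunctors, namely Lemma \ref{lem:intersection-closed-sub-bifunctor}, via the evaluation map $Ev_X$ of Definition \ref{def:evaluation map}.

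More precisely, suppose $N_1, N_2 \in \Bim(\B)$ are closed. By the definition of closed sub-bimodule, there exist closed sub-bifunctors $F_1, F_2 \in \Cbf(\A)$ with $Ev_X(F_i) = F_i(X,X) = N_i$ for $i = 1, 2$. First I would invoke Lemma \ref{lem:intersection-closed-sub-bifunctor} to conclude that $F_1 \cap F_2$ is again a closed sub-bifunctor of $\bE_{max}$, with $(F_1 \cap F_2)(C,A) = F_1(C,A) \cap F_2(C,A)$ for all $A,C \in \A$.

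Evaluating this identity at $(C,A) = (X,X)$ gives
\[
Ev_X(F_1 \cap F_2) = (F_1 \cap F_2)(X,X) = F_1(X,X) \cap F_2(X,X) = N_1 \cap N_2.
\]
Hence $N_1 \cap N_2$ is the image under $Ev_X$ of a closed sub-bifunctor, so it is closed by Definition \ref{def:evaluation map}.

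There is no real obstacle here; the only subtlety worth flagging is that closedness of a sub-bifunctor is an objectwise condition (middle-exactness of $F(X,-)$ and $F(-,X)$ for every $X$), so the fact that intersection of closed sub-bifunctors is closed is purely a diagram-chasing statement at each object, already recorded in Lemma \ref{lem:intersection-closed-sub-bifunctor}. Once that is in hand, the evaluation functor $Ev_X$ preserves intersections, and the conclusion is immediate.
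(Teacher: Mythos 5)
Your proof is correct and follows essentially the same route as the paper's: both arguments lift the two closed sub-bimodules to closed sub-bifunctors, apply Lemma \ref{lem:intersection-closed-sub-bifunctor} to get closedness of the intersection of the bifunctors, and then evaluate at $(X,X)$ to identify the result with $N_1 \cap N_2$. No differences worth noting.
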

\begin{proof}
Let $N$ and $P$ be two closed sub-bimodules of $\B$ such that there exists two closed sub-bifunctors $F$  and $G$ satisfying $\Phi(F)=N$ and $\Phi(G)=P$. We consider the sub-bifunctor $H$ of $\Ext^1_{\E_{max}}$ given by the meet of $F\wedge G= H$. By Lemma \ref{lem:intersection-closed-sub-bifunctor}, H is closed. Since $N\cap P = F(X,X)\cap G(X,X)= H(X,X),$ the intersection is  a closed sub-bimodule of $\B$.
\end{proof}

\begin{theorem} The subset $\Cbim(\B)$ of closed sub-bimodules of $\B$ forms  a complete (and, in particular, bounded) lattice $(\Cbim(\B),\subseteq, \cap, {\vee}_{Cbim}).$

\end{theorem}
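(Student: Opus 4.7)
The plan is to transfer the complete bounded lattice structure of $\Cbf(\A)$, established in Theorem \ref{theorem:latticeCbf}, to $\Cbim(\B)$ via the evaluation map $Ev_X$ of Definition \ref{def:evaluation map}. The key observation, made possible by the additively finite Krull--Schmidt hypothesis, is that every object of $\A$ is a direct summand of some power of $X = X_1 \oplus \cdots \oplus X_n$, hence every additive bifunctor $\A^{op} \times \A \to \Ab$ is completely determined, up to natural isomorphism, by its value at $(X,X)$ together with its structure as a bimodule over the Auslander algebra $A=\End(X)$. This yields an order-preserving bijection between $\Bf(\bE_{max})$ and $\Bim(\B)$, and by the very definition of $\Cbim(\B)$ this restricts to an order-preserving bijection $Ev_X : \Cbf(\A) \to \Cbim(\B)$.

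Next, I would define the meet in $\Cbim(\B)$ by intersection: this is justified by the lemma preceding the theorem, which shows $\Cbim(\B)$ is closed under intersections of two elements; the same argument works verbatim for arbitrary families, appealing to Lemma \ref{lem:intersection-closed-sub-bifunctor} pulled across the bijection. For the join, I would set
\[ N \vee_{Cbim} N' \;=\; \bigcap \{M \in \Cbim(\B) \mid N \subseteq M,\; N' \subseteq M\},\]
which is well-defined because the collection is nonempty (it contains $\B = Ev_X(\bE_{max})$) and intersections of closed sub-bimodules are closed. Under the bijection $Ev_X$, this join corresponds exactly to the join $\vee_{cbf}$ on $\Cbf(\A)$ from Theorem \ref{theorem:latticeCbf}, so the absorption, associativity and commutativity axioms of Definition \ref{def:lattice} are inherited.

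Boundedness follows immediately: the minimum is $\{0\} = Ev_X(\bE_{min})$ and the maximum is $\B = Ev_X(\bE_{max})$. Completeness is obtained in the standard way from the existence of arbitrary meets and a maximum element: for any family $\{N_i\}_{i\in I} \subseteq \Cbim(\B)$, the meet is $\bigcap_i N_i$ and the join is $\bigcap \{M \in \Cbim(\B) \mid N_i \subseteq M \text{ for all } i\}$, both well-defined by the arguments above.

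The only delicate point is to ensure that closedness of a sub-bifunctor, which is defined via a long exact sequence condition at \emph{every} object $Y \in \A$, really is preserved and reflected by the evaluation functor at $X$. This is where the additive finiteness and Krull--Schmidt hypotheses are essential: since every $Y$ is a summand of some $X^n$ and all the functors involved are additive, the required exact sequences at $Y$ are direct summands of those at $X^n$, which in turn decompose into copies of the sequences at $X$. So closedness at $X$ suffices, and the bijection $Ev_X$ is indeed an isomorphism of lattices between $(\Cbf(\A), \subseteq, \wedge, \vee_{cbf})$ and $(\Cbim(\B), \subseteq, \cap, \vee_{Cbim})$, which completes the proof.
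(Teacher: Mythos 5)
Your proof is correct, but it takes a different route from the paper. The paper's own proof is entirely intrinsic: it never invokes the evaluation map, and simply checks the lattice axioms directly on $\Cbim(\B)$ --- the meet is the intersection (which is closed by the lemma immediately preceding the theorem), the join of $N$ and $P$ is $\bigcap\{R \in \Cbim(\B) \mid N \subseteq R,\ P \subseteq R\}$ (well defined because $\B$ itself is closed), boundedness comes from $0$ and $\B$, and completeness from the same recipe applied to arbitrary families. You instead transport the complete bounded lattice structure of $\Cbf(\A)$ from Theorem \ref{theorem:latticeCbf} across the evaluation map $Ev_X$, using the additively finite Krull--Schmidt hypothesis to see that $Ev_X$ is an order-isomorphism $\Bf(\bE_{max}) \to \Bim(\B)$ and hence restricts to a bijection $\Cbf(\A) \to \Cbim(\B)$ (surjectivity holds by the very definition of a closed bimodule, injectivity by the decomposition of $X$ into indecomposables). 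This is logically sound and not circular, though it front-loads the content of Theorem \ref{iso 3}, which the paper only proves afterwards; what it buys you is that the lattice axioms and the identification of the join come for free from the bifunctor side, and your explicit remark that arbitrary intersections of closed sub-bimodules are closed (via Lemma \ref{lem:intersection-closed-sub-bifunctor}) actually fills in a detail the paper's completeness argument glosses over. Your final paragraph about closedness being ``reflected'' by $Ev_X$ is harmless but not strictly needed: with the paper's definition of $\Cbim(\B)$ as the image of $\Cbf(\A)$ under $Ev_X$, injectivity of $Ev_X$ on all of $\Bf(\bE_{max})$ already guarantees that the unique preimage of a closed bimodule is the closed bifunctor witnessing its closedness.
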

\begin{proof}
This class is a poset ordered by inclusion. 
Let $\{N_{\lambda} \}_{\lambda \in \Lambda}$ be a family of weakly exact structures in $\Cbim(B)$. Their meet is given by  the associative, commutative intersection of modules  $\underset{\lambda \in \Lambda} \cap  N_{\lambda}$. Since the lattice has a minimal element $0$ and a maximal element $\B$, it is a bounded lattice. By Lemma \ref{lem: meet_join_dual}, it is a complete lattice.
\end{proof}


In the setting of this subsection, the bimodule $\B=\bE_{max}(X,X)$ is finite-dimensional, thus $\B$ and all of its submodules have a non-zero socle. We know from Proposition \ref{prop:atoms} that the Auslander-Reiten sequences lie in the socle of the bimodule $B$, and since all non-projective objects admit an Auslander-Reiten sequence in $\A$ ending there, 
one can derive that the socle is precisely formed by all Auslander-Reiten sequences in $\A$. Based on Auslander's concept of defects, Enomoto shows in \cite{En18} that the lattice $\Cbim(\B)$ is an atomic lattice, in fact it is a boolean lattice determined by its atoms, the Auslander-Reiten sequences in $\A$ (see also \cite[Theorem 2.26]{FG}).

Reformulated in module-theoretic terms, that means that the closed sub-bimodules of $B=\bE_{max}(X,X)$ are uniquely determined by their socle, and for every choice of elements in the socle, there is a unique closed sub-bimodule of $B$ having precisely these elements as its socle. If the socle is formed by a set $S$ of Auslander-Reiten sequences, we can thus denote by $\bE(S)$ the subbimodule of $B$ determined by $S$. For all elements $\sigma \in S$, denote by $\bE_\sigma $ the bimodule corresponding to the exact structure $\E_\sigma$ introduced in Proposition \ref{prop:atoms}.  
Since the lattice $\Cbim(\B)$ is atomic, we conclude that
\[ \bE(S) = \bigvee_{\sigma \in S}\bE_\sigma .\]
There may be several submodules of $\B$ with the same socle $S$, but only one of them is closed. 
As explained in the proof of \cite[Theorem 2.26]{FG}, this closed submodule with  socle $S$ corresponds to a Serre subcategory $\S$ generated by the simple objects contained in the set $S$. All other submodules of $\B$ with socle $S$ correspond to certain subcategories of $\S,$ but only the closed one is given by the abelian length category formed by all extensions of its simple objects. In other words, $\bE(S)$ is maximal, so
we derive the following result which  is illustrated nicely in the example in Section \ref{section:example}. It is also shown independently for Nakayama algebras in \cite[Theorem 6.9]{BHT}.

\begin{proposition}\label{prop:socle}
For every set  $S$ of Auslander-Reiten sequences, the closed bimodule $\bE(S)$ of $\B$ introduced above  is the maximal submodule of $\B$ whose socle is $S$.
\end{proposition}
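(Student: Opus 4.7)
The plan is to transport the statement to the abelian category of defects, where the assertion becomes a transparent fact about length categories and the subcategories generated by a prescribed set of simple objects. More precisely, by the lattice isomorphism established earlier (relating sub-bimodules of $\B$, additive sub-bifunctors of $\bE_{max}$, and topologizing subcategories of $\defff \bE_{max}$), every submodule $N \leq \B$ corresponds to a topologizing subcategory $\cT_{N} \subseteq \defff \bE_{max}$, and $N$ is closed precisely when $\cT_{N}$ is a Serre subcategory of the defect category. Under this correspondence, the atoms of $\Bim(\B)$ lying in the socle of $\B$ (i.e. the Auslander--Reiten sequences, by Proposition \ref{prop:atoms} and the discussion preceding the statement) are matched with the simple objects of $\defff \bE_{max}$.

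First, I would fix a set $S$ of Auslander--Reiten sequences and let $\cS$ denote the set of simple objects in $\defff \bE_{max}$ corresponding to elements of $S$. The closed sub-bimodule $\bE(S)$ then corresponds to the Serre subcategory $\cT_{\bE(S)}$ generated by $\cS$; this is an abelian length category whose objects are precisely the finite iterated extensions of the simples in $\cS$. Next, I would take an arbitrary (not necessarily closed) sub-bimodule $N \leq \B$ with $\mathrm{soc}(N) = S$, and consider the topologizing subcategory $\cT_N$. Since $\cT_N$ is closed under subquotients and $N$ has socle exactly $S$, the simple objects contained in $\cT_N$ are exactly those of $\cS$.

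The core step is then to show $\cT_N \subseteq \cT_{\bE(S)}$. Since $\B$ is finite-dimensional, so is every object $M$ of $\cT_N$; thus $M$ has a finite composition series in $\defff \bE_{max}$ whose factors are simple. Every such simple factor, being a subquotient of $M \in \cT_N$ and hence lying in $\cT_N$, must belong to $\cS$. Therefore $M$ is a finite iterated extension of objects in $\cS$, so $M \in \cT_{\bE(S)}$. Translating back via the lattice isomorphism gives $N \subseteq \bE(S)$, which is the desired maximality.

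The only genuine subtlety—and the step I expect to be the main obstacle—is the finiteness/length argument needed to conclude that $\cT_N$ consists of iterated extensions of $\cS$. This uses that $\defff \bE_{max}$ is a length category in the additively finite Krull--Schmidt setting (equivalently, that $\B$ is a finite-dimensional bimodule over the Auslander algebra), which was the key technical input already exploited in \cite{En18} and \cite[Theorem 2.26]{FG}. With this finiteness in hand, the topologizing closure under subquotients forces every object of $\cT_N$ into the Serre closure of $\cS$, completing the proof.
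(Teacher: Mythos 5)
Your proof is correct and follows essentially the same route as the paper: the paper also deduces maximality by passing to the defect category, identifying $\bE(S)$ with the Serre subcategory generated by the simple defects corresponding to $S$, and noting that any other submodule with socle $S$ corresponds to a subquotient-closed subcategory of that Serre subcategory. The only difference is that you spell out the composition-series/length argument that the paper leaves implicit by citing the proof of \cite[Theorem 2.26]{FG}.
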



\subsection{Lattice of topologizing subcategories}


Topologizing subcategories of an abelian category $\cc$ form a complete lattice. The order is given by the canonical inclusion of categories and the meet is given by the usual intersection. This is a complete semi-lattice and, therefore, it has a canonical join operation upgrading it to a complete lattice. It is straightforward to check from the definitions that the join is given by the closure of the union by finite direct sums: $$\bigvee: Top(\cc)\times Top(\cc) \rightarrow Top(\cc), \quad (T, T')\longmapsto \oplus \{T\cup T'\}.$$ Since this lattice has a canonical minimal element, it is moreover bounded.
By definition, each Serre subcategory of an abelian category is topologizing. Thus, Serre subcategories form a subposet of the lattice of topologizing subcategories. By similar arguments this subposet admits a lattice structure, with the join given by the closure of the union by finite extensions. Since the closure of the union by finite direct sums is, in general, not extension-closed, the join of Serre subcategories in the lattice of topologizing subcategories is different from their join in the lattice of Serre subcategories. In other words, the lattice of Serre subcategories is a subposet, but not a sublattice of the lattice of all topologizing subcategories.

 Given a topologizing subcategory $\cc$ of the category $\coh(\ca)$, its topologizing subcategories in the sense of definition \ref{def:topologizing} form a lattice, which is an interval in the lattice of all topologizing subcategories in $\coh(\ca).$ Serre subcategories of $\cc$ form a lattice, which is an interval in the lattice of all Serre subcategories in $\coh(\ca).$ It is a subposet, but not a sublattice of the lattice of topologizing subctegories of $\cc.$

We formulate this observation explicitly in the case of the categories of defects of weakly extriangulated structures:


\begin{proposition}
Let $\A$ be an essentially small category and $(\mathbb{W}, \mathfrak{s})$ a weakly extriangulated structure on it, then the topologizing subcategories of $\defff \mathbf{W}$ form a complete lattice
\[(\mathbf{Top}(\mathbf{W}), \subseteq,\bigcap, \bigvee).\] Serre subcategories of $\defff \mathbf{W}$ also form a lattice, which is a subposet, but not a sublattice of $(\mathbf{Top}(\mathbf{W})).$
\end{proposition}

\subsection{Lattices of extriangulated and weakly extriangulated substructures}

Let $\A$ be an essentially small additive category. We consider the class of all weakly extriangulated structures on $\A$.


\begin{lemma}\label{Intersection of weakly extriangulated}

Let $\{ {\mathbb{W}}_{i} \}_{i \in \upomega} $ be a family of weakly extriangulated structures on $\A$. Then the intersection $\cap_{i \in \upomega} {\mathbb{W}}_{i}$ is also a weakly extriangulated structure.    
\end{lemma}

\begin{proof}
Similar to Lemma 5.2 of \cite{BHLR}.
\end{proof}

\begin{theorem}
Let $(\cA, \mathbb{W}, \mathfrak{s})$ be a weakly extriangulated category. Then all its weakly extriangulated substructures  form a complete lattice:
$$(\mathbf{WET}(\A), \leq, \bigwedge, \bigvee)$$

\end{theorem}
\begin{proof}
We consider the set $\mathbf{WET}(\A)$ of all the additive sub-bifunctors of $\mathbb{W}$ on the essentially small category $\A$. They are ordered by 
\[W\leq W' \Longleftrightarrow W(C,A) {\subseteq}_{Ab} W'(C,A) \mbox{ for all } A,C \in \A\]
that is, $W(C,A)$ is a subgroup of $W'(C,A)$ for every pair of objects in $\A$. 
It follows from \ref{Intersection of weakly extriangulated} that $(\mathbf{WET}(\A), \leq, \bigwedge)$ is a meet semi-lattice with the meet $(W \bigwedge W')(C, A)= W(C, A)\cap W'(C, A) ,\forall A, C\in \A$, by using the intersection of abelian groups. By Lemma \ref{Intersection of weakly extriangulated}, it is a complete meet semi-lattice.
By Lemma \ref{lem: meet_join_dual}, it is a complete lattice 
 with minimal element given by the split weakly extriangulated structure $\mathbb{W}_{min}$. 
\end{proof}

\begin{cor}
Let $(\cA, \mathbb{E}, \mathfrak{s})$ be an extriangulated category. Then all the additive sub-bifunctors of $\mathbb{E}$ form a complete (in paricular, bounded) lattice.
\end{cor}


\subsection{Isomorphims of lattices}

\begin{theorem}\label{iso 2}
Let $\A$ be an additive category. The map $\Phi: \W \mapsto \Ext^1_\W(-,-)$ induces a \emph{lattice isomorphism} \[(\Wex(\E_{max}),\subseteq, \cap, \vee_{\W}) \; \cong \;(\Bf(\bE_{max}), \leq, \wedge, \vee_{bf}).\]
\end{theorem}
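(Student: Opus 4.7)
The plan is to leverage Proposition \ref{prop:bijections}, which already establishes that $\Phi$ and $\Psi$ are mutually inverse poset isomorphisms between $\Wex(\E_{max})$ and $\Bf(\bE_{max})$. Since both sides carry lattice structures by Theorem \ref{thm:weakly-exact-form-local-lattice} and Theorem \ref{theorem:latticeBiFun}, and any order isomorphism between two lattices automatically preserves existing meets and joins, the result will essentially follow. The only substantive content is to check that the meet and join on each side correspond under $\Phi$, using the explicit formulas already derived.

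First I would verify compatibility with meets. By definition, for weakly exact structures $\W, \W' \in \Wex(\E_{max})$, the meet is $\W \cap \W'$, and the meet on the bifunctor side is the sub-bifunctor whose value at $(C,A)$ is $\Phi(\W)(C,A) \cap \Phi(\W')(C,A)$. Since equivalence classes of short exact sequences in $\W \cap \W'$ are precisely those classes lying in both $\Phi(\W)(C,A)$ and $\Phi(\W')(C,A)$, we get directly
\[
\Phi(\W \cap \W')(C,A) \;=\; \Phi(\W)(C,A) \,\cap\, \Phi(\W')(C,A) \;=\; (\Phi(\W) \wedge \Phi(\W'))(C,A).
\]

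Next I would check compatibility with joins. By Proposition \ref{prop:lattice-of-weakly-exact}, $\W \vee_W \W' = \W + \W'$, where the sum is formed by Baer sums of elements of $\W$ and $\W'$. On the bifunctor side, the join in $\Bf(\bE_{max})$ was defined in Theorem \ref{theorem:latticeBiFun} as the subgroup sum $F(C,A) + F'(C,A)$. The Baer sum of short exact sequences is precisely the group operation on the abelian group $\bE_{max}(C,A)$, so an element of $(\Phi(\W) + \Phi(\W'))(C,A)$ is of the form $\overline{\eta_1} + \overline{\eta_2}$ with $\eta_1 \in \W(C,A)$ and $\eta_2 \in \W'(C,A)$, which is exactly the definition of $\Phi(\W + \W')(C,A)$. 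Hence $\Phi(\W \vee_W \W') = \Phi(\W) \vee_{bf} \Phi(\W')$.

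Combined with the bijectivity from Proposition \ref{prop:bijections}, $\Phi$ is then a lattice isomorphism in the sense of Definition \ref{iso of lattices}. The main conceptual point that needs to be carried out carefully is the join compatibility: one must recognize that the sum of weakly exact structures defined via Baer sum of short exact sequences corresponds under $\Phi$ to the subgroup sum of the associated bifunctors, since both operations are instances of the same abelian group structure on $\bE_{max}(C,A)$. No further obstacle arises because all the necessary structural results (closure under direct sums, (E2) and (E2)$^{\mathrm{op}}$, and the explicit description of joins) have already been established in the preceding sections.
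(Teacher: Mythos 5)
Your proposal is correct and follows essentially the same route as the paper: invoke Proposition \ref{prop:bijections} for the poset isomorphism and then check compatibility of meets by the same pointwise intersection computation. The only difference is in the join: the paper simply observes that both joins are defined as intersections of upper bounds (so an order isomorphism preserving meets preserves them), whereas you verify it concretely by matching the Baer-sum description of $\W \vee_W \W'$ from Proposition \ref{prop:lattice-of-weakly-exact} against the subgroup-sum join of Theorem \ref{theorem:latticeBiFun} --- a slightly more explicit but equivalent argument.
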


\begin{proof}
We have already shown in Proposition \ref{prop:bijections} that $\Phi$
is an isomorphism of posets. Thus, it is a lattice isomorphism by Lemma \ref{lattice_iso = poset_iso}.
\end{proof}

\begin{theorem}\label{iso 3}
Consider the setting of an additively finite category $\A$ as in Section \ref{sec:bimodules} and the bimodule $\B$ over the Auslander algebra $A$ defined there.
Then the evaluation map  yields an isomorphism of lattices
\[Ev_X: \Bf(\bE_{max}) \longrightarrow \Bim(\B), \quad F \mapsto F(X,X)\]
\end{theorem}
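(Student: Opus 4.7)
The plan is to exploit the additive Morita-type equivalence induced by $X = X_1 \oplus \cdots \oplus X_n$ being an additive generator of $\A$. Under this equivalence, additive bifunctors $\A^{op} \times \A \to \Ab$ correspond to $A$-$A$-bimodules via evaluation at $(X,X)$, and sub-bifunctors of $\bE_{max}$ correspond to sub-bimodules of $\B$. The proof consists of showing that $Ev_X$ is well-defined, bijective, and preserves meets and joins.

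First I would check well-definedness. For an additive sub-bifunctor $F$ of $\bE_{max}$, the subgroup $F(X,X) \subseteq \B$ is stable under the actions $a_*$ and $c^*$ coming from any $a, c \in A = \End(X)$, since these maps are part of the bifunctor structure and $F$ is closed under them by the sub-bifunctor property. Thus $F(X,X)$ lies in $\Bim(\B)$. Order preservation $F \leq F' \Rightarrow F(X,X) \subseteq F'(X,X)$ is immediate.

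Next I would establish bijectivity. Injectivity follows from additivity combined with the Krull-Schmidt additively finite hypothesis: any $M, L \in \A$ decomposes as a finite direct sum of the $X_i$, and additivity of $F$ forces
\[
F(M, L) \cong \bigoplus_{k,l} F(X_{i_k}, X_{j_l}),
\]
where each summand $F(X_i, X_j)$ is canonically identified with the subgroup of $F(X, X)$ cut out by the primitive orthogonal idempotents $e_i, e_j \in A$ corresponding to $X_i, X_j$. Hence $F$ is entirely determined by $F(X,X)$. Surjectivity is obtained by reversing the construction: given $N \in \Bim(\B)$, define $F_N(M, L)$ by the analogous direct-sum formula viewed as a subgroup of $\bE_{max}(M, L)$, and verify that the resulting assignment is an additive sub-bifunctor of $\bE_{max}$ with $F_N(X,X) = N$. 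Finally, for the lattice structure: both meets are intersection, so $Ev_X(F \wedge F') = F(X,X) \cap F'(X,X) = Ev_X(F) \wedge_{Bim} Ev_X(F')$; and by Theorem \ref{theorem:latticeBiFun} the join in $\Bf(\bE_{max})$ is pointwise sum of abelian subgroups, while the join in $\Bim(\B)$ is sum of sub-bimodules, so $Ev_X(F \vee_{bf} F') = F(X,X) + F'(X,X) = Ev_X(F) \vee_{Bim} Ev_X(F')$.

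The main obstacle is the well-definedness and functoriality of the inverse assignment $N \mapsto F_N$, which amounts to reconstructing an additive bifunctor from its value at $(X, X)$ together with the $A$-$A$-bimodule structure. The direct-sum formula must be shown to be independent of the chosen decomposition of $M$ and $L$ (different isomorphisms to $\bigoplus X_{i_k}$ differ by units in $A$), and to extend naturally to arbitrary morphisms of $\A$, including non-split ones. Once these compatibilities are established, they are essentially the standard additive Yoneda/Morita recipe for the additively finite category $\A$ with generator $X$, but they require some care to carry out in full.
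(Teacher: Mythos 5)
Your proposal is correct and follows essentially the same route as the paper's proof: well-definedness from the sub-bifunctor property, injectivity and surjectivity via the idempotents $e_i$ attached to the indecomposable summands of $X$ together with additivity, and compatibility with the lattice operations. You are in fact slightly more complete than the paper, which only verifies the meet explicitly, whereas you also check that the pointwise sum of sub-bifunctors matches the sum of sub-bimodules; your closing remark correctly identifies the reconstruction of $F_N$ from $N$ (the standard additive Morita recipe for $\A = \add X$) as the step the paper also leaves implicit.
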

\begin{proof} It is easy to show that the map  $Ev_X$ is well defined,  morphism of posets, and morphism of lattices.
To construct an inverse map, consider a sub-bimodule $W$ of $_AB_A$. 
As $A =  \bigoplus_{i,j}\Hom(X_i,X_j)$ with $X=X_1 \oplus \cdots \oplus X_n$ we
can decompose $W$ as $W= \bigoplus_{i,j} e_j  W e_i$.
By additive extension to arbitrary direct sums of the $X_i$ one can then define a sub-bifunctor $\W$ of $\bE_{max}$ with $\W(X,X)= W$. 
\end{proof}

\begin{cor}\label{three large lattices} If $\mathcal{A}$ is an additively finite, Hom-finite Krull-Schmidt category then the three lattice structures we defined on $\Wex(\A)$, $\Bf(\A)$ and $\Bim(B)$ are isomorphic.
  
\end{cor}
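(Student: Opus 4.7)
The plan is to simply concatenate the two preceding lattice isomorphisms, once we observe that, under the additive-finite Krull--Schmidt hypothesis, the interval $\Wex(\E_{max})$ considered in Theorem \ref{iso 2} coincides with the whole class $\Wex(\A)$. Indeed, a Krull--Schmidt category is weakly idempotent complete, so by the discussion preceding Corollary \ref{cor:stable form maximum weakly exact} we have
\[ \W_{max} \;=\; \E_{max} \;=\; \E_{sta}, \]
and in particular every weakly exact structure on $\A$ is contained in $\E_{max}$. Hence $\Wex(\A) = \Wex(\E_{max})$ as lattices, with no change to meets and joins.

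Next I would invoke Theorem \ref{iso 2} to get a lattice isomorphism
\[ \Phi \colon (\Wex(\A),\subseteq,\cap,\vee_{\W}) \;\xrightarrow{\ \sim\ }\; (\Bf(\bE_{max}),\leq,\wedge,\vee_{bf}), \qquad \W\mapsto \Ext^{1}_{\W}(-,-), \]
and then Theorem \ref{iso 3} to get a lattice isomorphism
\[ \mathrm{Ev}_{X} \colon (\Bf(\bE_{max}),\leq,\wedge,\vee_{bf}) \;\xrightarrow{\ \sim\ }\; (\Bim(\B),\subseteq,\wedge_{Bim},\vee_{Bim}), \qquad F\mapsto F(X,X), \]
where $X = X_1\oplus\cdots\oplus X_n$ is the sum of a representative of each isomorphism class of indecomposables (here the additive-finiteness is used to ensure $X$ exists and that $\B = \bE_{max}(X,X)$ detects the whole bifunctor). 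Composing these two isomorphisms, the map $\W\mapsto \Ext^{1}_{\W}(X,X)$ yields the desired lattice isomorphism between $\Wex(\A)$ and $\Bim(\B)$, and transitivity of isomorphism of lattices gives the three-way equivalence stated in the corollary.

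There is essentially no obstacle: both underlying theorems have already been proved, and the only subtlety is the identification $\Wex(\A)=\Wex(\E_{max})$, which is immediate from Krull--Schmidt via Corollary \ref{cor:stable form maximum weakly exact}. The final write-up can therefore be kept to a few lines, simply citing Theorems \ref{iso 2} and \ref{iso 3} and noting that the composition of lattice isomorphisms is again a lattice isomorphism.
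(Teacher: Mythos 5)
Your proposal is correct and follows essentially the same route as the paper, whose proof is simply ``combine Theorems \ref{iso 2} and \ref{iso 3}.'' The extra step you include --- identifying $\Wex(\A)$ with $\Wex(\E_{max})$ via weak idempotent completeness of a Krull--Schmidt category --- is a point the paper leaves implicit (it is addressed earlier, in the discussion of Section \ref{sec:bimodules}), and making it explicit is a sensible precaution rather than a deviation.
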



\begin{theorem}\label{bijection-lattices}
Let $\A$ be an additive category. The map $\Phi: \E \mapsto \Ext^1_\E(-,-)$ induces a \emph{lattice isomorphism} between $(\Ex(\A), \subseteq, \cap, \vee)$ and $(\Cbf(\A), \leq, \wedge, \vee)$.
\end{theorem}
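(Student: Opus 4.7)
The plan is to leverage the earlier results as much as possible rather than redo the work from scratch. By Proposition~\ref{prop:bijections}, the map $\Phi$ restricts to a poset isomorphism $\Ex(\A)\to \Cbf(\A)$ (with inverse $\Psi$), so the underlying bijection and its monotonicity in both directions are already available. Moreover, both posets are complete bounded lattices by Theorem~\ref{thm:lattice} and Theorem~\ref{theorem:latticeCbf}. Thus the only task is to check that the binary operations on the two sides correspond under $\Phi$.

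First I would handle the meet. The meet in $\Ex(\A)$ is set-theoretic intersection $\E\cap\E'$, and the meet in $\Cbf(\A)$ is the intersection of bifunctors, which, by Lemma~\ref{lem:intersection-closed-sub-bifunctor}, is again closed. The same calculation as in the proof of Theorem~\ref{iso 2} applies verbatim to exact structures, since exact structures are in particular weakly exact: for any $A,C\in \A$,
\[
\Ext^1_{\E\cap\E'}(C,A)=\{\overline{(i,d)}\mid (i,d)\in \E\cap\E'\}=\Ext^1_\E(C,A)\cap\Ext^1_{\E'}(C,A).
\]
So $\Phi(\E\wedge\E')=\Phi(\E)\wedge\Phi(\E')$.

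For the join, both $\vee_E$ and $\vee_{cbf}$ are defined as the meet of the set of elements above the two given elements, so the preservation of joins follows formally from the preservation of meets together with the bijectivity of $\Phi$. Explicitly,
\begin{align*}
\Phi(\E\vee_E\E')
&=\Phi\!\left(\bigcap\{\cF\in\Ex(\A)\mid \E\subseteq\cF,\ \E'\subseteq\cF\}\right)\\
&=\bigwedge\{\Phi(\cF)\mid \cF\in\Ex(\A),\ \E\subseteq\cF,\ \E'\subseteq\cF\}\\
&=\bigwedge\{G\in\Cbf(\A)\mid \Phi(\E)\leq G,\ \Phi(\E')\leq G\}\\
&=\Phi(\E)\vee_{cbf}\Phi(\E'),
\end{align*}
where the second equality uses meet-preservation (extended from binary to arbitrary intersections via Lemma~\ref{lem:intersection-closed-sub-bifunctor}), and the third uses the bijection $\Ex(\A)\leftrightarrow\Cbf(\A)$ established in Proposition~\ref{prop:bijections} together with the fact that $\Phi$ is order-preserving with order-preserving inverse $\Psi$.

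The only potential obstacle is the passage from binary to arbitrary meets in the displayed calculation, which requires that arbitrary intersections of closed sub-bifunctors be again closed; this is exactly the content of Lemma~\ref{lem:intersection-closed-sub-bifunctor} (equivalently, the intersection of an arbitrary family of exact structures is an exact structure, as in \cite[Lemma~5.2]{BHLR}). Once this is in hand the argument is essentially formal, and combining meet and join preservation with the already-known poset isomorphism of Proposition~\ref{prop:bijections} gives the desired lattice isomorphism.
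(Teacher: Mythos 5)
Your proposal is correct and follows essentially the same route as the paper, whose proof simply defers to the argument for Theorem~\ref{iso 2}: verify meet-preservation by the pointwise intersection computation, then observe that the joins $\vee_E$ and $\vee_{cbf}$ are both defined as intersections of upper bounds, so join-preservation is formal given the order isomorphism of Proposition~\ref{prop:bijections} and the closure of arbitrary intersections from Lemma~\ref{lem:intersection-closed-sub-bifunctor}. Your write-up is in fact more explicit than the paper's about the join step, which is a welcome clarification rather than a deviation.
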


\begin{proof}
Same as for Theorem \ref{iso 2}. 
\end{proof}

\begin{theorem}\label{small iso 2} 
If $\mathcal{A}$ is an additively finite, Hom-finite Krull-Schmidt category then the two lattices $(\Cbf(\A), \leq, \wedge, \vee_{Cbf})$ and $(\Cbim(B), \subseteq, \cap, \vee_{Cbim})$ are isomorphic.
\end{theorem}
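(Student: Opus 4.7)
The plan is to obtain the isomorphism by restriction of the lattice isomorphism $Ev_X : \Bf(\bE_{\max}) \to \Bim(\B)$ of Theorem \ref{iso 3} to the subposets of closed objects on each side. No new construction is needed; one only has to check that the restriction has the correct image and that meets and joins are preserved in their restricted form.

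First I would verify that $Ev_X$ restricts to a bijection $\Cbf(\A) \to \Cbim(\B)$. By Definition \ref{def:evaluation map}, membership of $N$ in $\Cbim(\B)$ is exactly the condition that $N = Ev_X(F)$ for some $F \in \Cbf(\A)$, so the restriction of $Ev_X$ surjects onto $\Cbim(\B)$; injectivity is inherited from the injectivity of $Ev_X$ established in Theorem \ref{iso 3}. Order preservation in both directions is likewise inherited: $F \leq F' \Longleftrightarrow F(X,X) \subseteq F'(X,X)$ was shown as part of Theorem \ref{iso 3}, so the restricted bijection is already a poset isomorphism $\Cbf(\A) \cong \Cbim(\B)$.

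Next I would check preservation of meets. By Theorem \ref{theorem:latticeCbf} the meet in $\Cbf(\A)$ is intersection, and this is well-defined because the intersection of closed sub-bifunctors is closed (Lemma \ref{lem:intersection-closed-sub-bifunctor}); symmetrically, the meet in $\Cbim(\B)$ is intersection, which is closed by the lemma proved earlier in this subsection. Since $Ev_X$ preserves intersections at the level of $\Bf$ and $\Bim$, we obtain
\[
Ev_X(F \wedge F') = (F \cap F')(X,X) = F(X,X) \cap F'(X,X) = Ev_X(F) \cap Ev_X(F').
\]

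Finally I would handle joins, which is the only point requiring care. Both $F \vee_{Cbf} F'$ and $Ev_X(F) \vee_{Cbim} Ev_X(F')$ are defined as the intersection of all closed upper bounds in their respective lattices. Since the restricted bijection $Ev_X|_{\Cbf(\A)}$ preserves the order in both directions, it induces a bijection between the sets $\{G \in \Cbf(\A) \mid F \leq G,\ F' \leq G\}$ and $\{N \in \Cbim(\B) \mid Ev_X(F) \subseteq N,\ Ev_X(F') \subseteq N\}$. Applying meet-preservation to these (possibly infinite, but valid in the complete lattices established in Theorems \ref{theorem:latticeCbf} and in the preceding subsection) collections yields
\[
Ev_X(F \vee_{Cbf} F') = Ev_X(F) \vee_{Cbim} Ev_X(F').
\]
The main conceptual point, rather than a real obstacle, is to note that although the ambient join $\vee_{bf}$ in $\Bf(\bE_{\max})$ does \emph{not} restrict to the join in $\Cbf(\A)$ (the sum of closed sub-bifunctors need not be closed), this is irrelevant here because both joins $\vee_{Cbf}$ and $\vee_{Cbim}$ are already defined intrinsically inside their own complete lattices as meets of upper bounds, and meets transport through the restricted isomorphism.
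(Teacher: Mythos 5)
Your proposal is correct and follows essentially the same route as the paper: restrict the lattice isomorphism $Ev_X$ of Theorem \ref{iso 3} to the closed objects, inherit order- and meet-preservation, and observe that the joins $\vee_{Cbf}$ and $\vee_{Cbim}$ are both defined as intersections of closed upper bounds, so they transport through the order isomorphism. Your write-up is in fact somewhat more careful than the paper's, since you explicitly verify that the restriction is a bijection onto $\Cbim(\B)$ and flag that the ambient join $\vee_{bf}$ does not restrict.
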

\begin{proof}
As already verified in Theorem \ref{iso 3}, the evaluation map $Ev_X$ preserves the order and the meet-semilattice structure. By definition, its restriction to closed sub-bifunctors is a poset isomorphism with the lattice $(\Cbim(B), \subseteq, \cap, \vee_{Cbim})$. Thus, it is a lattice isomorphism by Lemma \ref{lattice_iso = poset_iso}. 
\end{proof}

\begin{cor} If $\mathcal{A}$ is an additively finite, Hom-finite Krull-Schmidt category then the three lattice structures defined above on $\Ex(\A)$, $\Cbf(\A)$ and $\Cbim(\B)$ are isomorphic.
\end{cor}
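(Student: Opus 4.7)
The plan is to proceed exactly as in Corollary \ref{three large lattices}, simply composing the two lattice isomorphisms established immediately before. From Theorem \ref{bijection-lattices}, the map $\Phi \colon \E \mapsto \Ext^{1}_{\E}(-,-)$ gives a lattice isomorphism $(\Ex(\A), \subseteq, \cap, \vee_{E}) \xrightarrow{\sim} (\Cbf(\A), \leq, \wedge, \vee_{Cbf})$. From Theorem \ref{small iso 2}, the evaluation map $Ev_X \colon F \mapsto F(X,X)$ gives a lattice isomorphism $(\Cbf(\A), \leq, \wedge, \vee_{Cbf}) \xrightarrow{\sim} (\Cbim(\B), \subseteq, \cap, \vee_{Cbim})$. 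Composition of lattice isomorphisms is again a lattice isomorphism, and applying this to $Ev_X \circ \Phi$ yields the desired three-way isomorphism.

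The only thing to verify beyond citing the two theorems is that the hypothesis ``$\A$ additively finite, Hom-finite, Krull-Schmidt'' is enough to invoke both results. Theorem \ref{bijection-lattices} is valid for an arbitrary additive category $\A$, so it applies here without further assumptions. Theorem \ref{small iso 2} requires precisely the hypothesis ``additively finite, Hom-finite, Krull-Schmidt'' in order to define the Auslander algebra $A = \End(X)$ with $X = X_1 \oplus \cdots \oplus X_n$ and the bimodule $\B = \bE_{max}(X,X)$, so its hypotheses are exactly those of the corollary. Hence both isomorphisms are available, and no further argument is needed.

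There is no genuine obstacle: the corollary is purely a bookkeeping statement assembling Theorems \ref{bijection-lattices} and \ref{small iso 2}, in complete parallel to how Corollary \ref{three large lattices} assembled Theorems \ref{iso 2} and \ref{iso 3}. The proof can be stated in a single sentence: \emph{Combine Theorems \ref{bijection-lattices} and \ref{small iso 2}.}
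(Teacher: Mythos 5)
Your proposal is correct and matches the paper's own argument exactly: the paper proves this corollary by combining Theorem \ref{bijection-lattices} with Theorem \ref{small iso 2}, just as you do. Your additional remark that the Krull--Schmidt/additively finite/Hom-finite hypothesis is needed only for the second isomorphism (the evaluation map $Ev_X$) is accurate and consistent with where those hypotheses appear in the paper.
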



\begin{proposition}
Let $(\cA, \mathbb{W}, \mathfrak{s})$ be a weakly extriangulated category. Then there is a lattice isomophism between the lattice of additive sub-bifunctors of $\mathbb{W}$ and the lattice of topologizing subcategories of $\defff \mathbb{W}.$
\end{proposition}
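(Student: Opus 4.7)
The plan is to construct mutually inverse order-preserving maps between the poset $\mathrm{Sub}(\mathbb{W})$ of additive sub-bifunctors of $\mathbb{W}$ (each equipped with the induced realization $\mathfrak{s}|_{\mathbb{W}'}$) and the poset $\mathrm{Top}(\defff \mathbb{W})$ of topologizing subcategories of $\defff \mathbb{W}$, and then observe that both maps preserve the lattice operations. In one direction, I would send a sub-bifunctor $\mathbb{W}' \subseteq \mathbb{W}$ to its defect subcategory $\defff \mathbb{W}'$; by the two corollaries immediately following the definition of topologizing subcategories in Section~5, this is indeed a topologizing subcategory of $\defff \mathbb{W}$, so the assignment $\Xi \colon \mathbb{W}' \mapsto \defff \mathbb{W}'$ is well-defined and obviously order-preserving.

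In the other direction, given $\mathcal{T} \in \mathrm{Top}(\defff \mathbb{W})$, I would set
\[
\mathbb{W}_\mathcal{T}(C, A) := \{\delta \in \mathbb{W}(C, A) \mid \text{the defect of any representative of } \mathfrak{s}(\delta) \text{ lies in } \mathcal{T}\}.
\]
Independence of the representative follows from Lemma~\ref{lemma:equivalence:wkc} and the naturality of the defect construction. The main step is to verify that $\mathbb{W}_\mathcal{T}$, together with the restriction of $\mathfrak{s}$, is a weakly extriangulated substructure of $(\ca, \mathbb{W}, \mathfrak{s})$. This rests on three subquotient properties of defects: the defect of a direct sum of two conflations is the direct sum of their defects; the defect of the pullback of a conflation along an arbitrary morphism is a subobject of the original defect; and the defect of the pushout along an arbitrary morphism is a quotient of the original. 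Combining these with closure of $\mathcal{T}$ under subquotients and finite direct sums yields closure of $\mathbb{W}_\mathcal{T}$ under Baer sums, pullbacks, and pushouts, so $\mathbb{W}_\mathcal{T}$ is an additive sub-bifunctor. Axioms (EA2) and (EA2)$^{\mathrm{op}}$ for $\mathfrak{s}|_{\mathbb{W}_\mathcal{T}}$ then reduce to the same axioms for $\mathfrak{s}$ together with the observation that the defects of the mapping cones and cocones appearing in good lifts are again subquotients of defects already in $\mathcal{T}$.

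Mutual inversion is then essentially formal. We have $\Xi \Theta(\mathcal{T}) = \mathcal{T}$ since, by construction, every object of $\mathcal{T}$ arises as the defect of some conflation in $\mathbb{W}_\mathcal{T}$; and $\Theta \Xi(\mathbb{W}') = \mathbb{W}'$ because an extension $\delta \in \mathbb{W}(C, A)$ belongs to $\mathbb{W}'$ if and only if the defect of the conflation realizing it lies in $\defff \mathbb{W}'$. Both bijections are manifestly order-preserving, so they send meets (intersections on both sides) to meets. For joins, the join in $\mathrm{Top}(\defff \mathbb{W})$ is the smallest topologizing subcategory containing the union and the join in $\mathrm{Sub}(\mathbb{W})$ is the smallest sub-bifunctor containing the union; a bijection of posets automatically sends the smallest upper bound to the smallest upper bound, so joins also correspond.

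The hard part will be the well-definedness of $\Theta$, namely verifying that $\mathbb{W}_\mathcal{T}$ is closed under all operations required of a weakly extriangulated substructure. This is a careful diagram chase using the long exact sequences of $\Hom$ attached to morphisms of conflations together with the subquotient closure of $\mathcal{T}$, in the spirit of Enomoto's argument \cite{Enomoto3} in the extriangulated setting. Crucially, the absence of axiom (EA1) in our framework does not interfere: the classical argument uses only exactness of $\mathfrak{s}$ together with (EA2) and (EA2)$^{\mathrm{op}}$, all of which are available to us.
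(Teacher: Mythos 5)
Your strategy is exactly the one the paper follows: its entire proof is the single remark that Enomoto's proof of Theorem B in \cite{Enomoto3}, with Step 3 removed, applies word for word, and your two maps $\Xi\colon \mathbb{W}'\mapsto\defff\mathbb{W}'$ and $\Theta\colon\mathcal{T}\mapsto\mathbb{W}_{\mathcal T}$ are precisely the maps in that argument. Your analysis of $\Theta$ (defects of direct sums, pullbacks and pushouts being direct sums, subobjects and quotients of the original defect, hence closure of $\mathbb{W}_{\mathcal T}$ under the bifunctor action and Baer sums via the subquotient closure of $\mathcal T$) is correct; the cleanest way to see those three facts is to identify the defect of $\delta$ with the image of $\delta_{\sharp}\colon\Hom(-,C)\to\mathbb{W}(-,A)$, which uses only the exactness condition (R1) of the realization.

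The one place where you misjudge the difficulty is the claim that mutual inversion is ``essentially formal.'' The inclusion $\mathbb{W}'\subseteq\Theta\Xi(\mathbb{W}')$ is definitional, but the reverse inclusion is the heart of the theorem: you must show that if the defect $\widetilde\delta$ of $\delta\in\mathbb{W}(C,A)$ is isomorphic \emph{as a right $\ca$-module} to the defect of some conflation realizing $\rho\in\mathbb{W}'(C',A')$, then $\delta$ itself lies in $\mathbb{W}'(C,A)$. This does not follow from the constructions alone; it requires lifting the isomorphism $\widetilde\delta\cong\widetilde\rho$ along the projective presentations $\Hom(-,B)\to\Hom(-,C)\to\widetilde\delta\to 0$ to a pair of morphisms $(a,c)$ with $\delta=c^{*}a_{*}(\rho)$ --- an Auslander/Hilton--Rees type argument, which is the key lemma in Enomoto's proof and does go through in the weakly extriangulated setting because it uses only (R0) and (R1). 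Your sentence ``because an extension belongs to $\mathbb{W}'$ if and only if the defect of the conflation realizing it lies in $\defff\mathbb{W}'$'' merely restates the claim to be proved. With that lemma supplied, the rest of your outline, including the compatibility with meets and joins, is fine.
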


\begin{proof}
The proof of \cite[Theorem B]{Enomoto3}, with Step 3 removed, applies.
\end{proof}

\begin{cor}
Let $\cW$ be a weakly exact structure on $\ca$. Then there is a lattice isomorphism between the interval $[\cW^{\add}, \cW]$ in the lattice of weakly exact structures on $\ca$ and the lattice of topologizing subcategories of $\defff \mathbb{W}.$ 
\end{cor}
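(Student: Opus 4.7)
The plan is to combine the preceding proposition with the bifunctor$-$to$-$structure correspondence developed earlier. First, by the lemma establishing that a weakly exact structure $\cW$ on $\ca$ defines a weakly extriangulated structure $(\ca, \mathbb{W}, \mathfrak{s})$, we may apply the preceding proposition to obtain a lattice isomorphism between the lattice of additive sub-bifunctors of $\mathbb{W}$ and the lattice of topologizing subcategories of $\defff \mathbb{W}.$ It therefore suffices to identify the interval $[\cW^{\add}, \cW]$ in $\Wex(\ca)$ with the lattice of additive sub-bifunctors of $\mathbb{W}.$

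For this identification I would mimic the construction of Section \ref{section:sub-bifunctors to weakly}, but relative to $\cW$ rather than $\ce_{\max}$. Given an additive sub-bifunctor $\mathbb{W}' \leq \mathbb{W}$, one sets $\cW' := \W_{\mathbb{W}'}$ to be the class of short exact sequences whose class in $\mathbb{W}$ lies in the subgroup $\mathbb{W}'(C,A)$. The key observation is that the conflations of $\mathbb{W}$ are genuine kernel$-$cokernel pairs (because $\cW$ is weakly exact, not merely weakly extriangulated), hence the sequences in $\cW'$ are also kernel$-$cokernel pairs. Closure under isomorphisms, the axioms $(E0)$ and $(E0)^{op}$, and closure under direct sums follow from $\mathbb{W}'$ being an additive subfunctor containing the zero extensions, exactly as in Lemma \ref{lemma:sub-bifunctor} and Proposition \ref{prop:bijections}. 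The pushout/pullback axioms $(E2)$ and $(E2)^{op}$ hold because they hold for $\cW$ and because $\mathbb{W}'$ is stable under the induced morphisms $a_{\ast}$ and $c^{\ast}$. Hence $\cW'$ is a weakly exact substructure of $\cW$, and clearly $\cW'$ lies in the interval $[\cW^{\add},\cW]$ (here $\cW^{\add}$ denotes the minimum, namely the split sequences).

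Conversely, every weakly exact $\cW' \subseteq \cW$ yields, by Definition \ref{definition:bifunctor}, an additive sub-bifunctor $\mathbb{W}'$ of $\mathbb{W}$; by Lemma \ref{lemma:sub-bifunctor} this construction is a morphism of posets. One verifies as in Proposition \ref{prop:bijections} that the two constructions are mutually inverse and that both preserve meets (given by pointwise intersection) and joins (given by the Baer sum of subgroups, as in Proposition \ref{prop:lattice-of-weakly-exact}). This produces a lattice isomorphism between $[\cW^{\add},\cW]$ and the lattice of additive sub-bifunctors of $\mathbb{W}$, which composed with the isomorphism from the previous proposition gives the claim.

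The main obstacle I anticipate is purely bookkeeping: verifying that the correspondence between substructures and sub-bifunctors of $\mathbb{W}$ literally restricts from the already-established correspondence (rather than having to be reproved from scratch). Concretely, one must confirm that a sub-bifunctor of $\mathbb{W}$ automatically defines conflations whose underlying pairs are still kernel$-$cokernel pairs (inherited from $\cW$) and not merely weak kernel$-$cokernel pairs, so that the resulting structure falls into $\Wex(\ca)$ rather than only into the larger class of weakly extriangulated substructures. Everything else is a routine transcription of earlier arguments.
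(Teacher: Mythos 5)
Your proposal is correct and follows exactly the route the paper intends: the corollary is stated without proof as an immediate consequence of the preceding proposition (sub-bifunctors of $\mathbb{W}$ correspond to topologizing subcategories of $\defff\mathbb{W}$) combined with the $\Phi/\Psi$ correspondence of Proposition \ref{prop:bijections} relativized from $\ce_{\max}$ to $\cW$, with $\cW^{\add}$ being the split structure. Your added care about conflations of a sub-bifunctor remaining genuine kernel--cokernel pairs (so that the substructures land in $\Wex(\ca)$ and not merely among weakly extriangulated substructures) is precisely the right point to check and is correctly resolved.
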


\begin{cor}\label{iso4}
When the category $\ca$ admits a unique maximal weakly exact structure $\cW^{\max},$ the lattice of weakly exact structures on $\ca$ is isomorphic to the lattice of topologizing subcategories of $\defff \mathbb{W}^{\max}.$ 
\end{cor}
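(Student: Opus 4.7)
The plan is to derive Corollary \ref{iso4} as an immediate specialization of the preceding corollary, applied to the choice $\cW = \cW^{\max}$. That corollary supplies a lattice isomorphism between the interval $[\cW^{\add}, \cW^{\max}]$ inside $\Wex(\ca)$ and $\mathbf{Top}(\defff \mathbb{W}^{\max})$, so the entire task reduces to identifying this interval with the whole lattice $\Wex(\ca)$.

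For the upper endpoint, I would first observe that the hypothesis ``$\cW^{\max}$ is the unique maximal weakly exact structure on $\ca$'' forces $\cW^{\max}$ to be the top element of $\Wex(\ca)$. Indeed, if there existed a weakly exact structure $\cW' \in \Wex(\ca)$ with $\cW' \not\subseteq \cW^{\max}$, then the Baer-sum construction $\cW' + \cW^{\max}$ of Proposition \ref{prop:lattice-of-weakly-exact} (whose proof only requires a common ambient bifunctor into which both structures embed additively, a condition automatic here with ambient $\mathbb{W}^{\max}$ after possibly enlarging) would produce a weakly exact structure strictly larger than $\cW^{\max}$, contradicting maximality. Hence every weakly exact structure on $\ca$ is contained in $\cW^{\max}$. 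For the lower endpoint, Lemma \ref{lemma:min-exact} identifies the split exact structure $\E_{\min}$ as the minimum of $\Wex(\ca)$, and this coincides with $\cW^{\add}$. Consequently $[\cW^{\add}, \cW^{\max}] = \Wex(\ca)$, and the preceding corollary yields the claimed isomorphism.

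I expect no serious obstacle: the argument is essentially bookkeeping. The only delicate point is the logical gap between ``unique maximal'' and ``maximum'', and the need to ensure that Proposition \ref{prop:lattice-of-weakly-exact} can be invoked in an ambient setting not a priori known to be $\E_{\max}$. Both issues dissolve once one notes that the Baer sum of two weakly exact structures can always be formed as soon as there is a common additive bifunctor accommodating the sum of classes, a condition that the existence of $\cW^{\max}$ (together with monotonicity of the map $\Phi$ from Section \ref{section:weakly to bifunctors}) guarantees.
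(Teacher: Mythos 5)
Your main line of argument is exactly the paper's (implicit) one: the corollary is stated without proof as the specialization of the preceding corollary to $\cW=\cW^{\max}$, with the interval $[\cW^{\add},\cW^{\max}]$ identified with all of $\Wex(\ca)$ because $\cW^{\add}=\E_{\min}$ is the minimum (Lemma \ref{lemma:min-exact}) and $\cW^{\max}$ contains everything. So the proposal is correct in substance and follows the same route.

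One caveat on your digression about ``unique maximal'' versus ``maximum'': the Baer-sum repair you sketch does not actually work as stated. Proposition \ref{prop:lattice-of-weakly-exact} is proved only for two weakly exact structures \emph{contained in} $\E_{\max}$, and this restriction is not cosmetic: to form $\eta_1+\eta_2=\nabla_A(\eta_1\oplus\eta_2)\Delta_C$ one needs the pullback and pushout of the direct sum $\eta_1\oplus\eta_2$ to exist and yield a kernel--cokernel pair, and when $\eta_1\in\cW'$, $\eta_2\in\cW^{\max}$ with $\cW'\not\subseteq\cW^{\max}$ the sequence $\eta_1\oplus\eta_2$ lies in neither structure, so there is no axiom guaranteeing this (indeed Rump's example shows semi-stability can fail under direct sums). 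There is no ``common ambient bifunctor'' available a priori in that situation, so you cannot conclude that $\cW'+\cW^{\max}$ is a weakly exact structure. Fortunately the point is moot: throughout the paper ``unique maximal weakly exact structure'' is used to mean the greatest element of $\Wex(\ca)$ (as in Section \ref{section:maximal weakly exact}, where $\W_{\max}$ is characterized as the largest class satisfying the axioms), so the hypothesis already gives you $\cW\subseteq\cW^{\max}$ for every $\cW\in\Wex(\ca)$ and the interval identification is immediate.
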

In particular we get the following summarising result:
\begin{cor}\label{all in one}Let $\A$ be weakly idempotent complete essentially small additive category, then the following three lattices are isomorphic:
 \[\Wex(\A) \overset\sim\to \Bf(\A) \overset\sim\to \defff \mathbf{\mathcal{E}}^{\max}.\]
 If $\A$ is moreover additively finite, Hom-finite, and Krull-Schmidt, they are also isomorphic to the lattice $\Bim(B).$
\end{cor}

Note that when $\ca$ is idempotent complete, we can use arguments from \cite{En18, Enomoto2, FG}  instead. In particular, this approach would give another proof of the existence of $\cW^{\max}$ in this generality.

\end{document}